\documentclass[12pt]{amsart}
\usepackage[top=1in, bottom=1in, left=1in, right=1in]{geometry}
\usepackage{amsfonts}
\usepackage{amsmath}
\usepackage{comment}
\usepackage{amssymb}
\usepackage{graphicx}
\usepackage{bbm}
\usepackage{comment}
\usepackage{mathrsfs}
\numberwithin{equation}{section}
\usepackage{times}

\usepackage{siunitx}
\usepackage[usenames,dvipsnames]{color}
\usepackage{comment}
\usepackage{xcolor}
\usepackage{mathtools}
\usepackage{bm}
\usepackage{esvect}
\usepackage{hyperref}

\hypersetup{
    colorlinks = true,
linkcolor={black},
urlcolor={blue},
citecolor={blue},    
urlcolor = {blue},
citebordercolor = {0.33 .58 0.33},
 linkbordercolor = {0.99 .28 0.23},
 breaklinks=true}
 
\newcommand{\R}{\mathbb{R}}

\newcommand{\N}{\mathbb{N}}
\newcommand{\Z}{\mathbb{Z}}

\newcommand{\fanm}{\mathscr{M}}
\newcommand{\bk}{\mathbf{k}}
\newtheorem{thm}{Theorem}[section]

\newtheorem{coro}[thm]{Corollary}
\newtheorem{lem}[thm]{Lemma}
\newtheorem{rem}[thm]{Remark}

\theoremstyle{remark}

\usepackage{geometry}
\title[Generalization on Mundici's Conjecture]{Effective Estimates for a Class of Farey Fraction Sums\\ and Bounds for Mundici-Type Constants}

\author{Anji Dong, Huy Xuan Nguyen, Vi Anh Nguyen, Alexandru Zaharescu}

\address{
Anji Dong: Department of Mathematics,
University of Illinois Urbana-Champaign,
Altgeld Hall, 1409 W. Green Street,
Urbana, IL, 61801, USA}
\email{anjid2@illinois.edu}

\address{Huy Xuan Nguyen: Department of Mathematics,
University of Illinois Urbana-Champaign,
Altgeld Hall, 1409 W. Green Street,
Urbana, IL, 61801, USA}
\email{huyn4@illinois.edu}

\address{
Vi Anh Nguyen: Department of Mathematics,
University of Illinois Urbana-Champaign,
Altgeld Hall, 1409 W. Green Street,
Urbana, IL, 61801, USA}
\email{vianhan2@illinois.edu}
\address{
Alexandru Zaharescu: Department of Mathematics,
University of Illinois Urbana-Champaign,
Altgeld Hall, 1409 W. Green Street,
Urbana, IL, 61801, USA}
\email{zaharesc@illinois.edu}  
 
\begin{document}
\setcounter{tocdepth}{1}
\keywords{Farey sequences, asymptotic formulas, Kloosterman sums, BCZ-map.}
\subjclass{Primary:11B57. Secondary:11N37, 11N56.}
\begin{abstract}
  Let $D_{2}(Q)$ denote the sum of squared distances between consecutive Farey fractions in the full interval $(0, 1]$. Daniele Mundici conjectured that $C(Q):=D_{2}(Q)\cdot Q^2/\log Q$ is less than 3 for all $Q\geq 2$, which is confirmed true in \cite{ViAnji}. In this paper, we generalize this result to subintervals of $(0, 1]$ and to $h$-spacings. As applications, we obtain Mundici-type bounds in these two settings, extending the full-interval consecutive-spacing case of Mundici's conjecture.
\end{abstract}
\maketitle

\section{Introduction}
Let $Q\geq 2$ be an integer. The $Q$-th Farey sequence $F_Q$ is defined as follows:
\begin{align*}
    F_Q :=\left\{\frac{a}{q}: 1\leq a\leq q\leq Q,\ (a,q)=1\right\}.
\end{align*}

Write $|F_Q|=N(Q)$, and enumerate
\begin{align*}
   F_Q :=\{\gamma_1,\gamma_2,\dots,\gamma_{N(Q)}\} 
\end{align*}
with $\gamma_1<\gamma_2<\dots<\gamma_{N(Q)}$. For simplicity, we write $N=N(Q)$, while keeping the dependence of $Q$ in mind. Moreover, we set $\gamma_0=0$. Whenever an index $j$ exceeds $N$, we use the periodic extension
\[
\gamma_j = \gamma_{j \bmod N}+\left\lfloor \frac{j}{N}\right\rfloor.
\] 
In particular, $\gamma_{j+N} = \gamma_j+1$. For general facts about Farey fractions, the reader may refer to Chapter 3 of Hardy and Wright's book \cite{Hardy} and the survey \cite{CoZa} by Cobeli and one of the authors. 

In \cite{ViAnji}, Li and two of the present authors study the distribution of spacings between consecutive Farey fractions and obtain an effective/explicit formula for $D_{2,1}(Q)$, which is the sum of squared distances between consecutive Farey fractions in the full interval $(0, 1]$. This formula is then used in \cite{ViAnji} to prove Mundici's conjectural bound for the normalized quantity
\begin{align}
    C(Q) = \frac{D_{2,1}(Q) \cdot Q^2}{\log Q},
\end{align}
which states that 
\begin{align}
C(Q)<3 \text{ for all $Q\geq 2$.}
\end{align}
We refer to this normalized quantity, and to its analogues below, as \textit{Mundici-type constants}. 
At the end of \cite{ViAnji}, the authors raise some open problems concerning two further directions: $h$-th level
consecutive spacings for $h\geq 2$, previously considered by Augustin, Boca, Cobeli, and one of the authors \cite{boca, BoCoZaha2001}, and localized sums over subintervals $I$ of $(0,1]$, a direction previously studied in \cite{BoCoZaha2001}. 

Our main goal in this paper is to tackle these open problems. These two directions introduce additional complications beyond those appearing in the full-interval consecutive-spacing problem. In the short-interval setting, we need to provide and use effective estimates for Kloosterman sums. Moreover, as established in \cite{BoCoZaha2001}, a subinterval $I$ comes with an associated quantity~$c_I$, called the \textit{defect}; whereas for the full interval $(0,1]$, this defect vanishes. On the other hand, the $h$-spacing problem involves longer chains of consecutive Farey fractions as opposed to the original problem's consecutive pairs. These chains are intrinsically related to the behavior of the so-called BCZ-map introduced by Boca, Cobeli, and one of the authors \cite{BoCoZaha2000}. For further work on this map, see, for example, Athreya and Cheung \cite{ach}. 



We first consider the $h$-spacing problem. For $h\geq 1$, define 
\begin{align}
    D_{2,h}(Q):= \sum_{j=1}^{N} (\gamma_{j+h}-\gamma_j)^2.
\end{align}
The corresponding Mundici-type constant is
\begin{align}
    C_h(Q) := \frac{D_{2,h}(Q)Q^2}{\log Q}.\label{def:Ch(Q)}
\end{align}
Our first result gives a uniform bound for $C_h(Q)$.

\begin{thm}\label{thm: Mundici conjecture}
For any integer $h\geq 1$, we have
   \begin{align}
         C_h(Q)\leq\frac{2h^2}{\log 2} \text{\quad for all $Q\geq 2$}.\label{eq: bound of Ch(Q)}
   \end{align} 
  Moreover, such an upper bound is always attained: for any $h\geq 1$, we have $C_h(2)=2h^2/\log 2$.
\end{thm}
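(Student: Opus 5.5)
The plan is to reduce the $h$-spacing sum to the consecutive-spacing sum $D_{2,1}(Q)$ by the Cauchy--Schwarz inequality, and then to sharpen the bound $C(Q)<3$ of \cite{ViAnji} to $C_1(Q)\le 2/\log 2$.

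\textbf{Reduction to $h=1$.} Write $\delta_i=\gamma_{i}-\gamma_{i-1}$ for the consecutive gaps. Because of the periodic extension $\gamma_{j+N}=\gamma_j+1$, these satisfy $\delta_{i+N}=\delta_i$. In particular $\delta_1=\gamma_1-\gamma_0=1/Q$ agrees with the wrap-around gap $\gamma_{N+1}-\gamma_N=1/Q$. Then $\gamma_{j+h}-\gamma_j=\sum_{i=j+1}^{j+h}\delta_i$, and Cauchy--Schwarz gives
\[
(\gamma_{j+h}-\gamma_j)^2\le h\sum_{i=j+1}^{j+h}\delta_i^2 .
\]
Summing over $j=1,\dots,N$, each residue class of $i$ modulo $N$ occurs exactly $h$ times. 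Hence $D_{2,h}(Q)\le h^2 D_{2,1}(Q)$, that is, $C_h(Q)\le h^2 C_1(Q)$. So it suffices to prove
\[
C_1(Q)\le \frac{2}{\log 2}\quad\text{for all } Q\ge 2 .
\]
This is stronger than Mundici's bound $3$, since $2/\log 2\approx 2.885$.

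\textbf{The case $h=1$.} I will use the effective formula for $D_{2,1}(Q)$ from \cite{ViAnji}. It has main term $\frac{12}{\pi^2}\frac{\log Q}{Q^2}$ plus lower-order terms with explicit constants, so $C_1(Q)\to 12/\pi^2\approx 1.216$. From the explicit error terms I will determine an explicit $Q_0$ such that $C_1(Q)<2/\log 2$ for all $Q\ge Q_0$. The remaining range $3\le Q<Q_0$ is handled by direct computation of $D_{2,1}(Q)$. This is easy using the fact that consecutive Farey fractions $a/q<a'/q'$ have gap $1/(qq')$, so the computation only needs the denominators of consecutive terms. For example, $C_1(3)=\frac{9}{\log 3}\left(\frac{2}{9}+\frac{2}{36}\right)=\frac{5}{2\log 3}\approx 2.28$.

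\textbf{The main obstacle.} I expect this to be the effective step: the error constants in \cite{ViAnji} were tuned to reach $3$. With the margin reduced to $2/\log 2$, $Q_0$ may be large, and I may need to tighten some of the explicit estimates or enlarge the numerical check. Since $C_1(3)$ is already far below $2/\log 2$ and the limit is about $1.22$, there is ample room except at the very smallest $Q$.

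\textbf{Equality at $Q=2$.} Here $F_2=\{1/2,1\}$, $N=2$, and every gap $\delta_i$ equals $1/2$. So $\gamma_{j+h}-\gamma_j=h/2$ for every $j$, whatever the parity of $h$. Equality therefore holds in Cauchy--Schwarz, and
\[
D_{2,h}(2)=2\cdot\frac{h^2}{4}=\frac{h^2}{2},\qquad C_h(2)=\frac{4\cdot h^2/2}{\log 2}=\frac{2h^2}{\log 2}.
\]
This shows the bound \eqref{eq: bound of Ch(Q)} is attained.
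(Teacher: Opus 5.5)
Your argument is correct and is essentially the paper's proof: Cauchy--Schwarz applied to the $h$ consecutive gaps gives $C_h(Q)\le h^2C_1(Q)$, and at $Q=2$ every gap equals $1/2$, so equality holds and $C_h(2)=2h^2/\log 2$. The one difference is that the paper does not re-derive $C_1(Q)\le 2/\log 2$ but cites it from the numerical computation in \cite[Theorem 1.3]{ViAnji}; your plan of an effective formula plus a finite check is sound for that step, and your worry about the margin is unfounded, because the bound on $E_1$ quoted in the paper already forces $C_1(Q)<2/\log 2$ once $Q$ exceeds a few hundred.
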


 Notice that when $h$ grows, the bound in \eqref{eq: bound of Ch(Q)} is quadratic in $h$. However, such a bound can be significantly improved when $Q$ is large enough. With some additional work, our next result gives a linear bound in $h$ with an explicit threshold.

 \begin{thm}\label{thm: Mundici conjecture for large Q}
     For any integer $h\geq 1$, there exists an effectively computable integer $Q_h$ such that
   \begin{align}
         C_h(Q)< 3h\text{\quad for all $Q\geq Q_h$}.\label{eq:Ch(Q)<3h}
   \end{align} 
   Moreover, one can take $Q_1=2$, $Q_2 = 19397$, and for $h\geq 3$, 
   \begin{align}
Q_h = \exp\left\{\max\left\{
\frac{2D(h)}{\Delta_h},(h+1)W\left(\frac{1}{h+1}\left(\frac{h\cdot2^{3h+9}}{\Delta_h}\right)^{\frac{h+2}{h+1}}\right)\right\}\right\},
\end{align}
where the constant $D(h)$ is defined in \eqref{def:D(h)} below, $W(x)$ is the Lambert $W$ function (see \cite{Lambert}) defined as the function satisfying 
\begin{align}
    W(x)e^{W(x)} = x,
\end{align}
 and
\begin{align}
\Delta_h = 3h-\frac{12(2h-1)}{\pi^2}.
\end{align}
 \end{thm}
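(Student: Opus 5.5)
The plan is to derive an effective asymptotic formula of the shape
\[
D_{2,h}(Q)\,Q^2=\frac{12(2h-1)}{\pi^2}\log Q+E_h(Q),
\]
with an explicit bound on the error term, and then to choose $Q$ large enough that $E_h(Q)$ is absorbed by the gap $\Delta_h\log Q$. For $h=1$ the statement is the result of \cite{ViAnji}, so take $Q_1=2$. For $h=2$ I would use the same effective formula down to a moderate threshold and finish by checking $C_2(Q)<6$ numerically for the finitely many remaining $Q$, which produces the value $Q_2=19397$.

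To obtain the formula I would parametrize $h$-chains of consecutive Farey fractions via the BCZ map. Consecutive denominators $(q_0,q_1)$ are exactly the primitive lattice points of the Farey triangle $\{q,q'\le Q,\ q+q'>Q\}$. The subsequent denominators are $q_{i+1}=\kappa_i q_i-q_{i-1}$ with $\kappa_i=\lfloor (Q+q_{i-1})/q_i\rfloor$, and
\[
\gamma_{j+h}-\gamma_j=\sum_{i=0}^{h-1}\frac{1}{q_iq_{i+1}}.
\]
Squaring gives $h$ diagonal sums $\sum 1/(q_iq_{i+1})^2$, each equal to $D_{2,1}(Q)$ by shift invariance of the BCZ map, hence each contributing $\frac{12}{\pi^2}\log Q$. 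It also gives $h(h-1)$ off-diagonal sums $\sum 1/(q_iq_{i+1}q_lq_{l+1})$.

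The key structural point is how these off-diagonal sums behave. A logarithm arises only when some denominator in the chain is small, that is, when the index $\kappa$ is large. This can happen for at most one step in a short window. Consequently the off-diagonal terms with $|i-l|=1$ contribute in total $(h-1)\cdot\frac{12}{\pi^2}\log Q$, while those with $|i-l|\ge 2$ contribute only $O_h(1)$. Adding up gives the main coefficient $(2h-1)\cdot 12/\pi^2$, which is strictly less than $3h$; the difference is exactly $\Delta_h$.

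For each contribution I would decompose the triangle into the BCZ regions $\mathcal T_{\kappa_1,\dots,\kappa_h}$ and replace sums over primitive lattice points by integrals. This uses Möbius inversion together with explicit lattice-point counts, whose error terms are of size $O(Q\log Q)$ per region. The number of index patterns that must be tracked grows exponentially in $h$, which is the source of the constant $h\,2^{3h+9}$. The bounded parts of the integrals are collected into the explicit constant $D(h)$. The remaining error takes the form $h\,2^{3h+9}(\log Q)^{h+2}$ divided by a suitable power of $Q$; this is the structure suggested by the Lambert $W$ threshold.

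The conclusion then comes from splitting the gap in half:
\[
D(h)\le \tfrac12\Delta_h\log Q \quad\text{and}\quad (\text{remaining error})\le \tfrac12\Delta_h\log Q .
\]
The first inequality is equivalent to $\log Q\ge 2D(h)/\Delta_h$. The second, after writing $L=\log Q$, rearranges to $L\,e^{L/(h+1)}\ge (h2^{3h+9}/\Delta_h)^{(h+2)/(h+1)}$, which is exactly the Lambert $W$ condition in the definition of $Q_h$.

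I expect the main obstacle to be making the off-diagonal analysis fully explicit and uniform in $h$. This means showing that every configuration of large indices $\kappa_i$ yields either the stated logarithmic term or a bounded term, with constants under control. What I would try first is to bound the number of admissible index tuples by $2^{O(h)}$, using the known constraint that large indices are isolated and followed by $\kappa=2$ runs. I would then treat each region's lattice-point error uniformly via its area and perimeter.
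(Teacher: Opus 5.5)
Your outline matches the paper's structure. You expand $D_{2,h}(Q)=hS_0(Q)+2\sum_{k=1}^{h-1}(h-k)S_k(Q)$, observe that only $S_0$ and $S_1$ carry $\log Q$ (giving $12(2h-1)/\pi^2$), and halve the gap $\Delta_h$. But the explicit $Q_h$ is dictated entirely by the shape of the error term, and there your proposal is both incomplete and internally inconsistent. The stated threshold is exactly what an error in $Q^2D_{2,h}(Q)$ of size $h\,2^{3h+8}(\log Q)^{1/(h+2)}Q^{-1/(h+2)}$ produces. This is the paper's effective asymptotic for $D_{2,h}$, valid for $Q\ge\max\{6163,2^{(h+2)^2}\}$, and one must also check that $Q_h$ lies in this range. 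Divide by $\log Q$, require the result to be $<\Delta_h/2$, and raise to the power $(h+2)/(h+1)$; this gives $Q^{1/(h+1)}\log Q>(h2^{3h+9}/\Delta_h)^{(h+2)/(h+1)}$. The error you posit, $h2^{3h+9}(\log Q)^{h+2}Q^{-\theta}$, instead leads to a condition $Q^{\theta}\ge c(\log Q)^{h+1}$. That does not rearrange to $Le^{L/(h+1)}\ge\cdots$ (it is governed by the branch $W_{-1}$), so your last step is false as written. Similarly, $19397$ is not the output of a numerical check of $C_2(Q)$. It is the first $Q\ge 6163$ at which the explicit majorant $G_2(Q)$, built from the effective error terms for $S_0$ and $S_1$, drops below $6$. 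Since $C_2(Q)=36/\pi^2+D(2)/\log Q+o(1)$ with $D(2)\approx 8.5$, this number reflects the error constants, not $C_2$ itself.

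The missing idea is the mechanism that produces a power saving. Your suggestion of bounding the number of admissible index tuples by $2^{O(h)}$ cannot work. A small denominator $q$ forces an index of size $\asymp Q/q$, so the number of tuples is unbounded as $Q\to\infty$. Also, $f_{r,\mathbf{k}}$ blows up near the edges of the regions, so per-region area/perimeter errors cannot simply be summed. The paper truncates twice: it keeps $k_i\le T$ and restricts to $\Omega_{\mathbf{k},M}$, where $x,y,L_{\mathbf{k},r},L_{\mathbf{k},r+1}\ge Q/M$. The discarded parts cost $\ll 2^{3r+8}(1/M+r/T)Q^{-2}$, by the Boca--Cobeli--Zaharescu lemma that at least three of $q_j,q_{j+1},q_{j+r},q_{j+r+1}$ are $\ge 2^{-r-2}Q$. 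This lemma, not a count of index patterns, is where the factor $2^{3r}$ comes from. On the truncated regions the dominant lattice-point error is $\ll 2^{3r+6}T^rM^2Q^{-3}\log Q$, because $|\nabla f_{r,\mathbf{k}}|\ll 2^{3r+6}k_1\cdots k_rM^2Q^{-5}$. Taking $M=T=\lceil(Q/\log Q)^{1/(r+3)}\rceil$ balances the two and gives an error $\ll 2^{3r+8}r\,(\log Q/Q)^{1/(r+3)}$ in $Q^2S_r(Q)$. With $r=h-1$, this is the source of the exponents $h+2$. Finally, this scheme only handles $r\ge 2$. For $r=1$ the integral $I_1$ diverges, which is exactly where the second logarithm comes from. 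The paper instead makes Hall's treatment of $S_1$ effective via $S_1(Q)=\sum_{s}s^{-2}\sum_{n}(nt)^{-1}$, which is also the source of the constant $B$ in $D(h)$.
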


\begin{rem}
    We remark that the value $Q_2 = 19397$ in Theorem~\ref{thm: Mundici conjecture for large Q} is the best possible in the sense that the inequality in \eqref{eq:Ch(Q)<3h} fails if one takes $Q_2=19396$.
\end{rem}
 We next turn to the short-interval problem over a subinterval $I$ of $(0,1]$. In this setting, the relevant quantity is the contribution of the squared-spacing sum from Farey fractions in the interval $I$. Denote
\begin{align}\label{def:S0(Q,I)}
    S_0(Q, I) := \sum_{\gamma_j \in F_I(Q)} (\gamma_{j+1} - \gamma_j)^2,
\end{align}
where
\[
F_I(Q) = \left\{ \dfrac{a}{q}  \in I, 1 \leq a \leq q \leq Q, (a, q) = 1 \right\}.
\]
Define
\begin{align}
C_0(Q,I):=\frac{S_0(Q,I)Q^2}{|I|\log Q}.\label{def:C0(Q,I)}
\end{align}
Our next theorem gives an explicit Mundici-type bound after normalizing by $|I|$.

\begin{thm}\label{thm: Mundici Conjecture Short}
  For any integer \(Q\geq \exp\{{e^{22}}\}\) and any subinterval \(I=(\alpha,\beta]\subseteq (0,1]\),
 we have 
  \begin{align}
C_0(Q,I)&<\frac{12}{\pi^2} + \frac{2.01} {\log Q}+\frac{7.06}{|I|\log Q},
\end{align}
where $C_0(Q,I)$ is defined in \eqref{def:C0(Q,I)}. If we further assume \mbox{that $(\beta-\alpha)\geq 4/\log Q$,} then 
\begin{align}
C_0(Q,I)&<3.
\end{align}  
\end{thm}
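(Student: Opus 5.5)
We plan to reduce $S_0(Q,I)$ to a lattice-point count weighted by $1/(qq')^2$. Consecutive Farey fractions $a/q<a'/q'$ in $F_Q$ satisfy $\gamma_{j+1}-\gamma_j = 1/(qq')$. The pairs of consecutive denominators $(q,q')$ are exactly the coprime pairs with $1\le q,q'\le Q$ and $q+q'>Q$. Given such a pair, the left endpoint $a/q$ is determined by $a\equiv -\bar{q'} \pmod q$, where $\bar{q'}$ is the inverse of $q'$ modulo $q$. Hence
\begin{align*}
S_0(Q,I)=\sum_{\substack{1\le q,q'\le Q,\ q+q'>Q\\ (q,q')=1}} \frac{1}{q^2q'^2}\,\mathbbm{1}\!\left[\frac{a(q,q')}{q}\in I\right].
\end{align*}
For each fixed $q$, we count the residues $q'$ in the window $(Q-q,Q]$ with $(q',q)=1$ and $-\bar{q'}/q\in(\alpha,\beta]$. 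The main term is the expected count $|I|\,\varphi(q)/q\cdot(\text{length})$. The error comes from the usual Fourier/Erd\H{o}s--Tur\'an expansion of the indicator of $I$ combined with the indicator of the $q'$-window. That expansion produces incomplete Kloosterman sums, which we bound effectively by Weil's estimate $|K(m,n;q)|\le \tau(q)(m,n,q)^{1/2}q^{1/2}$, with an explicit divisor bound such as $\tau(q)\le q^{1.5379\log 2/\log\log q}$ valid for $q\ge3$. This yields an error of size $O(q^{1/2+\epsilon}\log^2 q)$ for each $q$, with explicit constants.

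For the main term, summing $|I|\sum_{q,q'}\frac{1}{q^2q'^2}$ over the coprime pairs with $q+q'>Q$ reproduces $|I|$ times the full-interval quantity $D_{2,1}(Q)$. We then invoke the effective formula of \cite{ViAnji}, whose leading term is $\frac{12}{\pi^2}\frac{\log Q}{Q^2}$ and whose secondary term is explicit. This produces $\frac{12}{\pi^2}+\frac{c_1}{\log Q}$ after normalizing, with $c_1\le 2.01$ in the regime $Q\ge\exp\{e^{22}\}$. The pairs near the boundary of $I$ contribute only $O(1/Q^2)$ each. The error contribution, however, is not proportional to $|I|$, which is why it produces the term $7.06/(|I|\log Q)$.

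We organize the error by splitting on size. For small $q$ (or small $q'$), say $q\le Q^{1-\delta}$, the weights $1/(q^2q'^2)$ with $q'\approx Q$ are tiny. There we use the trivial bound: at most $|I|q+1$ choices for each $q$. These ranges produce $\sum_q \frac{1}{q^2Q^2}\ll \frac{1}{Q^2}$, and the constant from this trivial part (including $\zeta(2)$-type sums over both orders $q<q'$ and $q'<q$) is what feeds the $7.06/(|I|\log Q)$ term. For $q,q'$ both of size comparable to $Q$, the Kloosterman error totals $\sum_{q\le Q} q^{1/2+\epsilon}\log^2 q/(q^2 Q^2)$. After multiplying by $Q^2/\log Q$ this is $o(1/\log Q)$, and the huge threshold $\exp\{e^{22}\}$ is chosen so that the explicit divisor bound makes it numerically negligible.

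The main obstacle is the bookkeeping of explicit constants in the Kloosterman step: the divisor-function bound forces the threshold $Q\ge \exp\{e^{22}\}$. The final claim then follows directly. If $|I|\ge 4/\log Q$, then
\begin{align*}
\frac{7.06}{|I|\log Q}\le 1.765,
\end{align*}
so $C_0<1.2159+1.765+2.01/e^{22}<3$.
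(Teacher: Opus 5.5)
You have the paper's architecture right. You write $S_0(Q,I)=|I|S_0(Q)+\text{(error)}$ and insert the explicit full-interval formula, whose secondary constant $\frac{6(2\gamma+1)}{\pi^2}-\frac{2\zeta'(2)}{\zeta(2)^2}\approx 2.003$ gives the $2.01$. You handle the range where both denominators are at least $Q/T$ (the paper takes $T=Q^{c}$, $c=1/10$) by incomplete Kloosterman sums with an explicit divisor bound, and you finish with the same arithmetic.

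\textbf{The gap.} It lies in the range where one denominator is below $Q/T$, and that range is not negligible. For fixed $q$ the window $(Q-q,Q]$ is a complete residue system mod $q$. So the number of admissible $q'$ is exactly $\#\{a\in qI:(a,q)=1\}\approx|I|\varphi(q)$, each with weight about $1/(q^2Q^2)$ when $q$ is small. The two orders together therefore contribute about $\frac{2|I|}{Q^2}\sum_{q\le Q/T}\frac{\varphi(q)}{q^2}\approx\frac{12}{\pi^2}(1-c)|I|\frac{\log Q}{Q^2}$. This is most of the leading term; the $\log Q$ comes precisely from the corners of the Farey triangle. Your trivial bound of $|I|q+1$ choices per $q$ thus gives an error of order $|I|\log Q/Q^2$, not $\sum_q 1/(q^2Q^2)$. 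To get an $O(Q^{-2})$ error you must keep $|I|\varphi(q)$ in the main term and bound the discrepancy $\#\{a\in qI:(a,q)=1\}-|I|\varphi(q)$. That discrepancy is \emph{not} $O(1)$. M\"obius inversion gives only $2^{\omega(q)}\le\sigma_0(q)$, and this is essentially sharp. For $I=(0,1/4]$ and squarefree $q$ whose prime factors are all $\equiv 3\pmod{4}$, the discrepancy equals $2^{\omega(q)-2}$ in absolute value.

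\textbf{Where the $7.06$ comes from.} The discrepancy above is exactly where the defect $c_I$ enters. The paper bounds its total contribution by $\frac{2}{Q^2}\sum_q\frac{\sigma_0(q)}{q^2}\le\frac{2\zeta(2)^2}{Q^2}=\frac{\pi^4}{18Q^2}$. The case of a small right-hand denominator $q'$ adds $\frac{\pi^2}{6Q^2}$. There, $a/q\in I$ amounts to $a'/q'$ lying in $I$ moved by at most $2/Q$, which allows at most one extra numerator per $q'$. The sum $\pi^4/18+\pi^2/6\approx 7.0565$ is the $7.06$. Your ``$\zeta(2)$-type sums'' capture only the second effect, so the constant you assert is not justified by your argument. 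If the argument is read literally, it rests on a false $O(1)$ discrepancy bound.

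\textbf{A smaller slip.} The Kloosterman total you write, $\sum_{q\le Q}q^{1/2+\epsilon}\log^2q/(q^2Q^2)$, is of order $Q^{-2}$ because it is dominated by small $q$. It is therefore not $o(1/\log Q)$ after normalization. Restricted to $q,q'\ge Q/T$, the weights are at most $T^4/Q^4$ and the total is $\ll T^4Q^{-5/2+\epsilon}\log^2 Q$. This is $o(Q^{-2})$ exactly when $\epsilon<1/10$, i.e.\ when $2.1322/(\log\log Q-0.1054)<1/10$. That interplay between $c$ and the divisor exponent is the actual source of the threshold $\exp\{e^{22}\}$.
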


\begin{rem}
    An upper bound of $C_0(Q,I)$ which holds true for any $Q\geq 2$ is 
    \begin{align}
    C_0(Q,I)\leq \frac{2}{|I|\log 2}.
    \end{align}
    Such a bound follows easily from Theorem~\ref{thm: Mundici conjecture} (which we will show later), but it blows up \mbox{when $|I|$} tends to $0$. However, the bound in Theorem \ref{thm: Mundici Conjecture Short} suggests that, for any fixed interval $I$, when $Q$ is sufficiently large, the upper bound for $C_0(Q,I)$ approaches $12/\pi^2$. Moreover, in the proof of Theorem \ref{thm: Mundici Conjecture Short}, we obtain an explicit asymptotic formula for $S_0(Q,I)$, which indicates that there does not exist an absolute constant $K$, independent of $I$, such that $C_0(Q,I)<K$ for all subintervals $I$.
\end{rem}



\section*{Structure of the paper} 

The paper is organized as follows. In Section \ref{sec: Section 2}, we prove Theorem~\ref{thm: Mundici conjecture} and reduce Theorem~\ref{thm: Mundici conjecture for large Q} to explicit asymptotic estimates for $D_{2,h}(Q)$, $h\geq 2$. We then show that these estimates follow from asymptotic formulas for $S_r(Q)$, $r\geq 1$. Assuming these auxiliary estimates, we complete the proof of Theorem \ref{thm: Mundici conjecture for large Q}. In Section \ref{sec: Section 3}, we reduce Theorem \ref{thm: Mundici Conjecture Short} to an explicit asymptotic formula for $S_0(Q,I)$. Sections \ref{sec: Section 4} and \ref{sec: Section 5} are devoted to the asymptotic formulas required in Section \ref{sec: Section 2} with explicit error bounds. Finally, in Section \ref{sec: Section 6}, we prove the explicit asymptotic formula for $S_0(Q,I)$ needed in Section \ref{sec: Section 3}. 

\section{Auxiliary Theorems for Theorems \ref{thm: Mundici conjecture} and \ref{thm: Mundici conjecture for large Q}}\label{sec: Section 2}
In this section, we give a complete proof of Theorems~\ref{thm: Mundici conjecture} and \ref{thm: Mundici conjecture for large Q}, which concern results \mbox{in $h$-spacings.} Theorem~\ref{thm: Mundici conjecture} requires no additional lemma, whereas the proof of Theorem \ref{thm: Mundici conjecture for large Q} rests on two supporting theorems. 
\subsection{Proof of Theorem~\ref{thm: Mundici conjecture}}

It will be shown later in \eqref{def:D2,h(Q)} that
\[
D_{2,h}(Q) = \sum_{j=1}^N \left(\sum_{r=0}^{h-1}\ell_{j+r}\right)^2. 
\]
By the Cauchy--Schwarz inequality, 
\[
\left(\sum_{r=0}^{h-1}\ell_{j+r}\right)^2\leq h \sum_{r=0}^{h-1} \ell_{j+r}^2. 
\]
Therefore, 
\begin{align*}
    D_{2,h}(Q)\leq h \sum_{r=0}^{h-1}\sum_{j=1}^N\ell_{j+r}^2= h^2 \sum_{j=1}^N \ell_j^2 = h^2D_{2,1}(Q),
\end{align*}
which gives
\[
C_h(Q)\leq h^2C_1(Q). 
\]
By the numerical computation in \cite[Theorem 1.3]{ViAnji}, 
\[
C_1(Q)\leq \frac{2}{\log 2},
\]
and therefore, for all $Q\geq 2$, 
\[
C_h(Q) \leq \frac{2h^2}{\log 2}.
\]
In fact, this maximum is attained at $Q=2$ for all $h\geq 1$. For $h\geq 1$, we have
\begin{align*}
   \gamma_{j+h}-\gamma_j =\frac{h}{2}
\end{align*}
for both $j=1,2$. Therefore,
\[
C_h(2) = \frac{D_{2,h}(2)\times2^2}{\log 2} = \frac{2h^2}{\log 2}.
\]
\subsection{First Supporting Theorem and Proof of Theorem \ref{thm: Mundici conjecture for large Q}}
We now turn to Theorem \ref{thm: Mundici conjecture for large Q}. To prove Theorem \ref{thm: Mundici conjecture for large Q}, it suffices to have the following theorem, which is a version of \cite[Theorem ~1]{BoCoZaha2001} with concrete error bounds.

\begin{thm}\label{thm: Theorem 1}
Let $Q$ be a positive integer, and let $\gamma_1,\gamma_2,\cdots,\gamma_N$ be the $Q$-th Farey sequence. For $h\geq 2$ and $Q\geq \max\{6163, 2^{(h+2)^2}\}$,
\begin{align*}
    \sum_{j=1}^{N} (\gamma_{j+h}-\gamma_j)^2 = \frac{12(2h-1)\log Q}{\pi^2Q^2}+\frac{D(h)}{Q^2}+E_h(Q),
\end{align*}
where $\gamma$ is Euler's constant,  
\[
B= \frac{1}{2}+\log 2 +2\sum_{h=1}^\infty\frac{\zeta(2h)-1}{2h-1}=2.546277\dots,
\]
\begin{align}
    D(h)  =
\frac{12}{\pi^2}
\left[
(2h-1)\left(
\gamma-\frac{\zeta'(2)}{\zeta(2)}
\right)
+\frac{h}{2}+
(h-1)B
+
\sum_{k=2}^{h-1}(h-k)I_k
\right],\label{def:D(h)}
\end{align}
\[
    I_r := \iint\limits_{\mathscr{T}}\frac{dxdy}{xyL_r(x,y)L_{r+1}(x,y)},
    \]
    with $\mathscr{T}$ the Farey triangle defined by $0<x,y\leq 1$ and $x+y>1$ in the plane, $L_r(x,y)$ defined in \eqref{def: Li(x,y) def 1} and \eqref{def: Li(x,y) def 2},
and 
\begin{align*}
    |E_h(Q)|\leq h\cdot 2^{3h+8}
\frac{(\log Q)^{1/(h+2)}}{Q^{2+1/(h+2)}}.
\end{align*}
When $h=2$, the bound in the error term can be improved to
\begin{align*}
    |E_2(Q)| \leq \frac{138\log Q}{Q^{5/2}}+\frac{216(\log Q)^{3/2}}{Q^{11/4}}+\frac{146(\log Q)^2+212\log Q+538}{Q^3}.
\end{align*}
\end{thm}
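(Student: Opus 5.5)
We would follow the strategy of \cite{BoCoZaha2001}, tracking every constant explicitly. Write $\ell_j=\gamma_{j+1}-\gamma_j$, so that
\[
\gamma_{j+h}-\gamma_j=\sum_{r=0}^{h-1}\ell_{j+r}.
\]
Expanding the square and using periodicity gives
\begin{align*}
D_{2,h}(Q)=h\sum_{j=1}^N\ell_j^2+2\sum_{r=1}^{h-1}(h-r)S_r(Q),\qquad S_r(Q):=\sum_{j=1}^N \ell_j\ell_{j+r}.
\end{align*}
The whole problem therefore reduces to explicit asymptotics for $D_{2,1}(Q)=S_0(Q)$ and for the correlation sums $S_r(Q)$ with $1\le r\le h-1$.

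For the diagonal term we import the explicit formula from \cite{ViAnji}:
\[
D_{2,1}(Q)=\frac{12}{\pi^2Q^2}\Bigl(\log Q+\gamma-\frac{\zeta'(2)}{\zeta(2)}+\frac12\Bigr)+O^*(\text{explicit}\cdot Q^{-3}\log^2 Q).
\]
For $S_r(Q)$ we parametrize consecutive Farey fractions by their denominators. If $\gamma_j=a/q$ and $\gamma_{j+1}=a'/q'$, then $\ell_j=1/(qq')$. The pair $(q/Q,q'/Q)$ ranges over the primitive lattice points of $Q\mathscr{T}$, and the BCZ map $T(x,y)=(y,\lfloor (1+x)/y\rfloor y-x)$ produces the subsequent denominators. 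Hence $q_{j+r}=QL_r(q/Q,q'/Q)$ up to the linear recursion, and
\[
S_r(Q)=\frac{1}{Q^4}\sum_{\substack{(q,q')\in Q\mathscr{T}\\ \gcd(q,q')=1}} \frac{1}{xy\,L_r L_{r+1}}\Big|_{(x,y)=(q/Q,q'/Q)}.
\]

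For $r=1$, and in particular $h=2$, $q_{j+2}=kq'-q$ with $k=\lfloor (Q+q)/q'\rfloor$. We would split $\mathscr{T}$ into the regions $\mathscr{T}_k$, use M\"obius inversion to remove the coprimality condition, and count lattice points in each region. Summing over $k$ produces the $\log Q$ term with coefficient $12/\pi^2$ and the constant $B$, where the $\zeta(2h)-1$ series comes from summing $1/k$-type contributions over $k$.

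For $r\ge2$ the function $1/(xyL_rL_{r+1})$ is integrable on $\mathscr{T}$, so $S_r(Q)\approx \frac{6}{\pi^2}I_r\,Q^{-2}$. The error comes from two sources: the boundary of the countably many regions $\mathscr{T}_{k_1,\dots,k_r}$ on which $L_r$ is linear, and the singular region near $y\to0$ where the integrand and $k$ blow up. We would truncate at a level where $\min(L_i)\ge\delta$. On the remaining region, with only $\ll \delta^{-(h+1)}$-many pieces, we would count lattice points with an explicit Möbius/Euler–Maclaurin error $O(\delta^{-c}Q\log Q)$. The cut-off tail, of area $\ll\delta$, would be bounded crudely. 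Optimizing $\delta\approx(Q^{-1}\log Q)^{1/(h+2)}$ gives the error $Q^{-2-1/(h+2)}(\log Q)^{1/(h+2)}$. The factor $2^{3h+8}$ would arise from the number of regions and from Lipschitz constants that double with each BCZ iterate, and the condition $Q\ge 2^{(h+2)^2}$ ensures the truncation level is consistent. For $h=2$ we would do the region-by-region counting exactly instead of truncating, which yields the sharper error bound stated for $E_2(Q)$.

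Collecting terms, we get $h\cdot\frac{12}{\pi^2}\log Q+2(h-1)\frac{6}{\pi^2}\log Q=\frac{12(2h-1)}{\pi^2}\log Q$, matching the main term, and the constants assemble into $D(h)$. The main obstacle is the explicit control, uniform in $r\le h-1$, of lattice point counts in the BCZ-iterated regions near the cusp. There the number of pieces and the growth of $L_r$ must be bounded by hand to produce the stated $2^{3h}$ dependence.
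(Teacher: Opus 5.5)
\textbf{Verdict.} Your reduction $D_{2,h}=hS_0+2\sum_{r=1}^{h-1}(h-r)S_r$, the explicit $S_0$ formula imported from \cite{ViAnji}, the assembly of the main term and of $D(h)$, and the truncate--count--optimize scheme for $r\ge 2$ with $\delta^{-1}\approx (Q/\log Q)^{1/(r+3)}$ all follow the paper's route. The treatment of $S_1$, however, has a genuine gap, described in the second paragraph.

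One caveat on the $r\ge 2$ part: the cut-off tail cannot be controlled by ``area $\ll\delta$,'' because $f_r$ is unbounded there. The paper, following \cite[Lemma 5]{BoCoZaha2001}, uses that at least three of $q_j,q_{j+1},q_{j+r},q_{j+r+1}$ are $\ge 2^{-r-2}Q$. This gives $f\le 2^{3r+6}/(Q^3\min)$ and is the actual source of the $2^{3r+6}$ factors, not doubling Lipschitz constants.

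The gap is in $S_1$, which enters every $D_{2,h}$ with weight $2(h-1)$, so it affects all $h\ge 2$. The integrand $1/(xy^2L_2)$ is not integrable on $\mathscr{T}$; the divergence sits in the cusp $y\to 0$, that is, at small middle denominators $s=q_{j+1}$, equivalently large $k$. For $k\ge 2$ the region $Q\mathscr{T}_k$ has height $\asymp Q/k^2$, which is less than one lattice row once $k\gg\sqrt Q$, and boundary length $\asymp Q/k$. On it $f\approx k^2/(4Q^4)$, so any estimate of the form $\frac{6}{\pi^2}\iint f+O\bigl(\|f\|_\infty(1+\text{length})\bigr)$ costs about $k/Q^3$ per region. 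Summed over $k\le 2Q$ this is of order $Q^{-1}$, which swamps the main term. So ``region-by-region counting instead of truncating'' yields no asymptotic at all, let alone $E_2(Q)$.

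Moreover, part of the $Q^{-2}$ constant is arithmetic and cannot come out of ``$\frac{6}{\pi^2}\times$area'' counts. The piece $\frac{6}{\pi^2}\bigl(\gamma-\frac{\zeta'(2)}{\zeta(2)}\bigr)$ of $A$ arises from
$\sum_{s\le z}\varphi(s)s^{-2}=\frac{6}{\pi^2}(\log z+\gamma)-\frac{\zeta'(2)}{\zeta(2)^2}+O(\log z/z)$.

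The missing idea is Hall's split at $s\le z=(2Q+1)/K$:
\begin{itemize}
\item For $s\le z$, write $S_1=\sum_s s^{-2}\sum_{(n,s)=1}1/(nt)$, Taylor-expand $1/(nt)$ around $Q^{-2}$, and use the arithmetic asymptotic above.
\item For $s>z$ (so $k<K$), evaluate $\sum 1/n$ over $(n,s)=1$ as $\frac{\varphi(s)}{s}\log\frac vu+O(\tau(s)/u)$ and apply partial summation. This gives $\frac{24}{\pi^2(2Q+1)^2}\bigl(\log\frac K2+B-B(K)\bigr)$.
\end{itemize}
The two pieces then combine: $\log z+\log\frac K2=\log(Q+\frac12)$, and the $z/Q$ term cancels the leading $2/K$ of $B(K)$. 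The choice $K=[Q^{1/4}(\log Q)^{-1/2}]$, together with $\tau(s)\le 2\sqrt s$, produces exactly the $Q^{-5/2}\log Q$ and $Q^{-11/4}(\log Q)^{3/2}$ terms of $E_2(Q)$.

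A minor point: the $\log Q$ coefficient of $S_1$ is $6/\pi^2$, not $12/\pi^2$ as you first wrote, although your final bookkeeping uses the correct value.
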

\begin{rem}
    The analogous result to Theorem~\ref{thm: Theorem 1} for $h=1$ has been obtained in \cite{ViAnji}, and states that
    \begin{align}
    \sum_{j=1}^{N} (\gamma_{j+1}-\gamma_j)^2 &= \frac{12\log Q}{\pi^2 Q^2}-\frac{2}{Q^2}\frac{\zeta'(2)}{\zeta(2)^2} +(2\gamma+1)\frac{6}{Q^2\pi^2}+E_{1},\label{eq:D(2,1)(Q) result}
\end{align}
where 
\[
|E_{1}|\leq \frac{64(\log Q)^2+106\log Q+269}{Q^3}.
\]
\end{rem}
\begin{proof}[Proof of Theorem \ref{thm: Mundici conjecture for large Q}.]
By the definition in \eqref{def:Ch(Q)} and Theorem \ref{thm: Theorem 1},
\begin{align*}
    C_h(Q) =\frac{12(2h-1)}{\pi^2}+\frac{D(h)}{\log Q}+\frac{Q^2E_h(Q)}{\log Q}.
\end{align*}
When $h=1$, by \cite[Theorem 1.3]{ViAnji}, $C_1(Q)<3$ for all $Q>1$. 

For $h=2$, 
\begin{align*}
    |E_2| \leq \frac{138\log Q}{Q^{5/2}}+\frac{216(\log Q)^{3/2}}{Q^{11/4}}+\frac{146(\log Q)^2+212\log Q+538}{Q^3}.
\end{align*}
Define 
\[
G_2(Q):=
\frac{36}{\pi^2}
+
\frac{D(2)}{\log Q}
+
\frac{138}{Q^{1/2}}
+
\frac{216(\log Q)^{1/2}}{Q^{3/4}}
+
\frac{146\log Q+212+\frac{538}{\log Q}}{Q}
\]
Observe that each nonconstant summand in $G_2(Q)$ is decreasing when $Q$ increases, and therefore, it takes the largest value at $Q=6163$, which evaluates to be 
\[
G_2(6163)=7.54749759\dots
\]
Thus, to finish the proof for $h=2$, it suffices to find the smallest $Q_2\geq 6163$ such that  
\[
G_2(Q_2)<6.
\]
 A numerical computation shows that $Q_2=19397$.

For $h\geq 3$, we have
\begin{align*}
    |E_h(Q)|\leq h\cdot 2^{3h+8}
\frac{(\log Q)^{1/(h+2)}}{Q^{2+1/(h+2)}}.
\end{align*}
Therefore, 
\begin{align*}
    C_h(Q)\leq G_h(Q),
\end{align*}
where
\begin{align*}
    G_h(Q)
=
\frac{12(2h-1)}{\pi^2}
+
\frac{D(h)}{\log Q}
+
h2^{3h+8}
\frac{1}
{Q^{1/(h+2)}(\log Q)^{(h+1)/(h+2)}}.
\end{align*}
Observe that $D(h)$ is positive, since each summand in $D(h)$ is positive. In order to complete the proof of the theorem, we need to find a $Q_h\geq\max\{6163,2(h-1),2^{(h+2)^2}\}$ such that
    \begin{align}
    \frac{D(h)}{\log Q_h}
+
\frac{h2^{3h+8}}
{Q_h^{1/(h+2)}(\log Q_h)^{(h+1)/(h+2)}}<3h-\frac{12(2h-1)}{\pi^2}.\label{eq: optimization Qh}
\end{align}
For simplicity, denote 
\begin{align*}
   \Delta_h:=3h-\frac{12(2h-1)}{\pi^2}.
\end{align*}
A sufficient choice of $Q_h$ can be obtained by forcing each term in \eqref{eq: optimization Qh} to be less than $\Delta_h/2$. 
Then we have
\begin{align*}
Q_h&>\exp\left\{
\frac{2D(h)}{\Delta_h}\right\},
\shortintertext{and}
\log Q_h \exp\left\{
\frac{\log Q_h}{h+1}\right\}&>\left(\frac{h\cdot2^{3h+9}}{\Delta_h}\right)^{\frac{h+2}{h+1}},
\end{align*}
which is equivalent to
\begin{align}
Q_h>\exp\left\{(h+1)W\left(\frac{1}{h+1}\left(\frac{h\cdot2^{3h+9}}{\Delta_h}\right)^{\frac{h+2}{h+1}}\right)\right\},\label{eq:upper bound of Qh}
\end{align}
where $W(x)$ is the Lambert $W$ function. Since the lower threshold for $Q_h$ in \eqref{eq:upper bound of Qh} is larger \mbox{than $\max\{6163, 2^{(h+2)^2}\}$} for all $h\geq 3$, we can safely take
\[
Q_h = \exp\left\{\max\left\{
\frac{2D(h)}{\Delta_h},(h+1)W\left(\frac{1}{h+1}\left(\frac{h\cdot2^{3h+9}}{\Delta_h}\right)^{\frac{h+2}{h+1}}\right)\right\}\right\}.
\]
This finishes the proof of Theorem \ref{thm: Mundici conjecture for large Q}.
\end{proof}

\subsection{Second Supporting Theorem} 

In this subsection, we further reduce the proof of Theorem~\ref{thm: Theorem 1}, which is the auxiliary theorem used to prove Theorem~\ref{thm: Mundici conjecture for large Q}, to the following result.
Denote
\begin{align}\label{def:Sr(Q)}
    S_r(Q) = \sum_{j = 1}^{N} (\gamma_{j+1} - \gamma_j)(\gamma_{j+r+1} - \gamma_{j+r})
\end{align}
for each $r \geq 0$.

\begin{thm}\label{thm: Theorem for Sr(Q)}
Let $Q\geq 2, r\geq 1$ be integers, and $S_r(Q)$ be defined as in \eqref{def:Sr(Q)}. 
\mbox{For $Q\geq 6163$,} we have
 \[
S_1(Q) = \frac{6}{\pi^2}Q^{-2} \log Q + AQ^{-2}+R_1, 
\]
where 
\[
A = \frac{6}{\pi^2}\left(\gamma-\frac{\zeta'(2)}{\zeta(2)}+B\right),
\]
the constant $\gamma$ is Euler's constant,  
\begin{align*}
B&= \frac{1}{2}+\log 2 +2\sum_{h=1}^\infty\frac{\zeta(2h)-1}{2h-1}=2.546277\dots,
\shortintertext{and}
|R_1|&\leq \frac{69\log Q}{Q^{5/2}}+\frac{108(\log Q)^{3/2}}{Q^{11/4}}+\frac{9(\log Q)^2}{Q^3}.
\end{align*}
Moreover, for $r\geq 2$ and $Q/\log Q\geq 2^{(r+3)^2}$, we have
 \[
    S_r(Q) = \frac{6I_r}{\pi^2Q^2} + R_r,
    \]
    with 
    \begin{align*}
|R_r|
&\leq
\left(3\cdot 2^{3r+6}
+3\cdot 2^{3r+8}
+\frac{9\cdot 2^{3r+8}r}{\pi^2}\right)
\frac{\log^{1/(r+3)}Q}{Q^{2+1/(r+3)}} +7\cdot 2^{3r+6}\frac{(1+4\log Q)}{(\log Q)^{1/(r+3)}Q^{3-1/(r+3)}}\\
& 
+2^{3r+6}\frac{1}{(\log Q)^{1/(r+3)}Q^{3-1/(r+3)}} +\frac{2^{3r+6}r}{Q^3}
\left(\log Q+2\right),
\end{align*}
    and where \[
    I_r := \iint\limits_{\mathscr{T}}\frac{dxdy}{xyL_r(x,y)L_{r+1}(x,y)},
    \]
    with $\mathscr{T}$ the Farey triangle defined by $0<x,y\leq 1$ and $x+y>1$ in the plane, and $L_r(x,y)$ defined in \eqref{def: Li(x,y) def 1} and \eqref{def: Li(x,y) def 2}.
\end{thm}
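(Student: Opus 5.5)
We use the standard parametrization of consecutive Farey fractions by the BCZ map. If $\gamma_j=a/q$ and $\gamma_{j+1}=a'/q'$ are consecutive in $F_Q$, then $\ell_j=\gamma_{j+1}-\gamma_j=1/(qq')$. The pair $(q/Q,q'/Q)$ lies in the Farey triangle $\mathscr{T}$. Moreover, the map $(q,q')\mapsto(q',kq'-q)$ with $k=\lfloor (Q+q)/q'\rfloor$ generates the successive denominators $q_{j+r}$. Hence $q_{j+r}=Q\,L_r(q/Q,q'/Q)$ in terms of the functions $L_r$ from \eqref{def: Li(x,y) def 1}--\eqref{def: Li(x,y) def 2}. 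The summand of $S_r(Q)$ then equals
\[
\frac{1}{q_jq_{j+1}q_{j+r}q_{j+r+1}}=\frac{1}{Q^4}\,\frac{1}{xyL_r(x,y)L_{r+1}(x,y)},\qquad (x,y)=(q/Q,q'/Q).
\]
Since consecutive denominator pairs are in bijection with primitive lattice points $(q,q')$, $\gcd(q,q')=1$, in $Q\mathscr{T}$, we get
\[
S_r(Q)=\sum_{\substack{(q,q')\in Q\mathscr{T}\\ \gcd(q,q')=1}} \frac{1}{qq'q_{r}q_{r+1}}.
\]
The plan is to evaluate this lattice sum explicitly.

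For $r\ge 2$, partition $\mathscr{T}$ into the regions $\mathscr{T}_{\mathbf k}$ on which the index tuple $(k_1,\dots,k_{r})$ is constant. On each such region $L_r$ and $L_{r+1}$ are linear, so the summand is a smooth rational function $f$. Regions with large $k_i$ are thin, with $k\approx$ large only near the edges $x\to0$ or $y$ small. We cut at a parameter $T$: regions where some $k_i>T$ go into a tail, and the remaining $O(T^{r})$ polygons are kept. On each kept polygon we replace the coprime sum by $\frac{6}{\pi^2}Q^2\iint f$. This uses Möbius inversion, $\sum_d\mu(d)$ of lattice counts in dilated polygons, together with an explicit lattice-point count with boundary error $\ll$ (perimeter)$\cdot\sup f+$ (area)$\cdot\sup|\nabla f|/Q$. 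Summing $d\le Q$ and truncating the Möbius tail contributes the $\log Q$ factors. The tail regions are handled by a crude bound, using that $L_rL_{r+1}\ge$ something of size $k$ times the small coordinate, and their contribution is bounded by the area times the sup of $f$. Balancing the main-region error $T^{r+2}/Q$ (roughly, with $\log Q$) against the tail $1/T$ gives $T\approx (Q/\log Q)^{1/(r+3)}$. This produces the stated shape $Q^{-2-1/(r+3)}(\log Q)^{1/(r+3)}$. The hypothesis $Q/\log Q\ge 2^{(r+3)^2}$ ensures $T\ge 2^{r+3}$, which is needed for the region geometry estimates. The explicit constants $2^{3r+6}$ come from bounding the number of regions and $\sup f$, $\sup|\nabla f|$ on each by powers of 2 in terms of $r$.

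For $r=1$ the integral $I_1$ diverges, because $L_1L_2$ is small near $y$ small with large $k$. This is the source of the $\log Q$ term. Here I would follow the method of \cite{ViAnji} for $S_0$. We sum over $k$ explicitly, using $q''=kq'-q$, and write $S_1=\sum_k\sum \frac{1}{qq'^2(kq'-q)}$. The $k=1$ region is a genuine integral. For $k\ge2$ we use the partial fraction $\frac{1}{q(kq'-q)}=\frac{1}{kq'}\left(\frac1q+\frac{1}{kq'-q}\right)$, which reduces everything to sums of the type $\sum_{\gcd=1}1/(q'^3\cdot)$ times harmonic-like sums in $q$. These are evaluated via $\sum_{n\le x}\varphi(n)/n^2=\frac{6}{\pi^2}(\log x+\gamma-\zeta'(2)/\zeta(2))+O(\log x/x)$ with an explicit error. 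The constant $B$ arises from $\sum_k$ of the explicit region integrals, namely the $\log$ terms expanded as $\sum(\zeta(2h)-1)/(2h-1)$. The error terms $Q^{-5/2}\log Q$ and $Q^{-11/4}(\log Q)^{3/2}$ come from applying the Möbius/lattice step with an optimized cutoff $k\le \sqrt Q$-type. The condition $Q\ge 6163$ is used to make all explicit lower-order pieces absorb into the stated constants.

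I expect the main obstacle to be the explicit bookkeeping for $r\ge2$. This has two parts: controlling uniformly, with explicit constants, the number and shape of the regions $\mathscr{T}_{\mathbf k}$, whose boundaries are determined by the iterated BCZ map, and bounding the gradient of $1/(xyL_rL_{r+1})$ on them. To handle this, I would first prove a lemma that on each region $L_i\ge 2^{-i}\cdot$(min coordinate) and $k_i\le 2^{i}/y$-type bounds hold, and then feed these into the lattice-counting estimate.
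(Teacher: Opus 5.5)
Your overall route is the paper's: the Boca--Cobeli--Zaharescu lattice-point scheme for $r\ge 2$, and Hall-type explicit summation for $r=1$. However, the $r\ge2$ part is missing the second truncation that makes the lattice-point step work, and the geometry you describe is wrong.

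\textbf{The gap for $r\ge2$.} Imposing $k_1,\dots,k_r\le T$ forces $L_i>(1+L_{i-1})/(T+1)$ only for $1\le i\le r$. It says nothing about $x=L_0$ or $L_{r+1}$. Since $k_i$ is large exactly when $L_i$ is small, the edge $x\to 0$ is not in your large-$k$ tail.
\begin{itemize}
\item Near the corner $(0,1)$ of $\mathscr{T}$ one has $k_1=1$, $k_2=\dots=k_r=2$ and $L_i=y-(i-1)x$. So the kept polygon $\mathscr{T}_{(1,2,\dots,2)}$ reaches $x=0$, where $f_r=1/(xyL_rL_{r+1})$ blows up like $1/x$.
\item On that polygon $\sup f$ and $\sup|\nabla f|$ are infinite.
\item Even at lattice points, $(1,Q)$ lies in it with $f(1,Q)\asymp Q^{-3}$. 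So (perimeter)$\cdot\sup f$ is already $\asymp Q^{-2}$, the size of the main term.
\end{itemize}
The paper fixes this by passing to the convex sets $\Omega_{\mathbf k,M}$ on which $\min(x,y,L_{\mathbf k,r},L_{\mathbf k,r+1})\ge Q/M$. There $\|f\|_\infty\le 2^{3r+6}M/Q^4$ and $\|\nabla f\|_\infty\le 3\cdot 2^{3r+6}k_1\cdots k_rM^2/Q^5$, and the discarded terms cost at most $2^{3r+8}/(MQ^2)$ (Lemma~\ref{lem: Lemma 6}). Your exponent $T^{r+2}$ is exactly $T^rM^2$ with $M=T$, so your balancing already presupposes this cutoff.

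The input that controls both the discarded terms and the large-$k$ tail is BCZ's Lemma 5: at least three of $q_j,q_{j+1},q_{j+r},q_{j+r+1}$ are $\ge 2^{-r-2}Q$. Hence a term whose smallest denominator is $m$ is at most $2^{3r+6}/(Q^3m)$. One then sums this over the $O(\varphi(m))$ indices at which $m$ occurs, for $m<Q/M$ (respectively $m\le 2Q/T$, since $k_i>T$ forces $q_{j+i}<2Q/T$). Two further problems:
\begin{itemize}
\item ``Area times $\sup f$'' is not enough for the tail, because $f$ is unbounded there too.
\item The auxiliary lemma you propose, $L_i\ge 2^{-i}\min(x,y)$, is false. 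At $(x,y)=(0.97,0.99)$ one gets $k_1=1$ and $L_2=0.02$.
\end{itemize}

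\textbf{A smaller point for $r=1$.} Your sum $\sum 1/(qq'^2(kq'-q))$ is the Hall--Tenenbaum representation the paper uses. However, the error exponents in $R_1$ do not come from a lattice/M\"obius step, and a $\sqrt Q$-type cutoff would not give them. The paper splits the middle denominator at $s\le z=(2Q+1)/K$ with $K=[Q^{1/4}(\log Q)^{-1/2}]$.
\begin{itemize}
\item For $s\le z$ (large $k$), it expands $1/(nt)$ around $Q^{-2}$. This gives $\log z+\gamma-\zeta'(2)/\zeta(2)$ with error $\ll z^2/Q^4+\log z/(Q^2z)$.
\item For $2\le k<K$, it writes the coprime harmonic sums as $\frac{\varphi(s)}{s}\log(v/u)+O(\tau(s)/u)$. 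It then compares $\sum\varphi(s)s^{-4}\log(\cdot)$ with integrals by partial summation, producing $\log(K/2)+B-B(K)$ with error $\ll K^2(\log Q)^2/Q^3$.
\end{itemize}
With $K\sim\sqrt Q$ this last bound would be of size $(\log Q)^2/Q^2$, larger than the main term. Balancing it against $z^2/Q^4$ is what forces $K\approx Q^{1/4}(\log Q)^{-1/2}$. That choice yields the $Q^{-5/2}\log Q$ and $Q^{-11/4}$-type terms.
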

\begin{proof}[Proof of Theorem \ref{thm: Theorem 1}.]
In the definition of $S_r(Q)$ in \eqref{def:Sr(Q)}, set $\ell_j=\gamma_{j+1}-\gamma_j$ for $j\geq 0$. It is clear that $\ell_{j+N}=\ell_j = \ell_{N-j-1}$. Then, 
\[
S_r(Q) = \sum_{j=1}^{N}\ell_j\ell_{j+r}.
\]
With this, we may recursively write 
\begin{align}
   D_{2,h}(Q)&=\sum_{j=1}^{N} (\ell_{j+h-1}+\ell_{j+h-2}+\cdots+\ell_j)^2\notag\\
    &=hS_0(Q)+2\sum_{k=1}^{h-1} (h-k)S_k(Q)\label{def:D2,h(Q)}
\end{align}  
for $h\geq 2$. 

Applying \eqref{eq:D(2,1)(Q) result} and Theorem~\ref{thm: Theorem for Sr(Q)} for $r=1$ to \eqref{def:D2,h(Q)}, we obtain for $Q\geq 6163$, 
\begin{align*}
    \sum_{j=1}^{N}(\gamma_{j+2}-\gamma_j)^2 &= 2(S_0(Q)+S_1(Q))\\
    &=\frac{36\log Q}{\pi^2 Q^2} + \frac{12}{Q^2\pi^2}\left(3\gamma-3\frac{\zeta'(2)}{\zeta(2)}+B+1\right)+E_2,
\end{align*}
where
\begin{align*}
    |E_2| \leq \frac{138\log Q}{Q^{5/2}}+\frac{216(\log Q)^{3/2}}{Q^{11/4}}+\frac{146(\log Q)^2+212\log Q+538}{Q^3}.
\end{align*}
This finishes the proof for $h=2$. 

Similarly, for $h\geq 3$, applying \eqref{eq:D(2,1)(Q) result} and Theorem~\ref{thm: Theorem for Sr(Q)} for $r=1,2,3,\cdots,h-1$, we obtain for $Q\geq \max\{6163, 2(h-1)\}$,
\begin{align*}
    \sum_{j=1}^{N} (\gamma_{j+h}-\gamma_j)^2 = \frac{12(2h-1)\log Q}{\pi^2Q^2}+\frac{D(h)}{Q^2}+E_h(Q),
\end{align*}
where 
\begin{align*}
    D(h)  &=
\frac{12}{\pi^2}
\left[
(2h-1)\left(
\gamma-\frac{\zeta'(2)}{\zeta(2)}
\right)
+\frac{h}{2}+
(h-1)B
+
\sum_{k=2}^{h-1}(h-k)I_k
\right]
\shortintertext{and}
    |E_h(Q)|&\leq \frac{138(h-1)\log Q}{Q^{5/2}}
+
\frac{216(h-1)(\log Q)^{3/2}}{Q^{11/4}} 
+
\frac{
(82h-18)(\log Q)^2
}{Q^3} +\frac{106h\log Q}{Q^3}\\
&
\quad+\frac{269h}{Q^3}+
2\sum_{k=2}^{h-1}(h-k)2^{3k+6}
\Bigg[
\left(
15+\frac{36k}{\pi^2}
\right)
\frac{(\log Q)^{1/(k+3)}}{Q^{2+1/(k+3)}}
\\
&\quad+
\frac{28\log Q+8}{(\log Q)^{1/(k+3)}Q^{3-1/(k+3)}} +
\frac{k(\log Q+2)}{Q^3}
\Bigg]\leq h\cdot 2^{3h+8}
\frac{(\log Q)^{1/(h+2)}}{Q^{2+1/(h+2)}}.
\end{align*}
This finishes the proof of Theorem \ref{thm: Theorem 1}.
\end{proof}

Therefore, what remains for the results in the direction of $h$-spacings is to prove Theorem~\ref{thm: Theorem for Sr(Q)}, which we present in Sections \ref{sec: Section 4} and \ref{sec: Section 5}. We split the proof into two parts. In Section \ref{sec: Section 4}, we provide an asymptotic formula for $S_1(Q)$, and leave the formula of $S_r(Q)$ for $r\geq 2$ to Section~\ref{sec: Section 5}.

\section{Auxiliary Theorem for Theorem \ref{thm: Mundici Conjecture Short}}
\label{sec: Section 3}
In this section, we lay out the proof of Theorem \ref{thm: Mundici Conjecture Short}. We show that it suffices to prove the following result.


\begin{thm}\label{thm: asymptotic of S0(Q,I) with error order -2}
   For any integer \(Q>1024\) and any subinterval $I = (\alpha, \beta] \subseteq (0,1]$, we have 
\begin{align}
S_0(Q,I)= \frac{12|I|}{\pi^2} \frac{\log Q}{Q^2} + |I|E_{1,I}
+E_{2,I},\label{eq:asymptotic for S0(Q,I) in Lem 1.8}
\end{align}
where $S_0(Q,I)$ is defined in \eqref{def:S0(Q,I)},
\begin{align*}
    |E_{1,I}|\leq\frac{6(2\gamma+1)}{\pi^2Q^2}-\frac{2\zeta'(2)}{Q^2\zeta(2)^2}+\frac{64(\log Q)^2+106\log Q+269}{Q^3}+\frac{(4Q^{1/10}-2)\log Q}
{(Q^{1/10}-1)^2Q^2}, 
\end{align*}
and
\begin{align*}
|E_{2,I}|
&\leq \frac{\pi^4}{18Q^2}+Q^{-5/2}+4Q^{-21/10+2.1322/(\log\log Q-0.1054)}(1+Q^{-1/10})
(2\log Q+\log^2Q)\notag\\
&\quad+\frac{(4Q^{1/10}-2)\log Q}
{(Q^{1/10}-1)^2Q^2}+\frac{\pi^2Q^{-9/5}}{6(Q^{1/10}-1)^2}.
\end{align*}
\end{thm}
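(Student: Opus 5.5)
We will use the standard parametrization of consecutive Farey fractions. If $\gamma_j=a/q$ and $\gamma_{j+1}=a'/q'$ are consecutive in $F_Q$, then
\[
a'q-aq'=1,\qquad q+q'>Q,\qquad 1\le q,q'\le Q,
\]
so $\gamma_{j+1}-\gamma_j=1/(qq')$. Conversely, every coprime pair $(q,q')$ with $q+q'>Q$ determines a unique consecutive pair, with $a\equiv -\bar{q'}\pmod q$. Hence
\[
S_0(Q,I)=\sum_{\substack{1\le q,q'\le Q,\ q+q'>Q\\ (q,q')=1}}\frac{1}{q^2q'^2}\,\mathbf 1_I\!\left(\frac{a(q,q')}{q}\right).
\]
The plan is to split this sum by the size of $q$, using a threshold $T=Q^{9/10}$ suggested by the exponents $Q^{1/10}$ in the error terms.

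\textbf{Small denominators, $q\le T$ or $q'\le T$.} Here the weight $1/(qq')^2$ is large, but there are few such pairs. Since $q+q'>Q$, one of them is small and the other is at least $Q-T$. We drop the restriction to $I$ and bound the contribution trivially by
\[
\sum_{q\le T}\frac{1}{q^2}\sum_{Q-q<q'\le Q}\frac{1}{q'^2}.
\]
This is where terms such as $\frac{\pi^2Q^{-9/5}}{6(Q^{1/10}-1)^2}$ and $\frac{(4Q^{1/10}-2)\log Q}{(Q^{1/10}-1)^2Q^2}$ in $E_{2,I}$ come from. The same region also has to be removed from the full-interval main term, which explains why such a term appears in $E_{1,I}$ as well. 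The larger constant piece $\pi^4/(18Q^2)$ presumably arises from the pairs with $q$ very small, which we cannot equidistribute. It can be bounded by
\[
\sum_q q^{-2}\sum_{q'}q'^{-2}\cdot\frac{1}{Q^2}\cdot(\cdots),
\]
that is, roughly $2\zeta(2)^2/Q^2$.

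\textbf{Both $q,q'>T$: equidistribution.} We fix $q$ and sum over $q'$ in the interval $(Q-q,Q]$ coprime to $q$, counting those for which $-\bar{q'}/q\in(\alpha,\beta]$ modulo $1$. Equivalently, we count $x=\bar{q'}\pmod q$ lying in a residue interval of length about $|I|q$, with $q'$ weighted by the smooth function $1/q'^2$. Detecting both interval conditions with finite Fourier expansions mod $q$ gives:
\begin{itemize}
\item a main term equal to $|I|$ times the full sum restricted to this region;
\item error terms involving incomplete Kloosterman sums $\sum_{q'}e\big((hq'+k\bar{q'})/q\big)$.
\end{itemize}
We bound the Kloosterman sums by Weil's bound $\le \tau(q)(h,k,q)^{1/2}q^{1/2}$. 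After completing the sums and using partial summation for the weight $1/q'^2$, each $q$ contributes
\[
\ll q^{-2}\,Q^{-2}\,\tau(q)\,q^{1/2}\log^2 q .
\]
Summing over $q\sim Q$ gives roughly $Q^{-5/2}\tau\log^2$. With the explicit divisor bound
\[
\tau(q)\le q^{1.066/\log\log q}
\]
(Nicolas--Robin), this produces the term $4Q^{-21/10+2.1322/(\log\log Q-0.1054)}(2\log Q+\log^2Q)$. The factor $(1+Q^{-1/10})$ and the exponent $-21/10$ reflect the range $q>T$.

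\textbf{Main term.} For the main term we use the full-interval formula \eqref{eq:D(2,1)(Q) result}: the full sum equals $\frac{12\log Q}{\pi^2Q^2}$ plus the constant terms plus $E_1$. Multiplying by $|I|$, and correcting for the removed small-denominator region, gives $|I|E_{1,I}$ with exactly the listed pieces.

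\textbf{Main obstacle.} The hard step is making the Fourier/Kloosterman estimate fully explicit. This means controlling the Fourier truncation of the sharp cutoff $\mathbf 1_I$ with explicit constants, so that the total stays within $Q^{-5/2}$ plus the divisor-weighted term, and the $\log^2$ factors do not blow up. I would first try Erdős--Turán-type majorants and minorants of length $q$, which give exact completion mod $q$, and then apply Weil's bound term by term.
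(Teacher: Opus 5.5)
\textbf{The gap is in the small-denominator region: it is not an error term.}

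Take $q_j=q<Q^{9/10}$. Then $q_{j+1}\in(Q-q,Q]$, and every reduced $a/q\in I$ occurs exactly once as $\gamma_j$. So this region contributes about $Q^{-2}\sum_{q<Q^{9/10}}\#\{a\in qI:(a,q)=1\}/q^2\approx\frac{6|I|}{\pi^2}\cdot\frac{9}{10}\cdot\frac{\log Q}{Q^2}$. The region $q_{j+1}<Q^{9/10}$ contributes the same amount. Together they make up nine tenths of the main term $\frac{12|I|}{\pi^2}\frac{\log Q}{Q^2}$.

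Your trivial bound, with $I$ dropped, satisfies $\sum_{q\le T}q^{-2}\sum_{Q-q<q'\le Q}q'^{-2}\ge Q^{-2}\sum_{q\le T}q^{-1}\ge\frac{9}{10}\frac{\log Q}{Q^2}$. This is of order $\log Q/Q^2$ and is not proportional to $|I|$. It therefore cannot be absorbed into $E_{2,I}$, every term of which is $O(Q^{-2})$. In particular:
\begin{itemize}
\item the terms $\frac{\pi^2Q^{-9/5}}{6(Q^{1/10}-1)^2}$ and $\frac{(4Q^{1/10}-2)\log Q}{(Q^{1/10}-1)^2Q^2}$ do not come from such a bound;
\item your later step of ``correcting for the removed region'' in $|I|$ times the full sum has nothing to correct against.
\end{itemize}
Shrinking the region does not help within this framework. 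Trivially bounding $q\le Q^{\delta}$ still costs $\asymp\delta\log Q/Q^2$. Meanwhile the Kloosterman estimate with the crude bound $\|f\|_\infty\le Q^{-4\delta}$ (as in Lemma~\ref{lem: Lemma 10}) beats $Q^{-2}$ only for $\delta>7/8$.

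\textbf{The missing idea is to evaluate the small-denominator pieces exactly and compare them with $|I|$ times their full-interval analogues.}

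With $T=Q^{1/10}$ and $q_j<Q/T$, one has $q_{j+1}^{-2}\in[Q^{-2},(1-1/T)^{-2}Q^{-2}]$. Hence $T_2(Q,I)=Q^{-2}\sum_{q<Q/T}\#\{a\in qI:(a,q)=1\}q^{-2}$, up to an error $\frac{(2T-1)(\log(Q/T)+1)}{(T-1)^2Q^2}$.

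For $q_{j+1}<Q/T$ the same holds with $I$ replaced by $(\alpha,\beta+2/Q]$ or $(\alpha+2/Q,\beta]$. Since $2q/Q<1$ for $Q>1024$, these replacements change $qI$ by at most one integer. This is the actual source of $\frac{\pi^2}{6}(1-1/T)^{-2}Q^{-2}$.

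M\"obius inversion gives $|\#\{a\in qI:(a,q)=1\}-|I|\varphi(q)|\le\sigma_0(q)$. Summing, $\frac{2}{Q^2}\sum_q\sigma_0(q)q^{-2}\le\frac{2\zeta(2)^2}{Q^2}=\frac{\pi^4}{18Q^2}$. This defect estimate, not a double sum over $q,q'$, is where that term comes from.

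Combined with your Kloosterman step on $q_j,q_{j+1}\ge Q/T$, this gives $S_0(Q,I)=|I|S_0(Q)+(\text{error})$. Only then is \eqref{eq:D(2,1)(Q) result} inserted.

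Two smaller points on the Kloosterman step:
\begin{itemize}
\item Detecting $\bar b\in I_a$ via $\frac1a\sum_{l=1}^{a}e(l(\bar b-x)/a)$ is exact, so no Erd\H{o}s--Tur\'an truncation is needed.
\item The exponent $-21/10$ comes from the crude bound $\|f\|_\infty\le T^4/Q^4$ on $\min(q_j,q_{j+1})\ge Q/T$. It does not come from $q\sim Q$, which would give $Q^{-5/2}$.
\end{itemize}
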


\begin{proof}[Proof of Theorem \ref{thm: Mundici Conjecture Short}.]

Recall that the Mundici-type constant $C_0(Q,I)$ is defined by
\[
C_0(Q,I)=\frac{S_0(Q,I)Q^2}{|I|\log Q}.
\]
Note that by definition, we have
\[
S_0(Q,I)\leq S_0(Q). 
\]
Since the numerical computation in \cite[Theorem 1.3]{ViAnji} gives $S_0(Q)\leq 2/\log 2$, we trivially have 
\[
C_0(Q,I)\leq \frac{2}{|I|\log 2}
\]
for any $Q>1$. To obtain the sharper bound in Theorem \ref{thm: Mundici Conjecture Short}, we proceed as follows.

By the asymptotic formula in \eqref{eq:asymptotic for S0(Q,I) in Lem 1.8}, we obtain that for $Q>1024$,
\begin{align}
C_0(Q,I)\leq \frac{12}{\pi^2} + \frac{E_{1,I}(Q)Q^2} {\log Q}+\frac{E_{2,I}(Q)Q^2}{|I|\log Q}.\label{eq:C_0(Q,I) explicit}
\end{align}
Observe that for \(Q\geq\exp\{{e^{22}}\}\), we have \[\upsilon(Q):=\frac{2.1322}{\log\log Q-0.1054}-\frac{1}{10}<0.\]
Substituting  \(Q\geq\exp\{{e^{22}}\}\) in \eqref{eq:C_0(Q,I) explicit}, we have
\[
E_{1,I}(Q)Q^2<2.01 \quad\text{and}\quad E_{2,I}(Q)Q^2<7.06.
\]

Therefore, we conclude that for $Q\geq \exp\{{e^{22}}\}$, 
\[
C_0(Q,I)< \frac{12}{\pi^2} + \frac{2.01} {\log Q}+\frac{7.06}{|I|\log Q}.
\]
In particular, when $|I|\geq 4/\log Q$, we have 
\[
C_0(Q,I)<3,
\]
which finishes the proof of Theorem \ref{thm: Mundici Conjecture Short}.     
\end{proof}

Thus, what remains in the direction of short intervals is the proof of Theorem \ref{thm: asymptotic of S0(Q,I) with error order -2}, whose proof is postponed until Section \ref{sec: Section 6}.
\section{Asymptotic Formula for \texorpdfstring{$S_1(Q)$}{S1(Q)}}\label{sec: Section 4} 
 
In \cite{Hall1994}, Hall proved that
\[
S_1(Q) = \frac{6}{\pi^2}Q^{-2} \log Q + AQ^{-2}+O\left(\frac{\log Q}{Q^2\sqrt{Q}}\right), 
\]
where 
\[
A = \frac{6}{\pi^2}\left(\gamma-\frac{\zeta'(2)}{\zeta(2)}+B\right),
\]
the constant $\gamma$ is Euler's constant, and 
\[
B= \frac{1}{2}+\log 2 +2\sum_{h=1}^\infty\frac{\zeta(2h)-1}{2h-1}=2.546277\dots
\]
Our goal in this section is to trace Hall's proof in \cite{Hall1994} and obtain the asymptotic formula \mbox{for $S_1(Q)$} in Theorem~\ref{thm: Theorem for Sr(Q)}. The result is as follows.

\begin{thm}\label{thm: S1(Q)}
    Let $Q\geq 6163$ be an integer, and $S_1(Q)$ be defined as in \eqref{def:Sr(Q)}. Then, 
    \[
S_1(Q) = \frac{6}{\pi^2}Q^{-2} \log Q + AQ^{-2}+R_1, 
\]
where 
\[
A = \frac{6}{\pi^2}\left(\gamma-\frac{\zeta'(2)}{\zeta(2)}+B\right),
\]
the constant $\gamma$ is Euler's constant,  
\begin{align*}
B&= \frac{1}{2}+\log 2 +2\sum_{h=1}^\infty\frac{\zeta(2h)-1}{2h-1}=2.546277\dots,
\shortintertext{and}
|R_1|&\leq \frac{69\log Q}{Q^{5/2}}+\frac{108(\log Q)^{3/2}}{Q^{11/4}}+\frac{9(\log Q)^2}{Q^3}.
\end{align*}
\end{thm}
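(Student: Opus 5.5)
We follow Hall's argument in \cite{Hall1994} and make every error term explicit. The starting point is the standard description of three consecutive Farey fractions $\gamma_j=a'/q'<\gamma_{j+1}=a/q<\gamma_{j+2}=a''/q''$. Their denominators satisfy $q''=kq-q'$ with $k=\lfloor (Q+q')/q\rfloor$, so that
\[
\ell_j\ell_{j+1}=\frac{1}{q'q\,q''}=\frac{1}{q'q(kq-q')}.
\]
The consecutive pairs $(q',q)$ run exactly over the coprime pairs with $1\le q',q\le Q$ and $q+q'>Q$, apart from the boundary terms at the ends, which the periodic extension handles. Hence
\[
S_1(Q)=\sum_{\substack{1\le q,q'\le Q,\ q+q'>Q\\ (q,q')=1}}\frac{1}{q\,q'\,(kq-q')}.
\]
We split this sum according to the value of $k=\kappa(q',q)$. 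The pairs with $k=1$ occur only when $q=q'=1$ is impossible, so in practice $k\ge 2$ except in degenerate boundary cases. The regions with $k=2$ and $k\ge3$ are then treated separately.

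In the $k=2$ region, which is where $q''=2q-q'$ is small, we remove the coprimality condition by M\"obius inversion:
\[
\sum_{d\le Q}\frac{\mu(d)}{d^3}\sum_{\ldots}\frac{1}{mn(km-n)}.
\]
We then replace the inner lattice-point sums by integrals over the corresponding subregions of the Farey triangle scaled by $Q/d$. The $\log Q$ main term comes from the region where $2q-q'$ is small, that is, from the harmonic-type sum $\sum_{q''}1/q''$. Summing over $d$ produces $\zeta(2)^{-1}$ and $\zeta'(2)$ through $\sum\mu(d)\log d/d^2$, and the constant $B$ arises from the sum over $k$ of the areas and logarithmic integrals, which is where the $\zeta(2h)-1$ series appears. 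Each comparison of a sum with an integral will be done with explicit Euler--Maclaurin or boundary counting: a region of perimeter $O(Q/d)$ contributes an error of $O(d/Q)$ times the supremum of the integrand. The tails $d>D$ of the M\"obius sum will be bounded by $\sum_{d>D}d^{-2}\le 1/(D-1)$ in explicit form.

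The main obstacle is the part of the region near the line $q''=2q-q'=0$ (and its analogues for larger $k$), where the integrand is singular. There the lattice-point error does not scale simply, and Hall's $Q^{-5/2}\log Q$ saving comes from balancing a cutoff parameter. We will follow Hall's choice: the terms with $q''\le T$ are treated exactly as a sum over $q''$, with explicit bounds on each fiber, while the terms with $q''>T$ are handled by the integral approximation. The two errors are of sizes roughly $T\log Q/Q^3$ and $\log Q/(TQ^2)$, so we take $T\asymp\sqrt{Q}$, together with a secondary cutoff that produces the $(\log Q)^{3/2}Q^{-11/4}$ term. Finally, all the explicit constants are collected, and the condition $Q\ge6163$ is used to absorb lower-order terms, such as $1/Q^3$ and $\log Q/Q^3$, into the stated coefficients $69$, $108$ and $9$. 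This is a careful but routine numerical check at the endpoint, using the fact that each normalized error term is monotone in $Q$.
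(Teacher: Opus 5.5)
There is a genuine gap, and it starts with the summand. Since $\ell_j=1/(q_jq_{j+1})$, one has $\ell_j\ell_{j+1}=1/(q'q^2q'')$, not $1/(q'qq'')$. With your formula the sum has $\asymp Q^2$ terms of size $\asymp Q^{-3}$, so it is of order $Q^{-1}$ and cannot produce $\frac{6}{\pi^2}Q^{-2}\log Q$.

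Your claim that $k=1$ is degenerate is also false. We have $k=\lfloor (Q+q')/q\rfloor=1$ exactly when $2q>Q+q'$, and this is half of the Farey triangle by area. For example, the triple $1/3,2/5,1/2$ in $F_5$ has $k=1$ and $q''=5-3=2$.

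Most seriously, you have put the singularity and the source of $\log Q$ in the wrong place. Because $q''=kq-q'>Q-q$, the right denominator can only be small when $q$ is close to $Q$. In the $k=2$ region one even has $q''>(2Q-q')/3\ge Q/3$, so there is no singular line $q''=0$ there. With the correct density $1/(xy^2L_2(x,y))$ on $\mathscr{T}$, the only non-integrable behaviour is as $y\to0$, that is, for small \emph{middle} denominator $q$. There $q',q''\in(Q-q,Q]$ and $k\approx 2Q/q\to\infty$. Each term equals $q^{-2}Q^{-2}(1+O(q/Q))$, and the $\varphi(q)$ admissible values of $q'$ give $Q^{-2}\sum_{q\le z}\varphi(q)/q^2=\frac{6}{\pi^2Q^2}(\log z+\gamma)-\frac{\zeta'(2)}{\zeta(2)^2Q^2}+\dots$. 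This is where $\log Q$, $\gamma$ and $\zeta'(2)$ come from, not from a harmonic sum over small $q''$.

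Consequently your cutoff ``$q''\le T$, $T\asymp\sqrt Q$'' targets a harmless region, and the error sizes $T\log Q/Q^3$ and $\log Q/(TQ^2)$ are not derived from anything. The paper, following Hall, cuts on the middle denominator instead:
\begin{itemize}
\item It writes $S_1(Q)=\sum_{s\le Q}s^{-2}\sum_{Q-s<n\le Q,\,(n,s)=1}1/(nt)$ with $t=s\lfloor (Q+n)/s\rfloor-n$.
\item It treats $s\le z=(2Q+1)/K$ by the expansion above, with error $\asymp z^2/Q^4$.
\item For $z<s\le Q$ it splits into ranges $(s_{k+1},s_k]$, $2\le k<K$. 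On each range the inner sums become harmonic sums over $n$ coprime to $s$, evaluated as $\frac{\varphi(s)}{s}\log(v/u)+O(\tau(s)/u)$ and then summed over $s$ by partial summation, with error $\asymp K^2(\log Q)^2/Q^3$.
\item This leads to $\frac{24}{\pi^2(2Q+1)^2}\bigl(\log\frac K2+B-\frac2K-\frac{11}{6K^2}+O(K^{-3})\bigr)$.
\end{itemize}
Balancing the two errors with $K=[Q^{1/4}(\log Q)^{-1/2}]$ gives the $Q^{-5/2}\log Q$ term. One also needs the $z/Q$ term from small $s$ to cancel the $-2/K$ term, since each is of size $\asymp K^{-1}Q^{-2}$, far above the target error.

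A M\"obius/lattice-point version of your plan might be salvageable after correcting the summand. However, it would have to isolate the small-$q$ region and truncate in $k$ in essentially this way, and as written it does not.
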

\begin{proof}
    Following \cite[Lemma 2]{HallTenenbaum}, 
    \begin{align}
        S_1(Q) = \sum_{s=1}^Q s^{-2}\sum_{\substack{n=Q-s+1\\(n,s)=1}}^Q\frac{1}{nt},\label{eq:S1(Q) full sum}
    \end{align}
    where $t = t(n,s,Q) := s[(Q+n)/s]-n$. Choose an integer $K\geq 2$. For $2\leq k<K$, \mbox{define $s_{k}= (2Q+1)/k$,} and set $z=(2Q+1)/K$. Now split the sum $S_1(Q)$ into two \mbox{parts, $U_Q$} and $V_Q$, according to whether $s\leq s_K =z$. Hall picked $K=[Q^{1/4}\log^{-1/2}Q]$ at this point. Therefore, keep in mind that $z$ can be computed to only depend on $Q$ from now on. 
    
    We first consider $U_Q$. For $s\leq z$, put $n=Q-n'$, $t=Q-t'$ with $0\leq n',t'\leq s-1$, then 
    \begin{align*}
        \frac{1}{nt} = \frac{1}{Q^2}\frac{1}{(1-n'/Q)(1-t'/Q)}.
    \end{align*}
Upon expansion, we have
\begin{align*}
    \left|\frac{1}{nt}-\frac{1}{Q^2}-\frac{n'+t'}{Q^3}\right| &\leq \frac{s^2}{Q^4}+\frac{2s^2}{Q^4}\frac{1+s/Q}{(1-s/Q)}\\
    &\leq \frac{s^2}{Q^4}+\frac{2s^2}{Q^4}\times 6 = \frac{13s^2}{Q^4}
\end{align*}
when $Q\geq 6163$ (which forces $K\geq 3$). Therefore, substituting back in \eqref{eq:S1(Q) full sum}, 
\begin{align}
    U_Q = \frac{1}{Q^2}\sum_{s\leq z} \frac{\varphi(s)}{s^2}+\frac{1}{Q^3}\sum_{s\leq z} \frac{1}{s^2}\sum_{\substack{Q-s+1\leq n\leq Q\\(n,s)=1}}(n'+t')+R_{1,1},\label{eq:UQ}
\end{align}
    where 
    \begin{align*}
        |R_{1,1}|\leq \sum_{s\leq z}\frac{\varphi(s)}{s^2}\cdot \frac{13s^2}{Q^4}\leq \frac{6z^2}{Q^4}.
    \end{align*}
 The first term in \eqref{eq:UQ} can be written as
  \begin{align*}
      \sum_{s\leq z}\frac{\varphi(s)}{s^2} &= \sum_{s\leq z}\frac{1}{s^2}\sum_{d|s}\mu(d)\frac{s}{d}=\frac{1}{\zeta(2)}\sum_{s\leq z}\frac{1}{s}-\sum_{s\leq z}\frac{1}{s}\sum_{d>z/s}\frac{\mu(d)}{d^2}\\
      &=\frac{6}{\pi^2}(\log z+\gamma)-\sum_{2\leq d\leq z}\frac{\mu(d)}{d^2}\log d+R_{1,2}\\
      &=\frac{6}{\pi^2}(\log z+\gamma)-\frac{\zeta'(2)}{\zeta(2)^2}+R_{1,3},
  \end{align*}
  where 
  \begin{align*}
     |R_{1,2}|&\leq \frac{1}{z}+\frac{1}{z}\sum_{2\leq d\leq z}\frac{1}{d}
\shortintertext{and}
    |R_{1,3}|&\leq \frac{1}{z}+\frac{1}{z}\sum_{2\leq d\leq z}\frac{1}{d}+\frac{4\log z}{z}\leq \frac{6\log z}{z},
  \end{align*}
  when $Q\geq 6163$. Following the discussion in \cite{Hall1994}, the second term in \eqref{eq:UQ} is equivalent to
  \begin{align}
&\frac{1}{Q^3}\sum_{s\leq z} \frac{\varphi(s)}{s}+R_{1,4}\notag\\
=& \frac{z}{Q^3}\sum_{d\leq z}\frac{\mu(d)}{d^2} -\frac{1}{Q^3}\sum_{d\leq z}\frac{\mu(d)}{d}\{z/d\}+R_{1,4}=\frac{6z}{\pi^2Q^3}+R_{1,5},\label{eq:sum of phi(s)/s}
  \end{align}
  where 
  \begin{align*}
  |R_{1,4}|&\leq \frac{1}{Q^3}\sum_{s\leq z} s^{-2}\varphi(s) + \frac{1}{Q^3}\sum_{s\leq z} s^{-1} \tau(s)\leq \frac{\log z+(\log z)^2}{Q^3},
\shortintertext{and}
      |R_{1,5}|
  &\leq \frac{(\log z)^2+2\log z+2}{Q^3}.
  \end{align*}
  
  Combining all, we obtain 
  \begin{align}
      U_Q = \frac{6}{\pi^2Q^2}\left(\log z+\gamma-\frac{\zeta'(2)}{\zeta(2)}+\frac{z}{Q}\right)+R_{1,6},\label{eq:final estimate of U(Q)}
  \end{align}
  where
   \begin{align*}
     |R_{1,6}|\leq \frac{6z^2}{Q^4}+\frac{6\log z}{Q^2z}+ \frac{(\log z)^2+2\log z+2}{Q^3}.
  \end{align*}
  
  We now shift our attention to $V_Q$. Following the original proof, 
  \begin{align}
      V_Q=\sum_{z<s\leq Q}\frac{2}{s^3}\Big\{\frac{1}{k(s)-1}\sum_{\substack{n=Q-s+1\\(n,s)=1}}^Q\frac{1}{n}-\frac{1}{k(s)(k(s)-1)}\sum_{\substack{n=sk(s)-Q\\(n,s)=1}}^Q\frac{1}{n}\Big\},\label{eq:VQ}
  \end{align}
 where $k(s)=[(2Q+1)/s]$ and the right-hand inner sum is empty when $s \mid (2Q+1)$. \mbox{For $u\leq v$} positive integers, 
  \begin{align*}
      \sum_{\substack{n=u}^v} \frac{1}{n} = \frac{\varphi(s)}{s}\log \frac{v}{u}+R_{1,7}, 
  \end{align*}
  where 
  \[
  |R_{1,7}|\leq \frac{2\tau(s)}{u}.
  \]
In \eqref{eq:VQ}, take $u=Q-s+1,v = Q$ for the first inner sum, and take $u=sk(s)-Q,v=Q$ for the second inner sum respectively, we see that the contributions from the total error terms, call it $R_{1,8}$, is bounded by
\begin{align*}
    |R_{1,8}|&\leq \sum_{z<s\leq Q}\frac{2}{s^3}\Big\{\frac{1}{k(s)-1}\frac{2\tau(s)}{Q-s+1}-\frac{1}{k(s)(k(s)-1)}\frac{2\tau(s)}{sk(s)-Q}\Big\}\\
    &\leq \frac{16}{Q^2}\sum_{s>z}\frac{\tau(s)}{s^2}+\frac{32}{Q^3}\sum_{Q/2<s\leq Q}\frac{\tau(s)}{Q-s+1}+4\sum_{z<s\leq Q}\frac{\tau(s)}{s^2Q(Q-s+1)}.
\end{align*}
Using 
\begin{align*}
    \sum_{s>z}\frac{\tau(s)}{s^2} &= \sum_{d=1}^\infty\sum_{\substack{s>z\\d|s}}\frac{1}{s^2}\leq \frac{\log z+3}{z}
\shortintertext{and}   \sum_{Q/2<s\leq Q}\frac{\tau(s)}{Q-s+1}&\leq 2\sqrt{Q}\sum_{1\leq s\leq Q/2}\frac{1}{s}\leq  2\sqrt{Q}(1+\log Q),
 \end{align*}
 where the first inequality comes from trivially bounding $\tau(s)$ by $2\sqrt{s}$, we arrive at
\begin{align*}
     |R_{1,8}|\leq \frac{20(\log z+3)}{Q^2z}+\frac{64(1+\log Q)}{Q^{5/2}}.
\end{align*}
Therefore, we are left to approximate the main term in \eqref{eq:VQ}, which is
\begin{align}
    M_{V,Q}:=\sum_{z<s\leq Q}\frac{2}{s^4}\Big\{\frac{\varphi(s)}{k(s)-1}\log\frac{Q}{Q-s+1}-\frac{\varphi(s)}{k(s)(k(s)-1)}\log\frac{Q}{sk(s)-Q}\Big\}.\label{eq:M(V,Q)}
\end{align}

Split the sum into ranges $(s_{k+1},s_k]$ for $2\leq k<K$, so $k(s)=k$. Using \eqref{eq:sum of phi(s)/s} and partial summation, we obtain
\begin{align}
    \sum_{s_{k+1}<s\leq s_k}\frac{\varphi(s)}{s^4}\log\frac{Q}{Q-s+1} = \frac{6}{\pi^2}\int_{s_{k+1}}^{s_k}s^{-3}\log\frac{Q}{Q-s+1}\ ds+R_{1,9},\label{eq:first summand in M(V,Q)}
\end{align}
where
\begin{align*}
|R_{1,9}|&\leq 2(\log s_k+2)s_{k+1}^{-3}\log\frac{Q}{Q-s_{k}+1}\\
&\quad+\int_{s_{k+1}}^{s_k}3(\log s+2)s^{-4}\log\frac{Q}{Q-s+1}+(\log s+2)\frac{s^{-3}}{Q-s+1}\ ds\\
&\leq \frac{62(\log Q)^2}{Qs_k^2}
\end{align*}
by bounding $\log Q/(Q-s+1)$ by $3s/Q$ for $k\geq 3$ and by $\log (Q+1/2)$ when $k=2$. Similarly, 
\begin{align}
     \sum_{s_{k+1}<s\leq s_k}\frac{\varphi(s)}{s^4}\log\frac{Q}{sk-Q} = \frac{6}{\pi^2}\int_{s_{k+1}}^{s_k}s^{-3}\log\frac{Q}{sk-Q}\ ds+R_{1,10},\label{eq:second summand in M(V,Q)}
\end{align}
where
\begin{align*}
   |R_{1,10}| \leq \frac{16k(\log Q)^2}{Qs_k^2}.
\end{align*}
Substituting \eqref{eq:first summand in M(V,Q)} and \eqref{eq:second summand in M(V,Q)} in \eqref{eq:M(V,Q)} together with taking $R_{1,8}$ into account, we arrive at
\begin{align}
    V_Q = \frac{12}{\pi^2}\int_{z}^{Q+1/2}\Big\{\frac{1}{(k(s)-1)}\log\frac{Q}{Q-s+1}-\log\frac{Q}{sk(s)-Q}\Big\}s^{-3}\ ds+R_{1,11},\label{eq:VQ main term + error term}
\end{align}
where 
\begin{align*}
    |R_{1,11}|&\leq \sum_{k=2}^{K-1}\frac{124(\log Q)^2}{(k-1)Qs_k^2}+\frac{32k(\log Q)^2}{k(k-1)Qs_k^2}\\
    &\leq 39K^2Q^{-3}(\log Q)^2.
\end{align*}

Denote the main term in \eqref{eq:VQ main term + error term} by $I_Q$. Upon substituting $s=(2Q+1)/x$, we have
\begin{align*}
    (2Q+1)^2I_Q &= \frac{12}{\pi^2}\int_2^K\Big\{\frac{x}{[x]-1}\log\frac{Qx}{(Q+1)x-2Q-1}\\
    &\quad+\frac{x}{[x]([x]-1)}\log\frac{(2Q+1)[x]-Qx}{Qx}\Big\} \ dx.
\end{align*}

We have 
\begin{align*}
    \log\frac{Qx}{(Q+1)x-2Q-1} = \log\frac{x}{x-2}-\log \left(1+\frac{x-1}{Q(x-2)}\right) = \log\frac{x}{x-2} + R_{1,12}
\end{align*}
for $x\geq 3$, where 
\begin{align*}
|R_{1,12}|&\leq \log (1+2/Q)\leq 2/Q,
\shortintertext{and}
\log\frac{(2Q+1)[x]-Qx}{Qx} &= \log \left(2\frac{[x]}{x}-1\right)+R_{1,13}
\end{align*}
for $x\geq 2$, where
\[
|R_{1,13}|\leq 2/Q.
\]
Therefore, 
\begin{align*}
    I_Q = \frac{12}{\pi^2(2Q+1)^2}\int_2^K \left(f(x)+\frac{2}{x}\right)\ dx + R_{1,14},
\end{align*}
where 
\[
f(x):=-\frac{2}{x}+\frac{x}{[x]-1}\log\frac{x}{x-2}+\frac{x}{[x]([x]-1)}\log \left(2\frac{[x]}{x}-1\right),
\]
and 
\begin{align*}
    |R_{1,14}|&\leq \frac{12}{\pi^2(2Q+1)^2} \int_3^K\left\{\frac{x}{[x]-1}\frac{2}{Q}+\frac{x}{[x]([x]-1)}\frac{2}{Q}\right\}\ dx\\
    &\leq \frac{18(K-3+\log Q)}{\pi^2Q^3}+\frac{27+18\log Q}{\pi^2Q^3},
\end{align*}
where the last summand in the last line comes from approximating the integral by $f(x)+2/x$ over $2\leq x\leq 3$. 
Putting everything together, we obtain
\begin{align}
    V_Q = \frac{24}{\pi^2(2Q+1)^2}\left(\log\frac{K}{2}+B-B(K) \right)+R_{1,15},\label{eq:V(Q) final estimate}
\end{align}
where
\begin{align*}
    B = \frac{1}{2}\int_2^\infty f(x)dx, \quad B(K) = \frac{1}{2}\int_K^\infty f(x)\ dx,
\end{align*}
and
\begin{align*}
|R_{1,15}|&\leq 39K^2Q^{-3}(\log Q)^2+\frac{18K-27+36\log Q}{\pi^2Q^3}+\frac{24\log K}{\pi^2(2Q+1)^2}\\
&\leq \frac{39\log Q}{Q^{5/2}}+\frac{18}{\pi^2Q^{11/4}(\log Q)^{1/2}}+\frac{36\log Q-26}{\pi^2Q^3}.
\end{align*}
The constant $B$ was computed in \cite{Hall1994} exactly. To estimate $B(K)$, a calculation shows that \mbox{if $x\geq 3$,} then 
\[
f(x)=\frac{4}{x^2}+\left(\frac{20}{3}+4\theta(1-\theta)\right)\frac{1}{x^3}+R_{1,16},
\]
where $|R_{1,16}|\leq 1000x^{-4}$. Therefore,
\begin{align}
    B(K) &= \frac{1}{2}\int_K^\infty\frac{4}{x^2}\ dx+\frac{1}{2}\int_K^\infty\left(\frac{20}{3}+4\theta(1-\theta)\right)x^{-3}\ dx+R_{1,17}\notag\\
    &=\frac{2}{K} +\frac{11}{6K^2}-2\int_K^\infty B_2(\{x\})x^{-3}\ dx+R_{1,17}\notag\\
    & = \frac{2}{K} +\frac{11}{6K^2}+R_{1,18},\label{eq:estimate of B(K)}
\end{align}
where 
\begin{align*}
|R_{1,17}|&\leq \frac{1}{2}\int_K^\infty \frac{1000}{x^4}\ dx = \frac{500}{3K^3}, 
\shortintertext{and} 
|R_{1,18}|&\leq \frac{500}{3K^3}+\frac{\sqrt{3}}{54K^3}\leq \frac{167}{K^3}.
\end{align*}
Substituting \eqref{eq:estimate of B(K)} back into the estimate for $V(Q)$ in \eqref{eq:V(Q) final estimate}, and combining with the asymptotic formulas for $U_Q$ in \eqref{eq:final estimate of U(Q)}, we finally arrive at
\begin{align*}
    S_1(Q) &= \frac{6}{\pi^2Q^2}\left(\log z+\gamma-\frac{\zeta'(2)}{\zeta(2)}+\frac{z}{Q}\right)+\frac{24}{\pi^2(2Q+1)^2}\left(\log\frac{K}{2}+B-\frac{2}{K}-\frac{11}{6K^2} \right)\\
    &\quad+R_{1,19},
\end{align*}
where 
\begin{align*}
    R_{1,19}&\leq \frac{6z^2}{Q^4}+\frac{6\log z}{Q^2z}+ \frac{(\log z)^2+2\log z+2}{Q^3}\\
    &\quad+ \frac{39\log Q}{Q^{5/2}}+\frac{18}{\pi^2Q^{11/4}(\log Q)^{1/2}}+\frac{36\log Q-26}{\pi^2Q^3}+\frac{24}{\pi^2(2Q+1)^2}\cdot\frac{167}{K^3}.
\end{align*}
Substituting the values of $z$ and $K$, we will obtain Theorem \ref{thm: S1(Q)}.
\end{proof}


\section{An Asymptotic formula for \texorpdfstring{$S_r(Q)$}{Sr(Q)}}\label{sec: Section 5}
In this section, our goal is to prove the asymptotic formula of $S_r(Q)$ for $r\geq 2$ presented in Theorem~\ref{thm: Theorem for Sr(Q)}. The result is as follows.

\begin{thm}\label{thm: asymptotic for Sr(Q)}
    Let $r\geq 2$. Define
    \[
    I_r := \iint\limits_{\mathscr{T}}\frac{dxdy}{xyL_r(x,y)L_{r+1}(x,y)},
    \]
    where $\mathscr{T}$ denotes the Farey triangle defined by $0<x,y\leq 1$ and $x+y>1$ in the plane, \mbox{and $L_r(x,y)$} is defined in \eqref{def: Li(x,y) def 1} and \eqref{def: Li(x,y) def 2}. Then, for $Q$ such that $Q/\log Q\geq 2^{(r+3)^2}$, we have
    \[
    S_r(Q) = \frac{6I_r}{\pi^2Q^2} + R_r,
    \]
    where 
    \begin{align*}
|R_r|
&\leq
\left(3\cdot 2^{3r+6}
+3\cdot 2^{3r+8}
+\frac{9\cdot 2^{3r+8}r}{\pi^2}\right)
\frac{\log^{1/(r+3)}Q}{Q^{2+1/(r+3)}} +7\cdot 2^{3r+6}\frac{(1+4\log Q)}{(\log Q)^{1/(r+3)}Q^{3-1/(r+3)}}\\
& 
+2^{3r+6}\frac{1}{(\log Q)^{1/(r+3)}Q^{3-1/(r+3)}} +\frac{2^{3r+6}r}{Q^3}
\left(\log Q+2\right).
\end{align*}
\end{thm}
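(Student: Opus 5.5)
We will follow the approach of \cite{BoCoZaha2001}, which relates consecutive Farey fractions to the BCZ map on the Farey triangle, and make every step effective. Consecutive denominators $(q_j,q_{j+1})$ in $F_Q$ determine all later ones through the BCZ map
\[
T(x,y)=\Bigl(y,\ \Bigl[\tfrac{1+x}{y}\Bigr]y-x\Bigr),
\]
applied to $(q_j/Q,q_{j+1}/Q)\in\mathscr{T}$. Writing $L_1(x,y)=x$, $L_2(x,y)=y$ and, in general, $L_i(x,y)$ for the first coordinate of $T^{i-1}(x,y)$ (which is how we read \eqref{def: Li(x,y) def 1}, \eqref{def: Li(x,y) def 2}), we get $q_{j+i-1}=QL_i(q_j/Q,q_{j+1}/Q)$. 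Since $\ell_j=1/(q_jq_{j+1})$, this gives
\[
S_r(Q)=\sum_{j}\frac{1}{q_jq_{j+1}q_{j+r}q_{j+r+1}}
=\frac{1}{Q^4}\sum_{\substack{(a,b)\in Q\mathscr{T}\cap\mathbb{Z}^2\\ \gcd(a,b)=1}}\frac{1}{xyL_r(x,y)L_{r+1}(x,y)}\Big|_{(x,y)=(a/Q,b/Q)}.
\]
The plan is therefore to count primitive lattice points in $Q\mathscr{T}$ weighted by the function $F_r=1/(xyL_rL_{r+1})$. The expected main term is $\frac{6}{\pi^2}Q^2\iint_{\mathscr T}F_r$, which yields $6I_r/(\pi^2Q^2)$.

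\textbf{Step 1: partition $\mathscr{T}$.} Split $\mathscr{T}$ into the convex polygons $\mathscr{T}_{\mathbf k}$ on which the integer parts $k_i=[(1+L_{i-1})/L_i]$, $i\le r$, are all constant. On each such piece every $L_i$ is a linear form in $(x,y)$ with integer coefficients, so $F_r$ is smooth there. The difficulty is that $F_r$ blows up where some $L_i$ is small, i.e. where some $k_i$ is large. We will fix a cutoff $M$ and keep only the pieces with all $k_i\le M$. Counting the admissible tuples needs a combinatorial bound, which we take to be of the order $2^{3r}$-type (possibly growing with $M$); this is our reading of the source of the $2^{3r+6}$ factors in the stated error.

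\textbf{Step 2: good pieces.} For each piece with all $k_i\le M$, we count primitive points with weight $F_r$ by M\"obius inversion. This gives
\[
\sum_{d\le Q}\mu(d)\sum_{(a,b)\in (Q/d)\mathscr{T}_{\mathbf k}}F_r\!\left(\frac{da}{Q},\frac{db}{Q}\right),
\]
and we compare each inner lattice sum to its integral by the standard estimate: area term plus an error controlled by perimeter times $\sup F_r$ plus total variation. Summing over $d$ produces the factor $6/\pi^2$, with error terms carrying a factor $\log Q$. On a piece with all $k_i\le M$, the linear forms satisfy $L_i\gg M^{-O(1)}$, so the errors are polynomial in $M$ and come with an extra factor $Q^{-1}$.

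\textbf{Step 3: bad region.} Bound the contribution of the region where some $k_i>M$. There $L_i<2/(k_i-1)$ (more precisely, the region where $L_{i}$ is small has area $O(1/k_i^{2})$-type), and the weight $\ell_{j}\ell_{j+r}$ can be controlled directly: use $\ell\le 1/(q q')$ together with $q_{j+i}+q_{j+i-1}>Q$. This shows that both the lattice sum and the integral over the region are $\ll 2^{O(r)}/(MQ^2)$, up to logarithmic factors.

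\textbf{Step 4: balance.} Summing the two contributions, the good pieces give roughly $2^{3r}M^{r+2}\log Q/Q^3$ and the bad region gives $2^{3r}/(MQ^2)$. Choosing $M\asymp (Q/\log Q)^{1/(r+3)}$ balances the two and yields the stated shape $\log^{1/(r+3)}Q/Q^{2+1/(r+3)}$. The hypothesis $Q/\log Q\ge 2^{(r+3)^2}$ guarantees $M\ge 2^{r+3}$, which is what makes the crude estimates valid.

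\textbf{Main obstacle.} The hardest step will be the explicit control in Steps 1 and 3. We need an effective bound on the number of pieces $\mathscr{T}_{\mathbf k}$ with bounded entries and on their perimeters, and a uniform lower bound for $L_r$ and $L_{r+1}$ on these pieces. Our first attempt would be to exploit the known structure of BCZ orbits: $k_i\ge 2$, and consecutive large indices are separated by runs of $2$'s. With this, an induction on $r$ should bound each $L_i$ from below by $M^{-1}2^{-i}$.
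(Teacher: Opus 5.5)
Your overall architecture matches the paper's: pieces indexed by $\mathbf k$, M\"obius lattice counting on each piece, truncation, and balancing at $M\asymp (Q/\log Q)^{1/(r+3)}$. However, Step 2 rests on a false claim, and the ingredient that actually makes the argument work is missing.

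\textbf{Step 2 fails on the good pieces.} Fixing $k_1,\dots,k_r\le M$ gives $L_i\ge 1/(k_i+1)$ only for the interior forms $L_1=y,\dots,L_r$. This uses the paper's indexing $L_0=x$, $L_1=y$; with your shifted indexing, $1/(xyL_rL_{r+1})$ would encode $\ell_j\ell_{j+r-1}$, not $\ell_j\ell_{j+r}$. The outer forms $x$ and $L_{r+1}$ become small exactly when $k_0$ or $k_{r+1}$ is large, and the piece does not constrain those indices.

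Concretely, take $(x,y)=(\varepsilon,1-\varepsilon/2)$. Then $L_i=1-(2i-1)\varepsilon/2$ for $1\le i\le r+1$, so $\mathbf k=(1,2,\dots,2)$ for every $\varepsilon<1/(2r)$, while $F_r\asymp 1/\varepsilon\to\infty$. The same example shows your remark $k_i\ge 2$ is false. So $\sup F_r=\infty$ on good pieces, and the lattice-point estimate gives nothing there. These points also do not lie in your bad region $\{\text{some } k_i>M\}$. The bound $L_i\gg M^{-1}2^{-i}$ you hope to prove by induction fails precisely for $x$ and $L_{r+1}$.

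\textbf{What the paper uses instead.} On each piece it keeps only $\Omega_{\mathbf k,M}$, where $\min(x,y,L_{\mathbf k,r},L_{\mathbf k,r+1})\ge Q/M$. The indispensable input is Lemma 5 of \cite{BoCoZaha2001}: among $q_j,q_{j+1},q_{j+r},q_{j+r+1}$, at least three are $\ge 2^{-r-2}Q$. This gives
$\|f_{r,\mathbf k}\|_\infty\le 2^{3r+6}M/Q^4$ and $\|\nabla f_{r,\mathbf k}\|_\infty\le 3\cdot 2^{3r+6}k_1\cdots k_rM^2/Q^5$.
That is where $2^{3r+6}=(2^{r+2})^3$ comes from; it is not a count of tuples. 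It is also where the $M^{r+2}$ in your Step 4 comes from.

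Without the lemma, cutoffs $\ge 1/M$ on all four forms plus $x+y>1$ and $L_r+L_{r+1}>1$ give only $F_r\le 4M^2$. The good-piece error is then of order $M^{r+3}\log Q/Q^3$, and the balance yields at best the exponent $1/(r+4)$, not $1/(r+3)$.

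\textbf{The discarded regions also need the lemma.} It shows that points with $\min(q_j,q_{j+1},q_{j+r},q_{j+r+1})<Q/M$ contribute at most $2^{3r+8}/(MQ^2)$. It also bounds the $k$-tail by $\frac{2^{3r+6}r}{Q^3}\sum_{q\le 2Q/T}\varphi(q)/q$. Your Step 3 tool ($\ell\le 1/(qq')$ with $q+q'>Q$) cannot replace it. When $q_{j+1}$ or an interior denominator is small, you still need $q_{j+r}$ and $q_{j+r+1}$ to be $\gg 2^{-r}Q$, which is exactly what that lemma supplies.

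\textbf{The explicit constants.} The statement is an explicit inequality, but your sketch only tracks $2^{O(r)}$, $M^{O(1)}$ and logarithmic losses. Recovering the stated constants requires:
\begin{itemize}
\item the explicit lattice-point lemma, namely the Steiner-formula term $7\|f\|_\infty(1+\mathrm{length}(\partial\Omega))$ and M\"obius inversion with $R=Q$;
\item the cutoff bounds above, with $M=T=\lceil (Q/\log Q)^{1/(r+3)}\rceil$;
\item a bound for $|I_r-\iint_{\mathscr D(T,M)}f_r|$.
\end{itemize}
The paper gets the last bound by applying its discrete estimate at an auxiliary level $N\to\infty$. Your plan of bounding the continuous bad region directly is a legitimate alternative, but it needs the continuous analogue of the same three-out-of-four lemma.
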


We follow the proof structure in Sections~3--7 of \cite{BoCoZaha2001}, while obtaining concrete bounds for the error terms in all necessary lemmas.

\subsection{Notation in \cite{BoCoZaha2001}}

We follow the same notation in \cite{BoCoZaha2001}. We introduce the functions $L_i$ defined on $\mathscr{T} = \{(x,y): 0 < x, y \leq 1, x+y>1\}$ by
\begin{align}
   L_0(x,y) = x, \quad L_1(x,y) = y\label{def: Li(x,y) def 1} 
\end{align}
and
\begin{align}\label{def: Li(x,y) def 2}
    L_i(x,y) = \begin{cases} 
    \left[ \dfrac{1+L_{i-2}(x,y)}{L_{i-1}(x,y)}\right]L_{i-1}(x,y)-L_{i-2}(x,y)  & \text{ if } i\geq 2
       \\
    \left[ \dfrac{1+L_{i+2}(x,y)}{L_{i+1}(x,y)}\right]L_{i+1}(x,y)-L_{i+2}(x,y)  & \text{ if } i\leq -1.
   \end{cases}
\end{align}
Additionally, consider the function
\begin{align}
    f_r(x,y) = \dfrac{1}{xyL_r(x,y)L_{r+1}(x,y)}.\label{def:fr(x,y)}
\end{align}
    
Now, for a fixed $\mathbf{k}\in\N^r$, we define
\begin{align*}
    L_{\mathbf{k}, 0}(x,y) = x, L_{\mathbf{k}, 1}(x,y) = y, 
\end{align*}
then recursively, for $i \in \{ 2, \dots, r+1\}$, the linear function
\begin{align}\label{def: L[k,i](x,y)}
    L_{\mathbf{k},i}(x,y)= k_{i-1}L_{\mathbf{k}, i-1}(x,y) - L_{\mathbf{k}, i-2}(x,y), \quad (x,y) \in \mathbb{R}^2.
\end{align}
We also want to consider the set of indices
\begin{align*}
    \mathscr{L}_{\mathbf{k}} := \{ 1, \dots, N(Q)\} \cap \{j;q_{j+1} = L_{\mathbf{k},i}(q_j, q_{j+1}) \text{ for all } i \in \{2, \dots, r+1\}\}.
\end{align*}
In other words,
    \[
    \mathscr{L}_{\mathbf{k}} =\{\text{index $j$}: q_{j+i+1} = k_iq_{j+i}-q_{j+i-1}\},
    \]
    where $k_i = [\frac{Q+q_{i-1}}{q_i}]$.

For each $r \geq 0$ and $\mathbf{k} \in (\N^{*})^r$,
\begin{align*}
    S_{r,\mathbf{k}} &= \sum_{j \in \mathscr{L}_\mathbf{k}} (\gamma_{j+1} - \gamma_j)(\gamma_{j+r+1} - \gamma_{j+r}) = \sum_{j \in \mathscr{L}_\mathbf{k}} \dfrac{1}{q_jq_{j+1}}\cdot\dfrac{1}{q_{j+r}q_{j+r+1}}\\
    &= \sum_{j \in \mathscr{L}_\mathbf{k}} \dfrac{1}{q_jq_{j+1}L_{\mathbf{k},r}(q_j,q_{j+1})L_{\mathbf{k},r+1}(q_j,q_{j+1})}.
\end{align*}
Then, we have
\begin{align*}
    S_r(Q) = \sum_{\mathbf{k} \in (\N^*)^r} S_{r,\mathbf{k}}(Q).
\end{align*}
These sums can be truncated to
\begin{align*}
    S_{r,T}(Q) = \sum_{1\leq k_1, k_2, \dots, k_r \leq T} S_{r,\mathbf{k}}(Q),
\end{align*}
where $T \geq 1$. Also, define
\begin{align*}
    f_{r, \mathbf{k}}(x,y) = \dfrac{1}{xyL_{\mathbf{k},r}(x,y)L_{\mathbf{k},r+1}(x,y)}. 
\end{align*}

\subsection{Preliminary lemmas}
Let $\Omega\subseteq \R^2$ be a convex bounded region with rectifiable boundary~$\partial\Omega$ and assume that $f$ is a $C^1$ function on $\Omega$. Denote $\|f\|_\infty =  \sup_{(x,y)\in\Omega} |f(x,y)|$ and set
\[
S = S(f,\Omega) = \sum_{(a,b)\in\Omega\cap\Z^2} f(a,b).
\]

\begin{lem}\textbf{(Effective version of Lemma 1 in \cite{BoCoZaha2001})}\label{lem: Lemma 1}
    Suppose that $\Omega$ and $f$ are as above. Then
    \begin{align*}
    \left|S-\iint\limits_\Omega f(x,y)dxdy\right|\leq \left(\Big\|\frac{\partial f}{\partial x}\Big\|_\infty+\Big\|\frac{\partial f}{\partial y}\Big\|_\infty\right)\text{Area}(\Omega)+7\|f\|_\infty (1+\text{length}(\partial\Omega)).
    \end{align*}
\end{lem}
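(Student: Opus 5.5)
We will prove Lemma~\ref{lem: Lemma 1} by comparing the lattice sum with the integral one unit square at a time. For each $(a,b)\in\Z^2$ let $Q_{a,b}=[a,a+1)\times[b,b+1)$. We split the lattice points of $\Omega$ into two classes. A point is \emph{interior} if $Q_{a,b}\subseteq\Omega$. A point is \emph{boundary} if $(a,b)\in\Omega$ but $Q_{a,b}\not\subseteq\Omega$. We also collect the squares $Q_{a,b}$ that meet $\Omega$ without having their corner $(a,b)$ in $\Omega$. The integral over $\Omega$ is then the integral over the union of the interior squares plus the integral over the remaining portions $\Omega\cap Q_{a,b}$, where $Q_{a,b}$ meets $\partial\Omega$.

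For an interior point we write
\[
f(a,b)-\iint_{Q_{a,b}}f=\iint_{Q_{a,b}}\bigl(f(a,b)-f(x,y)\bigr)\,dx\,dy .
\]
By the mean value theorem along the path from $(a,b)$ to $(x,a\text{-row})$ and then to $(x,y)$, convexity keeps these segments inside $Q_{a,b}\subseteq\Omega$, so
\[
|f(a,b)-f(x,y)|\le \|\partial_x f\|_\infty+\|\partial_y f\|_\infty .
\]
Summing over the interior squares, which are disjoint and contained in $\Omega$, gives a total error of at most $(\|\partial_x f\|_\infty+\|\partial_y f\|_\infty)\,\mathrm{Area}(\Omega)$. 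This is the first term of the bound.

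All remaining contributions come from unit squares that meet $\partial\Omega$. These are the boundary lattice points, each contributing at most $\|f\|_\infty$ to $S$, and the leftover pieces $\Omega\cap Q_{a,b}$, each contributing at most $\|f\|_\infty$ to the integral. The error from them is therefore at most $2\|f\|_\infty$ times the number $M$ of unit squares meeting $\partial\Omega$.

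The key step is a counting bound of the form $M\le c\,(1+\mathrm{length}(\partial\Omega))$. Proving it with a constant that keeps $2c\le 7$ is the main obstacle. We plan the standard argument. Cut the rectifiable curve $\partial\Omega$ into $\lceil \mathrm{length}(\partial\Omega)\rceil$ arcs of length at most $1$, or a single arc if the length is less than $1$. An arc of length at most $1$ lies in a closed disc of radius $1/2$ around its midpoint. It also lies in an axis-parallel box of width plus height at most $1$, so it meets at most $4$ unit squares in a $2\times 2$ pattern, and a count of $3$ suffices generically.

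This gives $M\le 4\lceil L\rceil\le 4(1+L)$ with $L=\mathrm{length}(\partial\Omega)$. The resulting constant $8$ is slightly too large. To reach $7$ we will sharpen the count in one of two ways. The first option is to note that consecutive arcs share a square, which allows the count $3\lceil L\rceil+1$. The second option is to bound interior and boundary contributions jointly, counting each boundary square once with weight $\|f\|_\infty$ and then accounting separately for the square at the corner. Either gives a total of at most $7\|f\|_\infty(1+L)$. Combining this with the interior estimate yields the lemma.
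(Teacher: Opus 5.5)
Your argument is sound in outline, but it differs from the paper's in how the boundary squares are counted, and its last step has a small-perimeter gap; there is also one false (but harmless) geometric remark.

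\textbf{Comparison with the paper.} The paper takes the interior (derivative) estimate from the original proof in BCZ. It bounds the number $E(\Omega)$ of unit squares meeting $\partial\Omega$ by Steiner's formula: every such square lies in the $\sqrt2$-neighbourhood of $\partial\Omega$, whose area is at most $2\sqrt2\,L+2\pi\le 7(1+L)$ by convexity. It then implicitly charges a single $\|f\|_\infty$ per such square, which is legitimate when $f\ge 0$ (the case in all its applications).

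Your chained-arc count differs. The square containing the common endpoint of consecutive arcs is already counted, so each arc after the first adds at most $3$ new squares, giving $M\le 3\lceil L\rceil+1$. This is more elementary and does not use convexity. Because you charge $2\|f\|_\infty$ per square to cover sign-changing $f$, you genuinely need its smaller additive constant. Doubling the Steiner bound would give $4\sqrt2L+4\pi$, which exceeds $7(1+L)$ for $L<4$.

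\textbf{The false remark.} An arc of length at most $1$ need not lie in a box with width plus height at most $1$: a diagonal segment of length $1$ gives $\sqrt2$. What your disc of radius $1/2$ actually gives is width $\le 1$ and height $\le 1$ separately. That alone yields at most $2\times 2=4$ half-open squares, so your conclusion survives.

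\textbf{The gap.} Your closing claim that Option 1 gives $7\|f\|_\infty(1+L)$ fails for small perimeter. For $0<L<1$ you only get $2M\le 8$, which exceeds $7(1+L)$ when $L<1/7$. The count $M=4$ is attained, for instance by a tiny disc centred at a lattice point.

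The repair is easy. For $L<1$, charge lattice points and the integral separately. The lattice-point part is at most $M\|f\|_\infty\le 4\|f\|_\infty$. The integral over the boundary pieces is at most $\|f\|_\infty\,\mathrm{Area}(\Omega)\le \|f\|_\infty L^2/(4\pi)<\|f\|_\infty$. For $L\ge 1$, $6\lceil L\rceil+2<6L+8\le 7(1+L)$.

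Option 2 as written is too vague to check and should be replaced by this case split. Also, the path in your interior estimate should go from $(a,b)$ to $(x,b)$ and then to $(x,y)$.
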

\begin{proof}
    Denote $R_{a,b}=[a,a+1]\times[b,b+1]$ for $a,b\in\Z$. We have 
    \[
     \left|S-\iint\limits_\Omega f(x,y)dxdy\right|\leq \|f\|_\infty E(\Omega),
    \]
    where
    \[
    E(\Omega) := \sum_{\substack{a,b\in\Z\\ R_{a,b}\cap\partial\Omega\neq \emptyset}} 1 = \text{Area}(\bigcup_{\substack{a,b\in\Z\\ R_{a,b}\cap\partial\Omega\neq \emptyset}} R_{a,b}).
    \]
  For any unit square $R_{a,b}$ to overlap with $\partial\Omega$, we will have 
  \[
  \text{dist}(z,\partial\Omega)\leq \sqrt{1^2+1^2}=\sqrt{2}.
  \]
  Therefore, by the Steiner Formula (for example, see \cite{Morvan}), we have
  \begin{align*}
      E(\Omega)\leq 2\times\sqrt{2}\times\text{length}(\partial\Omega)+\pi(\sqrt{2})^2.
  \end{align*}
  Combining with the rest of the proof in \cite[Lemma 1]{BoCoZaha2001}, we obtain Lemma \ref{lem: Lemma 1}. 
\end{proof}

Define 
\begin{align}
S' = S'(f,\Omega) = \sum_{\substack{(a,b)\in\Omega\cap\Z^2\\\gcd(a,b)=1}} f(a,b).\label{def:S'}
\end{align}

\begin{lem}\textbf{(Effective version of Lemma 2 and Corollary 2 in \cite{BoCoZaha2001})}\label{lem: Lemma 2}
    Suppose that $\Omega$ and $f$ are as above. In addition, suppose $\Omega\subseteq [1,R]\times [1,R]$. Then
    \begin{align*}
    \left|S'-\frac{6}{\pi^2}\iint\limits_\Omega f(x,y)dxdy\right|&\leq \left(\Big\|\frac{\partial f}{\partial x}\Big\|_\infty+\Big\|\frac{\partial f}{\partial y}\Big\|_\infty\right)\text{Area}(\Omega)\log R\\
    &\quad+7\|f\|_\infty (R+4R\log R)+\|f\|_\infty R.
    \end{align*}
\end{lem}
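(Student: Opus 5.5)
We will prove Lemma~\ref{lem: Lemma 2} by Möbius inversion, which reduces the coprime sum $S'$ to a family of unrestricted lattice sums. Each of these is then controlled by Lemma~\ref{lem: Lemma 1}. Writing $\sum_{d\mid \gcd(a,b)}\mu(d)$ for the coprimality indicator and noting that $\Omega\subseteq[1,R]^2$ forces $d\leq R$, we get
\begin{align*}
S'=\sum_{d\leq R}\mu(d)\sum_{(a',b')\in\Omega_d\cap\Z^2} f_d(a',b'),
\end{align*}
where $\Omega_d=\frac1d\Omega$ and $f_d(x,y)=f(dx,dy)$. The region $\Omega_d$ is again convex and bounded, with $\mathrm{Area}(\Omega_d)=\mathrm{Area}(\Omega)/d^2$ and $\mathrm{length}(\partial\Omega_d)=\mathrm{length}(\partial\Omega)/d$. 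Moreover $\|f_d\|_\infty\le\|f\|_\infty$ and $\|\partial_x f_d\|_\infty = d\|\partial_x f\|_\infty$, and similarly for $\partial_y$. The change of variables gives $\iint_{\Omega_d} f_d=\frac1{d^2}\iint_\Omega f$.

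The next step is to apply Lemma~\ref{lem: Lemma 1} to each inner sum with $d\le R$. The $d$-th error is at most
\begin{align*}
\left(\|\partial_x f\|_\infty+\|\partial_y f\|_\infty\right)\frac{\mathrm{Area}(\Omega)}{d}+7\|f\|_\infty\left(1+\frac{\mathrm{length}(\partial\Omega)}{d}\right).
\end{align*}
Since $\Omega\subseteq[1,R]^2$ is convex, its perimeter is at most that of the square, namely $4(R-1)\le 4R$. Summing over $d\le R$ with $\sum_{d\le R}1/d\le 1+\log R$ gives a first error term of order (derivatives)$\cdot\mathrm{Area}\cdot(1+\log R)$. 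The remaining error is at most $7\|f\|_\infty(R+4R(1+\log R))$.

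The main term is $\iint_\Omega f\sum_{d\le R}\mu(d)/d^2$. Replacing the partial sum by $6/\pi^2$ costs $|\iint_\Omega f|\sum_{d>R}d^{-2}$. We bound this by $\|f\|_\infty R^2\cdot\frac1R$, which accounts for the extra term $\|f\|_\infty R$ in the statement.

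The main obstacle is constant bookkeeping, since the stated bound has $\log R$ rather than $1+\log R$ and $4R\log R$ rather than $4R(1+\log R)$. We will try two ways to absorb the extra constants. The first is to handle $d=1$ separately and use $\sum_{2\le d\le R}1/d\le\log R$. The second is to observe that for $d>R/\,$(something) the set $\Omega_d\cap\Z^2$ is nearly trivial, so the perimeter contribution can be taken as $\mathrm{length}(\partial\Omega)\le 4(R-1)$, whose $-4$ offsets the extra $7\|f\|_\infty$ terms. Failing that, we will note that $\mathrm{Area}(\Omega)\le (R-1)^2$ and $\mathrm{length}\le 4(R-1)$ give enough slack to absorb the $d=1$ constants into the stated bound. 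The tail estimate $\sum_{d>R}d^{-2}\le 1/R$ with $\mathrm{Area}\le R^2$ is routine.
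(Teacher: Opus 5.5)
Your route is exactly the paper's: Möbius inversion over $d\le R$, Lemma~\ref{lem: Lemma 1} on each dilate $\frac1d\Omega$, the tail $\sum_{d>R}d^{-2}$ costing $\|f\|_\infty R$, and $\mathrm{length}(\partial\Omega)\le 4R$ by convexity. The paper simply writes $\log R$ wherever $\sum_{d\le R}1/d$ appears, so it glosses over the very point you flag.

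As written, however, your argument proves only the weaker bound with $1+\log R$ in both places, and none of your three patches repairs this.
\begin{itemize}
\item Treating $d=1$ separately changes nothing. Lemma~\ref{lem: Lemma 1} still charges $(\|\partial_xf\|_\infty+\|\partial_yf\|_\infty)\mathrm{Area}(\Omega)+7\|f\|_\infty(1+\mathrm{length}(\partial\Omega))$ for that term, so the ``$1$'' in $1+\log R$ just moves there.
\item Replacing $4R$ by $4(R-1)$ recovers only $28\|f\|_\infty(1+\log R)$. The excess of your $7\|f\|_\infty(R+4R(1+\log R))$ over the target $7\|f\|_\infty(R+4R\log R)$ is $28\|f\|_\infty R$, and the final $\|f\|_\infty R$ on the right is already spent on the tail.
\item There is no area slack. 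The right-hand side contains $\mathrm{Area}(\Omega)$ itself, and the gradient excess $(\|\partial_xf\|_\infty+\|\partial_yf\|_\infty)\mathrm{Area}(\Omega)$ cannot in general be dominated by $\|f\|_\infty$-terms.
\end{itemize}

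The cutoff you hint at does work, but it must be used differently: for $R/2<d\le R$, do not invoke Lemma~\ref{lem: Lemma 1} at all.
\begin{itemize}
\item For such $d$ you have $\frac1d\Omega\subseteq[1/d,R/d]^2$ with $R/d<2$. Hence $\frac1d\Omega\cap\Z^2\subseteq\{(1,1)\}$ and $\mathrm{Area}(\frac1d\Omega)\le(R-1)^2/d^2<4$.
\item The $d$-th error is therefore $<5\|f\|_\infty$, which fits inside the $7\|f\|_\infty$ budgeted per $d$.
\item Both harmonic sums now become $H_n$ with $n=\lfloor R/2\rfloor$. For $R\ge 10$, with Euler's constant $\gamma$,
\[
H_n<\log n+\gamma+\tfrac1{2n}\le\log R+\gamma-\log 2+\tfrac1{10}<\log R .
\]
\item For $1\le R<10$ the trivial bound already suffices:
\[
\Bigl|S'-\tfrac6{\pi^2}\iint_\Omega f\Bigr|\le\|f\|_\infty\bigl(\lfloor R\rfloor^2+(R-1)^2\bigr)<2R^2\|f\|_\infty\le 7\|f\|_\infty(R+4R\log R).
\]
\end{itemize}
For non-integer $R$, also use $\sum_{d>R}d^{-2}\le 1/\lfloor R\rfloor$ together with $\mathrm{Area}(\Omega)\le(R-1)^2$ in the tail. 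This keeps that term at most $\|f\|_\infty R$.
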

\begin{proof}
    Following the proof of \cite[Lemma 2]{BoCoZaha2001} with Lemma \ref{lem: Lemma 1}, we end up with 
     \begin{align}
    \left|S'-\sum_{d=1}^R\frac{\mu(d)}{d^2}\iint\limits_\Omega f(x,y)dxdy\right|&\leq \left(\Big\|\frac{\partial f}{\partial x}\Big\|_\infty+\Big\|\frac{\partial f}{\partial y}\Big\|_\infty\right)\text{Area}(\Omega)\log R\notag\\
    &\quad+7\|f\|_\infty (R+\text{length}(\partial\Omega)\log R).\label{eq:estimation of S'}
    \end{align}

    Now 
    \begin{align}
      \sum_{d=1}^R\frac{\mu(d)}{d^2}  = \sum_{d=1}^\infty\frac{\mu(d)}{d^2} + E_1(R), \label{eq:approximation of zeta(2)} 
    \end{align}
    where 
    \begin{align*}
    |E_1(R)|\leq \sum_{d=R+1}^\infty \frac{1}{d^2}\leq \int_R^\infty \frac{1}{t^2}dt\leq \frac{1}{R}.
    \end{align*}
Note that the main term \eqref{eq:approximation of zeta(2)} is $1/\zeta(2) =6/\pi^2$.  Combining \eqref{eq:estimation of S'} and \eqref{eq:approximation of zeta(2)}, we arrive at
     \begin{align*}
    \left|S'-\frac{6}{\pi^2}\sum_{d=1}^R\iint\limits_\Omega f(x,y)dxdy\right|&\leq \left(\Big\|\frac{\partial f}{\partial x}\Big\|_\infty+\Big\|\frac{\partial f}{\partial y}\Big\|_\infty\right)\text{Area}(\Omega)\log R\notag\\
    &\quad+7\|f\|_\infty (R+\text{length}(\partial\Omega)\log R)+\frac{1}{R}\iint\limits_\Omega f(x,y)dxdy.
    \end{align*}
    Observe that
    \begin{align*}
        \frac{1}{R}\iint\limits_\Omega f(x,y)dxdy\leq \frac{\|f\|_\infty \text{Area}(\Omega) }{R}\leq \|f\|_\infty R. 
    \end{align*}
    Moreover, since $\Omega$ is convex, the projection of $\partial\Omega$ to $x$ and $y$-axis is of length at most $R$. Therefore, 
    \[
    \text{length}(\partial\Omega)\leq 4R.
    \]
    This finishes the proof of Lemma \ref{lem: Lemma 2}.
\end{proof}

\begin{coro}\textbf{(Effective version of Lemma 5 in \cite{BoCoZaha2001})}\label{coro: Corollary 2} Suppose that $r\geq 2$, $\mathbf{k}\in\N^r, M\geq 1$ and $(x,y)\in\Omega_{\mathbf{k},M}$. Then
\[
|f_{r,\mathbf{k}}(x,y)|\leq 2^{3r+6}\frac{M}{Q^4}.
\]
    
\end{coro}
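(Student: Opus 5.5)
The plan is to reduce the bound to elementary lower bounds on the four linear forms $x$, $y$, $L_{\mathbf{k},r}(x,y)$, $L_{\mathbf{k},r+1}(x,y)$ on $\Omega_{\mathbf{k},M}$. These are then multiplied together. I will use the definition of $\Omega_{\mathbf{k},M}$ from \cite{BoCoZaha2001}, with the constants made explicit: the points $(x,y)$ in $Q\mathscr{T}$ whose orbit under the $\mathbf{k}$-recursion \eqref{def: L[k,i](x,y)} stays in $Q\mathscr{T}$, with the size of the orbit controlled by $M$. Concretely, I take $\Omega_{\mathbf{k},M}$ to mean that the ratios $Q/L_{\mathbf{k},i}$ are at most of order $M$ (equivalently, the $k_i$ are at most of order $M$). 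Lemma 5 of \cite{BoCoZaha2001} gives the bound with an unspecified constant, and I only need to track that constant.

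The first step records the two basic facts that hold along a chain in $Q\mathscr{T}$. For each $i$ with $0\le i\le r$ we have $0<L_{\mathbf{k},i},L_{\mathbf{k},i+1}\le Q$ and $L_{\mathbf{k},i}+L_{\mathbf{k},i+1}>Q$. Moreover $k_i=\bigl[(Q+L_{\mathbf{k},i-1})/L_{\mathbf{k},i}\bigr]$, which gives
\[
\frac{Q}{k_i+1}<L_{\mathbf{k},i}\le\frac{2Q}{k_i}.
\]
From the first fact, each consecutive pair satisfies $\max(L_{\mathbf{k},i},L_{\mathbf{k},i+1})>Q/2$. Hence
\[
L_{\mathbf{k},i}L_{\mathbf{k},i+1}\ge\frac{Q}{2}\min(L_{\mathbf{k},i},L_{\mathbf{k},i+1}),
\]
and so
\[
f_{r,\mathbf{k}}(x,y)\le\frac{4}{Q^2}\cdot\frac{1}{\min(x,y)\,\min(L_{\mathbf{k},r},L_{\mathbf{k},r+1})}.
\]

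The second step bounds each of the two minima from below by $Q$ divided by a power of $2$ times (a root of) $M$. Here I would use the structural fact about the BCZ map: a large index $k_i\ge3$ must be surrounded by indices equal to $2$, because a small denominator is flanked by large ones. This gives the relations $L_{\mathbf{k},i+1}=2L_{\mathbf{k},i}-L_{\mathbf{k},i-1}$ away from the large entries. Running the three-term recursion forward and backward, each $L_{\mathbf{k},i}$ with $0\le i\le r+1$ is comparable to the denominator controlled by $M$ up to a factor at most $2$ per step. This is where the $2^{3r}$ comes from, with a loss of roughly $2^3$ per step from the recursion $L_{i}=k_{i-1}L_{i-1}-L_{i-2}$ together with the bounds $Q/(k+1)<L\le 2Q/k$. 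Combining the two minima then gives a product of at least $Q^2/(2^{3r+4}M)$, and with the prefactor $4/Q^2$ this yields $2^{3r+6}M/Q^4$.

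The main obstacle is the second step. The difficulty is propagating a lower bound from the single coordinate controlled by $M$ to all of $x$, $y$, $L_{\mathbf{k},r}$, $L_{\mathbf{k},r+1}$ without the constant blowing up faster than $2^{3r}$. I would first try a simple induction on $i$ with the hypothesis $L_{\mathbf{k},i}\ge Q/(2^{3i}M)$. The step would use $L_{\mathbf{k},i+1}>Q-L_{\mathbf{k},i}$ when $L_{\mathbf{k},i}$ is small. When $L_{\mathbf{k},i}$ is large, I would use $L_{\mathbf{k},i+1}=k_iL_{\mathbf{k},i}-L_{\mathbf{k},i-1}\ge L_{\mathbf{k},i}-L_{\mathbf{k},i-1}$ together with the fact that two consecutive members cannot both be small. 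The inductive hypothesis would then be adjusted until the constants close, and that tuning is where the slack in $2^{3r+6}$ would be spent.
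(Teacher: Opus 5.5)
Your Step 1 is correct: $L_{\mathbf{k},i}+L_{\mathbf{k},i+1}>Q$ gives $\max(L_{\mathbf{k},i},L_{\mathbf{k},i+1})>Q/2$, hence $f_{r,\mathbf{k}}\le 4/\bigl(Q^2\min(x,y)\min(L_{\mathbf{k},r},L_{\mathbf{k},r+1})\bigr)$. The gap is in Step 2, and it begins with the definition. In the paper, $\Omega_{\mathbf{k},M}$ consists of the points of $\Omega_{\mathbf{k}}$ with $\min(x,y,L_{\mathbf{k},r},L_{\mathbf{k},r+1})\ge Q/M$. It constrains only these four forms. It does not bound the $k_i$ (that is the role of $T$ in $\mathscr{D}(T,M)$), and it does not bound the intermediate $L_{\mathbf{k},i}$, $2\le i\le r-1$, which can be tiny. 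Take $r=3$, $(x,y)=(Q-1,Q)$ and $\mathbf{k}=(1,2Q,1)$, which are the denominators $Q-1,Q,1,Q,Q-1$ around $1/1$. Then $L_{\mathbf{k},2}=1$, while $x,y,L_{\mathbf{k},3},L_{\mathbf{k},4}\ge Q-1$. This point lies in $\Omega_{\mathbf{k},2}$, so your induction hypothesis $L_{\mathbf{k},i}\ge Q/(2^{3i}M)$ fails at $i=2$ once $Q>128$. The same example refutes your structural fact: the huge index $2Q$ is flanked by $1$'s, not $2$'s.

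More fundamentally, lower bounds of the form $L_{\mathbf{k},i}\ge Q/(c_iM)$, even where true, only give $\min(x,y)\min(L_{\mathbf{k},r},L_{\mathbf{k},r+1})\ge Q^2/(cM^2)$. That yields a bound of order $M^2/Q^4$. A bound with $\sqrt{M}$ on each minimum is false, since $\min(x,y)$ can be as small as roughly $Q/M$ (take $x$ small and $y$ close to $Q$).

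Linearity in $M$ requires an idea your proposal never states: the two minima cannot both be small. This is Lemma 5 of \cite{BoCoZaha2001}, which the paper invokes: at least three of $x,y,L_{\mathbf{k},r},L_{\mathbf{k},r+1}$ are $\ge 2^{-r-2}Q$. Then one factor is $\ge Q/M$ by the definition of $\Omega_{\mathbf{k},M}$, and the other three are $\ge 2^{-r-2}Q$, so $xyL_{\mathbf{k},r}L_{\mathbf{k},r+1}\ge 2^{-3r-6}Q^4/M$. Thus the constant $2^{3r+6}=(2^{r+2})^3$ comes from three large factors, not from a per-step loss along the recursion. Combined with your Step 1, the lemma would even give $2^{r+4}M/Q^4$. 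Without this ``three out of four'' fact, the crux of the corollary is missing.
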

\begin{proof}
        By definition of $\Omega_{\mathbf{k},M}$, we have $x,y,L_{\mathbf{k},r}(x,y)$, and $L_{\mathbf{k},r+1}(x,y)$ all $\geq Q/M$. By \cite[Lemma 5]{BoCoZaha2001}, we have at least three of these four expressions $\geq 2^{-r-2}Q$. Therefore,
    \[
    xyL_{\mathbf{k},r}(x,y)L_{\mathbf{k},r+1}(x,y)\geq \frac{Q}{M}(2^{-r-2}Q)^3 = 2^{-3r-6}\frac{Q^4}{M}.
    \]
    Therefore, 
    \[
    |f_{r,\mathbf{k}}(x,y)| = \frac{1}{xyL_{\mathbf{k},r}(x,y)L_{\mathbf{k},r+1}(x,y)}\leq 2^{3r+6}\frac{M}{Q^4}.
    \]
    This finishes the proof.
\end{proof}
\begin{coro}\textbf{(Effective version of Corollary 3 in \cite{BoCoZaha2001})}\label{coro: Corollary 3}
    Suppose that $r\geq 2$, $\mathbf{k}\in\N^r, M\geq 1$ and $(x,y)\in\Omega_{\mathbf{k},M}$. Then
    \[
    \max \left(\Big|\frac{\partial f_{r,\mathbf{k}}}{\partial x}(x,y)\Big|, \Big|\frac{\partial f_{r,\mathbf{k}}}{\partial y}(x,y)\Big|\right)\leq 
3\cdot 2^{3r+6}\frac{k_1k_2\cdots k_{r}M^2}{Q^5}.    \]
\end{coro}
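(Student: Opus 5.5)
We differentiate $f_{r,\mathbf{k}}(x,y)=\bigl(xyL_{\mathbf{k},r}(x,y)L_{\mathbf{k},r+1}(x,y)\bigr)^{-1}$ logarithmically. Write $P=xyL_{\mathbf{k},r}L_{\mathbf{k},r+1}$. Then
\[
\frac{\partial f_{r,\mathbf{k}}}{\partial x}=-f_{r,\mathbf{k}}\left(\frac{1}{x}+\frac{\partial_x L_{\mathbf{k},r}}{L_{\mathbf{k},r}}+\frac{\partial_x L_{\mathbf{k},r+1}}{L_{\mathbf{k},r+1}}\right),
\]
and the $y$-derivative is the same with $1/y$ in place of $1/x$. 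By Corollary~\ref{coro: Corollary 2}, $|f_{r,\mathbf{k}}|\leq 2^{3r+6}M/Q^4$ on $\Omega_{\mathbf{k},M}$. It therefore suffices to show that each of the three quotients in the bracket is at most $k_1\cdots k_rM/Q$ in absolute value, uniformly on $\Omega_{\mathbf{k},M}$. Summing three such terms then produces exactly the factor $3$ in the claimed bound.

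The denominators are handled by the definition of $\Omega_{\mathbf{k},M}$, as used in the proof of Corollary~\ref{coro: Corollary 2}: each of $x$, $y$, $L_{\mathbf{k},r}(x,y)$, $L_{\mathbf{k},r+1}(x,y)$ is at least $Q/M$. Hence $1/x\leq M/Q$ and $1/y\leq M/Q$, which is certainly at most $k_1\cdots k_rM/Q$ since all $k_i\geq1$.

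For the numerators, note that $L_{\mathbf{k},i}$ is linear, say $L_{\mathbf{k},i}(x,y)=a_ix+b_iy$. The recursion \eqref{def: L[k,i](x,y)} gives $a_i=k_{i-1}a_{i-1}-a_{i-2}$ and $b_i=k_{i-1}b_{i-1}-b_{i-2}$, with $(a_0,b_0)=(1,0)$ and $(a_1,b_1)=(0,1)$. A direct induction with the triangle inequality only gives $|a_i|,|b_i|\leq\prod_{j<i}(k_j+1)$, which can be as large as $2^{i-1}k_1\cdots k_{i-1}$; this is not good enough. I would instead use the structure of the recursion. The coefficients are continuants, i.e.\ entries of products of the matrices $\begin{pmatrix}k&-1\\1&0\end{pmatrix}$.
\begin{itemize}
\item When all $k_j\geq2$, the sequences $|a_i|$ and $|b_i|$ are nondecreasing, and $|a_i|\leq k_{i-1}|a_{i-1}|$ follows from the sign pattern of the continuants.
\item Indices with $k_j=1$ need separate treatment. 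In the BCZ setting two consecutive $k_j$ equal to $1$ cannot occur on $\mathscr{T}$, and a single $1$ only reduces the size of the coefficients.
\end{itemize}
This would give $|a_i|,|b_i|\leq k_1\cdots k_{i-1}\leq k_1\cdots k_r$ for $i\leq r+1$, hence $|\partial_xL_{\mathbf{k},i}|/L_{\mathbf{k},i}\leq k_1\cdots k_rM/Q$, and the same for $\partial_y$.

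The main obstacle is establishing this product bound on the coefficients of $L_{\mathbf{k},i}$ cleanly, without losing a factor $2^r$. If the sharp continuant bound proves awkward, the fallback is the weaker bound $\prod_{j}(k_j+1)\leq 2^rk_1\cdots k_r$. One then checks whether the slack in Corollary~\ref{coro: Corollary 2} can absorb the extra $2^r$: in \cite{BoCoZaha2001} that lemma is stated with constant $2^{-r-2}$, which leaves some room. If it cannot, one falls back on the original argument in \cite[Corollary 3]{BoCoZaha2001} to control the coefficients. Once the coefficient bound is in hand, multiplying by $\|f_{r,\mathbf{k}}\|_\infty$ and summing the three terms yields $3\cdot2^{3r+6}k_1\cdots k_rM^2/Q^5$.
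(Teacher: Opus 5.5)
Your skeleton is the same as the paper's. It uses logarithmic differentiation of $f_{r,\mathbf{k}}$, the bound $|f_{r,\mathbf{k}}|\le 2^{3r+6}M/Q^4$ from Corollary~\ref{coro: Corollary 2}, the lower bound $Q/M$ on $x,y,L_{\mathbf{k},r},L_{\mathbf{k},r+1}$, and a bound $k_1\cdots k_{i-1}$ on the coefficients of $L_{\mathbf{k},i}$. The paper simply asserts that last bound ``by the recursive definition'', and you are right that the naive induction does not give it.

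However, your replacement argument has a gap. The step $|b_{i+1}|\le k_i|b_i|$ needs $b_{i-1}$ and $b_{i+1}$ to have the same sign as $b_i$. The heuristic ``no two consecutive ones, and a single one only shrinks the coefficients'' does not guarantee this. For example, $\mathbf{k}=(2,1,2,1)$ has no consecutive ones, yet it gives $b_4=0$ and $b_5=-1$. Your fallback does not recover the stated constant either. Corollary~\ref{coro: Corollary 2} already uses the $2^{-r-2}$ of \cite[Lemma 5]{BoCoZaha2001} in full, so an extra $2^r$ would give $3\cdot 2^{4r+6}$. Appealing to \cite[Corollary 3]{BoCoZaha2001} yields only an implied constant.

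The missing ingredient is the positivity of every $L_{\mathbf{k},i}$ on $\Omega_{\mathbf{k}}$; this is also the fact that actually excludes consecutive ones. Write $L_{\mathbf{k},i}=a_ix+b_iy$. The recursion shows that $b_{i+1}L_{\mathbf{k},i}-b_iL_{\mathbf{k},i+1}$ and $a_{i+1}L_{\mathbf{k},i}-a_iL_{\mathbf{k},i+1}$ do not depend on $i$. Evaluating them at $i=0$ gives
\[
b_{i+1}L_{\mathbf{k},i}-b_iL_{\mathbf{k},i+1}=x,\qquad a_{i+1}L_{\mathbf{k},i}-a_iL_{\mathbf{k},i+1}=-y\qquad(0\le i\le r).
\]
Evaluate these at any point of $\Omega_{\mathbf{k},M}$, where $x$, $y$ and all $L_{\mathbf{k},i}$ are positive. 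Inducting from $b_1=1$ and $a_1=0$ gives $b_i>0$ for $i\ge1$ and $a_i<0$ for $i\ge2$. Hence $0<b_{i+1}=k_ib_i-b_{i-1}\le k_ib_i$. For $i\ge2$ we also get $0<|a_{i+1}|=k_i|a_i|-|a_{i-1}|\le k_i|a_i|$. Therefore $|a_i|,|b_i|\le k_1\cdots k_{i-1}$ for $i\in\{r,r+1\}$, and your computation then yields exactly $3\cdot 2^{3r+6}k_1\cdots k_rM^2/Q^5$.

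At integer points this is just positivity of Farey determinants. If $\gamma_n=p_n/q_n$, then $b_i=p_{j+i}q_j-p_jq_{j+i}$ and $-a_i=p_{j+i}q_{j+1}-p_{j+1}q_{j+i}$.
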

\begin{proof}
Denote $g = f_{r,\mathbf{k}}^{-1}$. Then
\begin{align*}
    \frac{\partial f_{r,\mathbf{k}}}{\partial x} &= -g^{-2}\frac{\partial g}{\partial x} =  -g^{-2}y(L_{\mathbf{k},r}L_{\mathbf{k},r+1}+x(L_{\mathbf{k},r}\cdot\partial_x L_{\mathbf{k},r+1}+\partial_x L_{\mathbf{k},r}\cdot L_{\mathbf{k},r+1}))\\
    &=-f_{r,\mathbf{k}}\left(\frac{1}{x}+\frac{\partial_xL_{\mathbf{k},r}}{L_{\mathbf{k},r}}+\frac{\partial_x L_{\mathbf{k},r+1}}{L_{\mathbf{k},r+1}}\right).
\end{align*}
Similarly, 
\[
  \frac{\partial f_{r,\mathbf{k}}}{\partial y} = -f_{r,\mathbf{k}}\left(\frac{1}{y}+\frac{\partial_yL_{\mathbf{k},r}}{L_{\mathbf{k},r}}+\frac{\partial_y L_{\mathbf{k},r+1}}{L_{\mathbf{k},r+1}}\right).
\]
By Corollary \ref{coro: Corollary 2}, 
\[
|f_{r,\mathbf{k}}|\leq 2^{3r+6}\frac{M}{Q^4}.
\]

Moreover, by the recursive definition of $L_{\mathbf{k},i}(x,y)$ in \eqref{def: L[k,i](x,y)}, 
\[
\partial_xL_{\mathbf{k},r},\partial_yL_{\mathbf{k},r}\leq k_1k_2\cdots k_{r-1}.
\]
Combining with the property of $\Omega_{\mathbf{k},M}$, we obtain
\begin{align*}
 \left|\frac{\partial f_{r,\mathbf{k}}}{\partial x} \right|&\leq 2^{3r+6}\frac{M}{Q^4}\left(\frac{M}{Q}+\frac{k_1k_2\cdots k_{r-1}M}{Q}+\frac{k_1k_2\cdots k_{r}M}{Q}\right)\\
 &\leq 3\cdot 2^{3r+6}\frac{k_1k_2\cdots k_{r}M^2}{Q^5}.
\end{align*}
Same bound works for $\partial_y f_{r,\mathbf{k}}$. This finishes the proof of the lemma. 
\end{proof}
Let 
\begin{align*}
    \Omega_k = \{ (x,y) \in \R^2 : &0 <L_{\bk,i}(x,y)\leq Q \text{ for all } 0 \leq i \leq r+1,\\
    &Q <L_{\bk,i}(x,y) + L_{\bk,i+1}(x,y) \text{ for all } 0 \leq i \leq r\}.
\end{align*}
For each $M \geq 1$ and $\mathbf{k} \in (\N^*)^r$, we consider its convex subset
\begin{align*}
   \Omega_{\mathbf{k},M} &= \{ (x,y) \in \Omega_\mathbf{k}: \min\left(x,y,L_{\bk,r}(x,y), L_{\bk,r+1}(x,y)\right) \geq Q/M \}\\
   &= \Omega_\mathbf{k} \cap [Q/M, \infty)^2 \cap \bigcap_{i \in \{r, r+1\}} \{(x,y) \in \R^2: L_{\bk,i}(x,y) \geq Q/M\}.   
\end{align*}

Also, let 
\begin{align*}
    \fanm_{\bk} = \{(a,b) \in \Omega_\bk \cap \Z^2: \gcd(a,b) = 1\}.
\end{align*}
We then consider its subset 
\begin{align*}
    \fanm_{\bk, M} = \fanm \cap \Omega_{\bk,M}
\end{align*}
of $\fanm_\bk$ and the sum
\begin{align}
    S_{r,\bk,M}(Q) = \sum_{(a,b) \in \fanm_{\bk,M}} f_{r, \bk}(a,b).\label{def:S(r,k,M)}
\end{align}

\begin{lem}\textbf{(Effective version of Lemma 6 in \cite{BoCoZaha2001})}\label{lem: Lemma 6}
    Suppose that $r\geq 2$ and $M\geq 1$. Then
    \[
    \sum_{\mathbf{k}\in \N^r} |S_{r,\mathbf{k}}(Q)-S_{r,\mathbf{k},M}(Q)|
  \leq \frac{2^{3r+8}}{MQ^2}.  \]
\end{lem}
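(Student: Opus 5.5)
The difference $S_{r,\mathbf{k}}(Q)-S_{r,\mathbf{k},M}(Q)$ is a sum of $f_{r,\mathbf{k}}(a,b)$ over primitive lattice points $(a,b)\in\fanm_{\bk}\setminus\fanm_{\bk,M}$. These are exactly the points for which at least one of $a$, $b$, $L_{\bk,r}(a,b)$, $L_{\bk,r+1}(a,b)$ is $<Q/M$. Summed over all $\mathbf{k}$, the left side therefore equals
\[
\sum_{\substack{j:\ \min(q_j,q_{j+1},q_{j+r},q_{j+r+1})<Q/M}}\frac{1}{q_jq_{j+1}q_{j+r}q_{j+r+1}}.
\]
Here $j$ runs over $1,\dots,N$, and we use the fact that each index $j$ belongs to exactly one $\mathscr{L}_{\mathbf{k}}$. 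All terms are positive, so no absolute-value issues arise. By a union bound, I will split according to which of the four denominators is small, giving at most four sums, and bound each separately.

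The key input is \cite[Lemma 5]{BoCoZaha2001}, already used in Corollary~\ref{coro: Corollary 2}: among the four denominators, at least three are $\geq 2^{-r-2}Q$. On the part of the sum where, say, $q_{j+r}<Q/M$, the other three factors are then at least $2^{-r-2}Q$. Each term is thus at most $2^{3r+6}Q^{-3}\cdot q_{j+r}^{-1}$. It remains to count, for each $q$, how many $j$ have $q_{j+r}=q$. Since $j\mapsto j+r$ is a bijection mod $N$, this is just the number of Farey fractions with denominator $q$, namely $\varphi(q)\leq q$. The resulting sum is
\[
\sum_{q<Q/M}\frac{\varphi(q)}{q}\leq \frac{Q}{M},
\]
so this piece is at most $2^{3r+6}/(MQ^2)$. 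The cases where $q_j$, $q_{j+1}$ or $q_{j+r+1}$ is small are identical. The four cases together give $4\cdot 2^{3r+6}/(MQ^2)=2^{3r+8}/(MQ^2)$, which is exactly the claimed bound.

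The main obstacle is making sure the pointwise lower bound really applies in each case, and that the multiplicity count is correct. One subtlety is that the small factor itself might be the one excluded from "three of four". This is harmless: if some factor is $<Q/M\leq Q$, then the three factors $\geq 2^{-r-2}Q$ can be taken to be the others. When $M<2^{r+2}$ this requires a little care, but in that regime the trivial bound also works, because each term is $\leq (2^{-r-2}Q)^{-3}q^{-1}$ regardless. The correspondence between $j$ and the pairs $(q_j,q_{j+1})\in\fanm_{\bk}$ (via $q_{j+i}=L_{\bk,i}(q_j,q_{j+1})$) is taken from \cite{BoCoZaha2001}, as in the definition of $S_{r,\mathbf{k}}$ above.
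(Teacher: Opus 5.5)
The gap is in your treatment of $1\le M<2^{r+2}$, a range the lemma covers. On the piece $\{j: q_{j+r}<Q/M\}$ you bound each term by $2^{3r+6}Q^{-3}q_{j+r}^{-1}$. That needs $q_j,q_{j+1},q_{j+r+1}\ge 2^{-r-2}Q$. \cite[Lemma 5]{BoCoZaha2001} gives this only when $q_{j+r}<2^{-r-2}Q$, which is guaranteed once $M\ge 2^{r+2}$. In that range your argument is complete and coincides with the paper's. For smaller $M$, your claim that the three large factors ``can be taken to be the others'' is false. So is the fallback that each term is $\le (2^{-r-2}Q)^{-3}q^{-1}$, where $q$ is the factor defining the piece.

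Concretely, take $M=1$, $Q\ge 2^{3r+6}$ and $j=N$. Then $q_N=1$, $q_{N+1}=Q$, $q_{N+r}=Q-r+1$ and $q_{N+r+1}=Q-r$. This $j$ lies in your piece, since $q_{N+r}<Q=Q/M$. Yet its term $1/(Q(Q-r+1)(Q-r))$ exceeds $2^{3r+6}/(Q^3(Q-r+1))$, because $Q^2>2^{3r+6}(Q-r)$. The union bound over ``which factor lies below $Q/M$'' is therefore the wrong decomposition.

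The repair is to decompose by the position of the minimum, which is what the paper's one-line argument does. Put $m_j=\min(q_j,q_{j+1},q_{j+r},q_{j+r+1})$. At most one of the four factors is $<2^{-r-2}Q$, and such a factor must be the minimum. Hence $1/(q_jq_{j+1}q_{j+r}q_{j+r+1})\le 2^{3r+6}/(Q^3m_j)$ for every $j$, independently of $M$. The indices you sum over are exactly those with $m_j<Q/M$. Assign each $j$ to one position in $\{0,1,r,r+1\}$ where $m_j$ is attained; this gives $\#\{j: m_j=q\}\le 4\varphi(q)$. The total is then at most $\frac{2^{3r+6}}{Q^3}\sum_{q<Q/M}\frac{4\varphi(q)}{q}\le \frac{2^{3r+8}}{MQ^2}$, uniformly in $M\ge 1$.

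In the paper's application $M=\lceil (Q/\log Q)^{1/(r+3)}\rceil\ge 2^{r+3}$, so your version would suffice there, but not for the lemma as stated.
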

\begin{proof}
   By \cite[Lemma 5]{BoCoZaha2001}, among $q_j, q_{j+1},q_{j+r}, q_{j+r+1}$, we have at least three of these four numbers $\geq 2^{-r-2}Q$. Therefore, following the original proof, 
   \begin{align*}
       \sum_{\mathbf{k}\in \N^r} |S_{r,\mathbf{k}}(Q)-S_{r,\mathbf{k},M}(Q)|&\leq \sum_{\substack{1\leq j\leq N\\\min(q_j,q_{j+1}, q_{j+r}, q_{j+r+1})<Q/M}}\frac{2^{3r+6}}{Q^3\min(q_j,q_{j+1}, q_{j+r}, q_{j+r+1})}\\
       &\leq \frac{2^{3r+8}}{MQ^2}.
   \end{align*}
\end{proof}

Now we are ready to approximate $S_{r}(Q)$ using Lemma \ref{lem: Lemma 2}. For $T,M
\in\N$ and $k\in\N^r$ \mbox{with $r\geq 2$,} define
\begin{align}
    \mathscr{D}(T,M) = \frac{1}{Q}\bigcup_{1\leq k_1,\cdots, k_r\leq T}\Omega_{\mathbf{k},M}.\label{def: D(T,M)}
\end{align}
 An important remark is that $\mathscr{D}(T,M)$ is indeed independent of $Q$. To that end, we have the following lemma.
 
\begin{lem}\label{lem: approximate Q squared Sr(Q)}
    Suppose $r\geq 2$, $M\geq 1$ and $2Q\geq T\geq 2^{r+3}$. Then,
    \begin{align*}
    |Q^2S_r(Q)-\frac{6}{\pi^2}\iint\limits_{\mathscr{D}(T,M)}f_r(x,y)\ dxdy|&\leq 3\cdot 2^{3r+6}\frac{T^r M^2 \log Q}{Q}+7\cdot 2^{3r+6}\frac{M(1+4\log Q)}{Q}\notag\\
     &\quad+ 2^{3r+6}\frac{M}{Q}+\frac{2^{3r+8}}{M}+\frac{2^{3r+6} r}{Q}\left(\frac{12Q}{\pi^2 T}+\log\frac{2Q}{T}+2\right).
    \end{align*}
\end{lem}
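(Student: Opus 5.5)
We will split $Q^2S_r(Q)$ into three pieces and bound each difference separately, so that the five error terms in the statement correspond one-to-one to the steps below. Write
\[
S_r(Q)=\sum_{\mathbf{k}\in\N^r}S_{r,\mathbf{k}}(Q)=\sum_{\mathbf{k}\in[1,T]^r}S_{r,\mathbf{k},M}(Q)+\sum_{\mathbf{k}\in\N^r}\bigl(S_{r,\mathbf{k}}-S_{r,\mathbf{k},M}\bigr)-\sum_{\mathbf{k}\notin[1,T]^r}S_{r,\mathbf{k},M}(Q).
\]

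\emph{The $M$-truncation.} The middle sum is handled directly by Lemma \ref{lem: Lemma 6}. After multiplying by $Q^2$ it contributes $2^{3r+8}/M$.

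\emph{The main term.} For each $\mathbf{k}$ with all $k_i\le T$, we apply Lemma \ref{lem: Lemma 2} to $f=f_{r,\mathbf{k}}$ on the convex region $\Omega=\Omega_{\mathbf{k},M}\subseteq[1,Q]^2$, with $R=Q$. The required bounds come from earlier results:
\begin{itemize}
\item Corollary \ref{coro: Corollary 2} gives $\|f\|_\infty\le 2^{3r+6}M/Q^4$.
\item Corollary \ref{coro: Corollary 3} gives a gradient bound of $3\cdot2^{3r+6}k_1\cdots k_rM^2/Q^5$ for each partial derivative.
\end{itemize}

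The key geometric input is that the regions $\Omega_{\mathbf{k}}$ for different $\mathbf{k}$ are disjoint. This holds because $\mathbf{k}$ is determined by $(q_j,q_{j+1})$ through the BCZ map. Moreover, their union lies in $Q\mathscr{T}$. Hence the areas sum to at most $Q^2/2$ and the boundary pieces are controlled.

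To avoid a factor of $T^r$ in the $\|f\|_\infty$ terms, I would apply Lemma \ref{lem: Lemma 1} (and its coprime version from Lemma \ref{lem: Lemma 2}) once to the union $\bigcup_{\mathbf{k}}\Omega_{\mathbf{k},M}$. On this union $f_r$ coincides piecewise with $f_{r,\mathbf{k}}$. Alternatively, I would note that the union is a finite union of convex polygons whose total boundary is $O(Q)$; this is the step to double-check.

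Bounding $k_1\cdots k_r\le T^r$ and summing the gradient term over the area then gives, after multiplying by $Q^2$, the terms
\[
3\cdot2^{3r+6}\,T^rM^2\log Q/Q,\qquad 7\cdot2^{3r+6}M(1+4\log Q)/Q,\qquad 2^{3r+6}M/Q.
\]
Rescaling $(x,y)\mapsto(x,y)/Q$ and using that $f_{r,\mathbf{k}}$ is homogeneous of degree $-4$ turns $\frac{6}{\pi^2}\sum_{\mathbf{k}}\iint_{\Omega_{\mathbf{k},M}}f_{r,\mathbf{k}}$ into $\frac{6}{\pi^2}Q^{-2}\iint_{\mathscr{D}(T,M)}f_r$. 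Here I use that on $\frac1Q\Omega_{\mathbf{k}}$ the functions $L_{\mathbf{k},i}$ agree with $L_i$.

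\emph{The tail in $\mathbf{k}$.} This is the expected main obstacle. If some $k_i>T$, then $q_{j+i}\le (2Q)/k_i<2Q/T$ is small. By \cite[Lemma 5]{BoCoZaha2001}, the other three of the four denominators are $\ge 2^{-r-2}Q$, so each term is at most $2^{3r+6}/(Q^3q_{j+i})$. I would count the indices $j$ with $q_{j+i}=q$ for a fixed position $i\le r$: there are $\varphi(q)$ such $j$ (one per Farey fraction with denominator $q$). So the tail is bounded by
\[
\frac{2^{3r+6}r}{Q^3}\sum_{q<2Q/T}\frac{\varphi(q)}{q}\le\frac{2^{3r+6}r}{Q^3}\Bigl(\frac{6}{\pi^2}\cdot\frac{2Q}{T}+\log\frac{2Q}{T}+2\Bigr),
\]
using the estimate for $\sum\varphi(s)/s$ from \eqref{eq:sum of phi(s)/s}. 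Multiplying by $Q^2$ gives the last term, and this step needs the hypothesis $T\le2Q$.

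The same argument controls the difference between $\iint_{\mathscr{D}(T,M)}$ and the tail-free count, since the tail regions are simply not included in $\mathscr{D}(T,M)$. The condition $T\ge2^{r+3}$ ensures that Lemma 5 applies in the regime where $k_i>T$ forces $q_{j+i}$ to be small. Adding the three contributions completes the estimate.
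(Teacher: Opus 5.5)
Most of your argument is the paper's own proof. You use Lemma~\ref{lem: Lemma 6} for the $M$-truncation. You use Lemma~\ref{lem: Lemma 2} with Corollaries~\ref{coro: Corollary 2} and~\ref{coro: Corollary 3}, together with $\sum_{\mathbf{k}}\mathrm{Area}(\Omega_{\mathbf{k}})\le Q^2/2$, for the gradient term. The rescaling onto $\mathscr{D}(T,M)$ is exact, and the $\varphi(q)$ count handles $\mathbf{k}\notin[1,T]^r$. The paper bounds $S_r-S_{r,T}$ first, but that is equivalent because $0\le S_{r,\mathbf{k},M}\le S_{r,\mathbf{k}}$.

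\textbf{The gap.} It is the step you flag yourself: removing the number of $\mathbf{k}$'s from the $\|f\|_\infty$-terms. Lemmas~\ref{lem: Lemma 1} and~\ref{lem: Lemma 2} need a convex region and a $C^1$ function. But $\bigcup_{\mathbf{k}\in[1,T]^r}\Omega_{\mathbf{k},M}$ is not convex, and $f_r$ is discontinuous across the internal cut lines. For instance, across $Q+x=ky$ the function $[(Q+x)/y]\,y-x$ jumps from $Q-y$ to $Q$.

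Counting these interfaces in the proof of Lemma~\ref{lem: Lemma 1} does not give total boundary $O(Q)$ either. The segment of $Q+x=ky$ inside $Q\mathscr{T}$ runs from $\bigl(\frac{(k-1)Q}{k+1},\frac{2Q}{k+1}\bigr)$ to $\bigl(Q,\frac{2Q}{k}\bigr)$, so its length is about $2Q/k$. For $k$ between a constant multiple of $r$ and $\min(T,2M)$ it is a genuine interface; for $r=2$ it separates $\Omega_{(k-1,1),M}$ from $\Omega_{(k,1),M}$. The interfaces therefore have total length $\gg Q\log\min(T,M)$. The Steiner-type constant is also still paid piece by piece. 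So this route cannot produce the $T$-independent terms $7\cdot 2^{3r+6}M(1+4\log Q)/Q+2^{3r+6}M/Q$.

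\textbf{How the paper handles this step.} The paper's proof does not close this gap either. It applies Lemma~\ref{lem: Lemma 2} to each $\Omega_{\mathbf{k},M}$ separately, exactly as in your first step. When it sums over $1\le k_1,\dots,k_r\le T$, it writes the second and third terms with no factor for the number of $\mathbf{k}$. Already for $r=2$ there are roughly $\min(T,2M)$ vectors $\mathbf{k}=(k,1)$ with $\Omega_{\mathbf{k},M}\neq\emptyset$, so that summation is not justified as written.

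The per-$\mathbf{k}$ argument, yours or the paper's, actually proves the stated inequality with those two terms multiplied by $\#\{\mathbf{k}\in[1,T]^r:\Omega_{\mathbf{k},M}\neq\emptyset\}\le T^r$. For the later choice $M=T\ge 2^{r+3}$, this extra contribution is smaller than the first term $3\cdot2^{3r+6}T^rM^2\log Q/Q$. So it only affects constants downstream, but it is not the lemma as stated.

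\textbf{Minor slips.} Neither affects the bound.
\begin{itemize}
\item The last sum in your decomposition should carry a plus sign.
\item For $2\le i\le r-1$, the small denominator $q_{j+i}$ is not one of $q_j,q_{j+1},q_{j+r},q_{j+r+1}$. What you need there is that all four are then $\ge 2^{-r-2}Q>q_{j+i}$.
\end{itemize}
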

\begin{proof}
Apply Lemma \ref{lem: Lemma 2} with $R=Q, \ \Omega =\Omega_{\mathbf{k},M}$ and $f=f_{r,\mathbf{k}}$, we obtain
\begin{align*}
    \Big|S_{r,\mathbf{k},M}(Q)-\frac{6}{\pi^2}\iint\limits_{\Omega_{\mathbf{k},M}}f_{r,\mathbf{k}}(x,y)\ dxdy\Big|&\leq \left(\Big\|\frac{\partial f_{r,\mathbf{k}}}{\partial x}\Big\|_\infty+\Big\|\frac{\partial f}{\partial y}\Big\|_\infty\right)\text{Area}(\Omega_{\mathbf{k},M})\log Q\\
    &\quad+7\|f_{r,\mathbf{k}}\|_\infty (Q+4Q\log Q)+\|f_{r,\mathbf{k}}\|_\infty Q.
\end{align*}
where $S_{r,\mathbf{k},M}$ is defined in \eqref{def:S(r,k,M)}. Applying Corollaries \ref{coro: Corollary 2} and \ref{coro: Corollary 3}, and bounding $\text{Area}(\Omega_{\mathbf{k},M})$ by $\text{Area}(\Omega_{\mathbf{k}})$, we obtain
\begin{align*}
    \Big|S_{r,\mathbf{k},M}(Q)-\frac{6}{\pi^2}\iint\limits_{\Omega_{\mathbf{k},M}}f_{r,\mathbf{k}}(x,y)\ dxdy\Big|&\leq 6\cdot 2^{3r+6}\frac{k_1k_2\cdots k_r M^2\text{Area}(\Omega_{\mathbf{k}})\log Q}{Q^5}\\
    &\quad+7\cdot 2^{3r+6}\frac{M(1+4\log Q)}{Q^3}+ 2^{3r+6}\frac{M}{Q^3}.
\end{align*}
Therefore,
\begin{align}
    &\Big|\sum_{1\leq k_1,\cdots,k_r\leq T}S_{r,\mathbf{k},M}(Q)-\frac{6}{\pi^2}\sum_{1\leq k_1,\cdots,k_r\leq T}\iint\limits_{\Omega_{\mathbf{k},M}}f_{r,\mathbf{k}}(x,y)\ dxdy\Big|\label{eq:approximate S(r,k,M) with double integral}\\
    \leq&\ 3\cdot 2^{3r+6}\frac{T^r M^2 \log Q}{Q^3}+7\cdot 2^{3r+6}\frac{M(1+4\log Q)}{Q^3}+ 2^{3r+6}\frac{M}{Q^3}.\notag
\end{align}
Recall the definition of $f_{r}(x,y)$ and $\mathscr{D}(T,M)$ in \eqref{def:fr(x,y)} and \eqref{def: D(T,M)} The second summand in \eqref{eq:approximate S(r,k,M) with double integral} can be reformulated as
\[
\frac{6}{\pi^2}\sum_{1\leq k_1,\cdots,k_r\leq T}\iint\limits_{\Omega_{\mathbf{k},M}}f_{r,\mathbf{k}}(x,y)\ dxdy = \frac{6}{\pi^2}\iint\limits_{\mathscr{D}(T,M)}f_{r}(x,y)\ dxdy.
\]
Applying Lemma \ref{lem: Lemma 6} to the first summand in \eqref{eq:approximate S(r,k,M) with double integral}, we arrive at
\begin{align*}
   \Big|S_{r,T}(Q)-\sum_{1\leq k_1,\cdots,k_r\leq T}S_{r,\mathbf{k},M}(Q)\Big|\leq \frac{2^{3r+8}}{MQ^2}.
\end{align*}

Combining all, we get 
\begin{align}
     \Big|Q^2S_{r,T}(Q)-\frac{6}{\pi^2}\iint\limits_{\mathscr{D}(T,M)}f_{r}(x,y)\ dxdy\Big|&\leq 3\cdot 2^{3r+6}\frac{T^r M^2 \log Q}{Q}+7\cdot 2^{3r+6}\frac{M(1+4\log Q)}{Q}\notag\\
     &\quad+ 2^{3r+6}\frac{M}{Q}+\frac{2^{3r+8}}{M}.\label{eq:approximate SrT(Q) by double integral}
\end{align}
Finally, we approximate $S_r(Q)$ by $S_{r,T}(Q)$. Similar to the proof of Lemma \ref{lem: Lemma 6}, following the proof of \cite[Lemma 7]{BoCoZaha2001}, we have
\begin{align*}
   \Big|S_{r,T}(Q)-S_r(Q) \Big|&\leq \frac{2^{3r+6}}{Q^3}\sum_{j_0=1}^r\sum_{\mathbf{k}, k_{j_0}>T}\sum_{j\in\mathscr{L}_\mathbf{k}}\frac{1}{\min(q_j,q_{j+1}, q_{j+r}, q_{j+r+1})}\\
   &\leq \frac{2^{3r+6}\cdot r}{Q^3}\sum_{q=1}^{[2Q/T]}\frac{\varphi(q)}{q}.
\end{align*}
Using M\"obius inversion, we have
\[
\frac{\varphi(q)}{q} = \sum_{d|q}\frac{\mu(d)}{d}.
\]
Therefore, 
\begin{align*}
    \sum_{q=1}^{[2Q/T]}\frac{\varphi(q)}{q} &= \sum_{q=1}^{[2Q/T]}\sum_{d|q}\frac{\mu(d)}{d} = \sum_{d\leq [2Q/T]}\frac{\mu(d)}{d}\left\lfloor \frac{[2Q/T]}{d}\right\rfloor\\
    &\leq [2Q/T]\sum_{d\leq [2Q/T]}\frac{\mu(d)}{d^2}+\sum_{d\leq [2Q/T]}\frac{1}{d}\\
    &\leq \frac{12Q}{\pi^2 T}+\frac{T}{Q}+\log\frac{2Q}{T}+\gamma+\frac{T}{2Q}\leq \frac{12Q}{\pi^2 T}+\log\frac{2Q}{T}+2,
\end{align*}
provided that $[2Q/T]\geq 1$. Substituting back, we obtain
\begin{align}
   \Big|S_{r,T}(Q)-S_r(Q) \Big|&\leq \frac{2^{3r+6}\cdot r}{Q^3}\left(\frac{12Q}{\pi^2 T}+\log(2Q/T)+2\right).\label{eq:approximate SrT(Q) by Sr(Q)}
\end{align}

Upon combining \eqref{eq:approximate SrT(Q) by double integral} and \eqref{eq:approximate SrT(Q) by Sr(Q)}, the lemma follows.
\end{proof}

\subsection{Asymptotic formula for \texorpdfstring{$S_r(Q)$}{Sr(Q)}}

Now we are ready to prove Theorem \ref{thm: asymptotic for Sr(Q)}.


  
\begin{proof}[Proof of Theorem \ref{thm: asymptotic for Sr(Q)}.]

  Observe that 
    \begin{align*}
   \bigcup_{T,M\geq 1}\mathscr{D}(T, M) = \mathscr{T} \quad\text{ and }\quad \mathscr{D}(T, M)\subset \mathscr{D}(T_1, M_1)\text{ for $T_1\geq T$ and $M_1\geq M$}.
    \end{align*}
    Therefore, 
    \[
    I_r =\lim_{T_1,M_1\rightarrow\infty} \iint\limits_{\mathscr{D}(T_1,M_1)}f_r(x,y)\ dxdy,
    \]
    and 
    \begin{align*}
     \Big| I_r -  \iint\limits_{ \mathscr{D}(T, M)} f_r(x,y)dx\ dy \Big| = \Big| \lim_{T_1,M_1\rightarrow\infty} \iint\limits_{ \mathscr{D}(T_1, M_1)\backslash \mathscr{D}(T, M)} f_r(x,y)dx\ dy \Big|. 
    \end{align*}
    Recall the definition of $f_r(x,y)$ in \eqref{def:fr(x,y)} and the independence of $\mathscr{D}(T, M)$ from $Q$. For every $N$ with $2N\geq T\geq 2^{r+3}$, Lemma \ref{lem: approximate Q squared Sr(Q)} gives
    \begin{align*}
     \iint\limits_{\mathscr{D}(T_1, M_1)\backslash \mathscr{D}(T, M)} f_r(x,y)dx\ dy &\leq  \pi^2\cdot 2^{3r+6}\frac{T_1^r M_1^2 \log N}{N}+7\pi^2\cdot 2^{3r+6}\frac{M_1(1+4\log N)}{3N}\notag\\
  &\quad+ 2^{3r+6}\frac{M_1\pi^2}{3N}+\frac{2^{3r+8}\pi^2}{3M}+\frac{2^{3r+6}\pi^2 r}{3N}\left(\frac{12N}{\pi^2 T}+\log\frac{2N}{T}+2\right).
   \end{align*}
Since the left side is independent of $N$, we may let $N\rightarrow\infty$ while keeping $T_1,T,M_1,M$ fixed. This gives
\begin{align*}
     \iint\limits_{\mathscr{D}(T_1, M_1)\backslash \mathscr{D}(T, M)} f_r(x,y)dx\ dy &\leq \frac{2^{3r+8}\pi^2}{3M}+\frac{2^{3r+8}r}{T}.
\end{align*}
Now letting $T_1,M_1\rightarrow\infty$, we obtain
\[
\Big| I_r -  \iint\limits_{ \mathscr{D}(T, M)} f_r(x,y)dx\ dy \Big| \leq \frac{2^{3r+8}\pi^2}{3M}+\frac{2^{3r+8}r}{T}.
\]

Choose $M=T=\lceil (Q/\log Q)^{1/(r+3)}\rceil$. Combining with Lemma \ref{lem: approximate Q squared Sr(Q)}, \mbox{for $Q/\log Q\geq 2^{(r+3)^2}$,} we arrive at 
\begin{align*}
|R_r|
&\leq
\left(3\cdot 2^{3r+6}
+3\cdot 2^{3r+8}
+\frac{9\cdot 2^{3r+8}r}{\pi^2}\right)
\frac{\log^{1/(r+3)}Q}{Q^{2+1/(r+3)}}+7\cdot 2^{3r+6}\frac{(1+4\log Q)}{(\log Q)^{1/(r+3)}Q^{3-1/(r+3)}} \\
&\quad+2^{3r+6}\frac{1}{(\log Q)^{1/(r+3)}Q^{3-1/(r+3)}}+\frac{2^{3r+6}r}{Q^3}
\left(\log Q+2\right).
\end{align*}
    
\end{proof}

\section{Mundici's conjecture in short intervals}\label{sec: Section 6}
Let $I=(\alpha,\beta]$ be a subinterval of $(0,1])$. We devote this section to the estimation of $S_0(Q,I)$ defined in \eqref{def:S0(Q,I)},  with explicit error bounds. To this end, we give a proof of Theorem \ref{thm: Mundici Conjecture Short}, which is the generalization of Mundici's original conjecture to short intervals.
\subsection{Preliminary Lemmas}
The general idea of the proof follows that in Section 9 of \cite{BoCoZaha2001}, except that we obtain concrete error bounds for the implied constants. In this subsection, we present several lemmas needed for the asymptotic formula of $S_0(Q,I)$. To do so, similarly to Section \ref{sec: Section 3}, we need to estimate sums of the type
\begin{align}
S_I' = S_I'(f,\Omega) = \sum_{\substack{(a,b)\in\Omega\cap\Z^2\\\gcd(a,b)=1\\\overline{b}\in I_a}} f(a,b),\label{def:S'I}
\end{align}
where $I_q := [q(1-\beta),\ q(1-\alpha))$.

\begin{lem}\textbf{(Effective version of Lemma 1.6 in \cite{BoCoZaha2000})}\label{lem: Lemma 4.2}
For any positive integer $q$, any integers $m$ and $n$, and any subinterval \(I\) of \([1,q]\), denote 
\begin{align}
S_{I}(m,n,q):=\sum_{\substack{x\in I\\(x,q)=1}}e\left(\frac{mx+n\bar{x}}{q}\right).\label{def:SI(m,n,q)}
\end{align}
Then for $q\geq e^3$,
\[
|S_I(m,n,q)|\leq
   (n,q)^{\frac{1}{2}}q^{\frac{1}{2}+\frac{1.0661}{\log \log q}}(2+\log q).
\]
\end{lem}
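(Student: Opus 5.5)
We reduce the incomplete sum over $x\in I$ to complete Kloosterman sums and then apply the Weil bound, following the classical completion argument behind \cite[Lemma 1.6]{BoCoZaha2000}. We may assume $I=[u,v]\cap\Z$ with $1\le u\le v\le q$, so $I$ is an interval of at most $q$ consecutive residues mod $q$. Write the indicator of $I$ via additive characters:
\[
\mathbbm{1}_I(x)=\frac1q\sum_{h=0}^{q-1}\sum_{y\in I}e\left(\frac{h(y-x)}{q}\right),\qquad 1\le x\le q .
\]
Substituting this into \eqref{def:SI(m,n,q)} and interchanging the sums gives
\[
S_I(m,n,q)=\frac1q\sum_{h=0}^{q-1}\Bigl(\sum_{y\in I}e\left(\tfrac{hy}{q}\right)\Bigr)K(m-h,n;q),
\]
where $K(a,b;q)=\sum_{x \bmod q,\,(x,q)=1}e((ax+b\bar x)/q)$ is the complete Kloosterman sum.

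The geometric sum is bounded by $\min\bigl(|I|,\,1/(2\|h/q\|)\bigr)$. Summing over $h$ then gives
\[
\frac1q\sum_{h=0}^{q-1}\min\bigl(q,\,\tfrac{1}{2\|h/q\|}\bigr)\le 1+\sum_{1\le h\le q/2}\frac1h\le 2+\log q
\]
after a careful harmonic-sum estimate (the factor $2$ from $\pm h$ cancels the $1/2$). This accounts for the factor $(2+\log q)$ in the lemma.

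It remains to bound $K(m-h,n;q)$ uniformly in $h$. We use the Weil–Estermann bound for composite moduli,
\[
|K(a,b;q)|\le \tau(q)\,(a,b,q)^{1/2}q^{1/2}\le \tau(q)(n,q)^{1/2}q^{1/2},
\]
which holds because $(a,b,q)$ divides $(n,q)$. This bound is classical: one reduces to prime powers by twisted multiplicativity, uses Weil's bound for $p\nmid b$, and evaluates the sums explicitly for higher prime powers. We quote it in exactly this form.

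Finally we need an explicit divisor bound. By Nicolas–Robin, $\log\tau(q)\le 1.5379\log 2\,\frac{\log q}{\log\log q}$ for $q\ge3$, and $1.5379\log 2\approx1.0660<1.0661$. Hence
\[
\tau(q)\le q^{1.0661/\log\log q},
\]
and the hypothesis $q\ge e^3$ guarantees $\log\log q>0$ and places us in the range where this bound is valid. Combining the three estimates yields the stated inequality.

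The main obstacle is the bookkeeping of constants. Three points need care: the completion must give exactly $(2+\log q)$ rather than something like $1+2\log q$; the Kloosterman bound must carry the gcd factor $(n,q)^{1/2}$ and not $(m-h,n,q)^{1/2}$ with a worse constant; and the explicit divisor-function constant must be checked. If the completion step produces a slightly larger factor, we would absorb the loss by symmetrising $h\in(-q/2,q/2]$ and using $\sum_{h\le q/2}1/h\le \log(q/2)+1$.
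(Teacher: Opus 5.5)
Your proposal is correct and follows the paper's proof essentially verbatim. You complete the sum with additive characters (the $h=0$ term contributing $|I|/q\le 1$ and the others at most $1/(2\|h/q\|)$), bound each complete Kloosterman sum by Estermann's $\sigma_0(q)(n,q)^{1/2}q^{1/2}$, sum the resulting harmonic series to get $2+\log q$, and finish with the Nicolas--Robin bound $\sigma_0(q)\le q^{1.0661/\log\log q}$. Your harmonic-sum computation already yields $2+\log q$ (in fact $2+\log(q/2)$), so the fallback in your last paragraph is unnecessary.
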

\begin{proof}
Let $\sigma_0(q)$ denote the number of divisors of $q$. Following the proof of \cite[Lemma 1.6]{BoCoZaha2000}, we have 
\begin{align*}
    |S_{I}(m,n,q)|&\leq \frac{1}{q}\sum^{q-1}_{k=1}\frac{1}{2\|\frac{k}{q}\|}|S(m-k,n,q)|+ \frac{|I|}{q}|S(m,n,q)|\\
    &\leq \sigma_0(q)(n,q)^{\frac{1}{2}}q^{\frac{1}{2}}\left(\frac{1}{2q}\sum^{q-1}_{k=1}\frac{1}{\|\frac{k}{q}\|}+ \frac{|I|}{q}\right)\\
    &\leq \sigma_0(q)(n,q)^{\frac{1}{2}}q^{\frac{1}{2}}(2+\log q),
\end{align*}
where the second inequality follows from the explicit upper bound of $S(m,n,q)$ in \cite[Equa.6]{Estermann}. For \(q\geq e^3\), using the upper bound of $\sigma_0(q)$ in \cite{NicolasRobin}, we obtain
\[
|S_I(m,n,q)|\leq (n,q)^{\frac{1}{2}}q^{\frac{1}{2}+\frac{1.0661}{\log \log q}}(2+\log q).
\]
\end{proof}

\begin{lem}\textbf{(Effective version of Lemma 9 in \cite{BoCoZaha2001})}\label{lem: Lemma 9}
Let $f$ be a $C^1$ function on $\Omega$, where $\Omega\subseteq \R^2$ is still a convex bounded region with rectifiable boundary $\partial\Omega$. 
\begin{align}
S_{f,J}(l,a):=\sum_{\substack{b\in J\\\gcd(a,b)=1}}f(a,b)e\left(\frac{l\bar{b}}{a}\right) \label{def:Sf,J(l,a)}
\end{align}
with \(J\) a bounded interval in $\R$. Then, for \(a\geq e^3\),
\[
|S_{f,J}(l,a)|\leq
2m\|f\|_\infty a^{\frac{1}{2}+\frac{1.0661}{\log \log a}}(2+\log a)\left(\frac{|J|}{a}+1\right)(l,a)^{1/2},
\]
where \(m=m_f\) is an upper bound for the number of intervals of monotonicity of the function~\(J \ni y \mapsto f(a,y)\).
\end{lem}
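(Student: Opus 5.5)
The plan is to reduce $S_{f,J}(l,a)$ to incomplete Kloosterman sums of the type $S_I(m,n,q)$ in \eqref{def:SI(m,n,q)}, and then apply Lemma~\ref{lem: Lemma 4.2} with $q=a$. The weight $f(a,b)$ is removed by summation by parts along $b$, separately on each interval of monotonicity of $y\mapsto f(a,y)$.

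First I would partition $J$ into $m$ subintervals $J_1,\dots,J_m$ on each of which $y\mapsto f(a,y)$ is monotone. On a fixed $J_i=[u,v]$ (endpoint conventions are harmless), define the partial sums
\[
A(y):=\sum_{\substack{u\le b\le y\\ \gcd(a,b)=1}} e\left(\frac{l\bar b}{a}\right).
\]
Abel summation gives
\[
\sum_{b\in J_i,(a,b)=1} f(a,b)e(l\bar b/a)=f(a,v)A(v)-\int_u^v A(y)\,\partial_y f(a,y)\,dy .
\]
Since $f(a,\cdot)$ is monotone on $J_i$, we have $\int_u^v|\partial_y f(a,y)|dy=|f(a,v)-f(a,u)|\le 2\|f\|_\infty$. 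Hence the contribution of $J_i$ is at most $3\|f\|_\infty\sup_y|A(y)|$. To reach the stated constant $2$, I would instead use the sharper estimate $|f(a,v)|+|f(a,v)-f(a,u)|\le 2\|f\|_\infty$. This holds when $f$ has constant sign, which is the case in the application, since $f_{r,\mathbf k}>0$. Alternatively, one can use the second mean value theorem, which gives exactly $2\|f\|_\infty\sup|A|$. Summing over $i$ then produces the factor $2m\|f\|_\infty$.

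Next I would bound $\sup_y|A(y)|$. The interval $[u,y]$ has length at most $|J|$, so it can be covered by at most $|J|/a+1$ blocks, each lying inside a single complete residue range modulo $a$. Translating each block by a multiple of $a$, which leaves $\bar b$ and hence the phase unchanged, moves it to a subinterval of $[1,a]$. Each block sum is then $S_{I'}(0,l,a)$ in the notation of \eqref{def:SI(m,n,q)}, with $m=0$ and $n=l$. Lemma~\ref{lem: Lemma 4.2}, valid for $a\ge e^3$, bounds each block by
\[
(l,a)^{1/2}a^{\frac12+\frac{1.0661}{\log\log a}}(2+\log a).
\]
Multiplying by the number of blocks gives
\[
\sup|A|\le \left(\tfrac{|J|}{a}+1\right)(l,a)^{1/2}a^{\frac12+\frac{1.0661}{\log\log a}}(2+\log a),
\]
and combining this with the previous step yields the claimed inequality.

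The main obstacle is the careful block count. A naive count of complete periods plus the two partial ends gives $|J|/a+2$ rather than $|J|/a+1$. I would try to fix this by merging the two end pieces: their residues lie in a single interval modulo $a$, possibly wrapping around. A wrapped interval is a difference of two intervals in $[1,a]$, so handling it might cost an extra constant factor. If merging fails, I would absorb the loss using the slack in $2+\log a$ coming from the proof of Lemma~\ref{lem: Lemma 4.2}, where the bound $|I|/q\le 1$ was used. I would also verify the constant-sign assumption on $f$ needed for the constant $2$.
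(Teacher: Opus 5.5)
Your route is the paper's. Its proof only cites Lemma~\ref{lem: Lemma 4.2} for subintervals of $[1,a]$ and then says to apply partial summation as in \cite[Lemma 9]{BoCoZaha2001}, so the steps you spell out are exactly what is being invoked, and the paper does not address the block count either.

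The block count is, however, the one step your proposal leaves open, and neither of your repairs works as stated. Merging the end pieces produces a wrapped residue interval, i.e.\ two subintervals of $[1,a]$, so Lemma~\ref{lem: Lemma 4.2} gets applied twice anyway. The loss also cannot be absorbed blockwise, because every block carries its own harmonic term $\frac{1}{2a}\sum_{k=1}^{a-1}\|k/a\|^{-1}\le 1+\log(a/2)$. For example, two one-point blocks straddling a multiple of $a$ already cost about $2\log a$, against a target of about $2+\log a$.

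The missing observation is that no blocks are needed. The completion identity behind Lemma~\ref{lem: Lemma 4.2} holds for any finite set of integers. For an interval $K$, its length enters only through the $k=0$ term, since $|\sum_{x\in K}e(kx/a)|\le 1/(2\|k/a\|)$ for $k\not\equiv 0$ independently of $|K|$. For every subinterval $K\subseteq J_i$ this gives
\[
\Bigl|\sum_{\substack{b\in K\\ (a,b)=1}}e\Bigl(\frac{l\bar b}{a}\Bigr)\Bigr|
\le \sigma_0(a)(l,a)^{1/2}a^{1/2}\Bigl(1+\log\frac a2+\frac{|J|+1}{a}\Bigr)
\le \sigma_0(a)(l,a)^{1/2}a^{1/2}(2+\log a)\Bigl(\frac{|J|}{a}+1\Bigr).
\]
The Nicolas--Robin bound for $\sigma_0(a)$ then gives exactly the factor in the statement. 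Equivalently, the proof of Lemma~\ref{lem: Lemma 4.2} works for any interval containing at most $a+1$ integers, wherever it sits. Your interval $[u,y]$ is covered by $\max(1,\lceil (y-u)/a\rceil)\le |J|/a+1$ intervals of length at most $a$, so your block idea works if the blocks are arbitrary such intervals rather than translates of subintervals of $[1,a]$.

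On the constant $2$, you do not need $f$ to have constant sign. Write Abel summation in Bonnet form: for monotone $(g_k)$, $\sum_k g_kc_k$ equals $g_n$ times a convex combination of terminal partial sums plus $g_1$ times a convex combination of initial partial sums. This yields $(|g_1|+|g_n|)\le 2\|f\|_\infty$ times the supremum over initial and terminal segments of $J_i$. Note that your $A(y)$ covers only initial segments, but the display above bounds terminal segments as well.
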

\begin{proof}
   By Lemma \ref{lem: Lemma 4.2}, for any \(J_0\) subinterval of \([1,a]\) with \(a\geq e^3\), we have 
\begin{align*}
|S_{1,J_0}(l,a)|=|S_{J_0}(0,l,a)|\leq a^{\frac{1}{2}+\frac{1.0661}{\log \log a}}(2+\log a)(l,a)^{1/2}.
\end{align*}
Applying partial summation as in \cite[Lemma 9]{BoCoZaha2001}, we obtain the lemma. 
\end{proof}

\begin{lem}\textbf{(Effective version of Lemma 10 in \cite{BoCoZaha2001})}\label{lem: Lemma 10}
Suppose that \(\Omega\) is a convex subset of the rectangle
\[
[A,A+R]\times[B,B+R]
\]
for some \(A,B\geq e^3\) and \(R\geq 1.\) Then,
\begin{align*}
|S'_I-|I|S'|&\leq \|f\|_{\infty}\frac{(R+1)R}{A}\\
&\quad+4 m\|f\|_{\infty}R(A+R)^{\frac{1}{2}+\frac{2.1322}{\log \log A}}
\Big[2\log (A+R)
+ \log^2 (A+R)
\Big],
\end{align*}
where $S'$ and $S_I'$ are defined in \eqref{def:S'I} and \eqref{def:S'} respectively, and \(m=m_f\) is an upper bound for the number of intervals of monotonicity of the function~\(J \ni y \mapsto f(x,y)\).
\end{lem}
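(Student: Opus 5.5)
We prove Lemma~\ref{lem: Lemma 10} by detecting the condition $\overline{b}\in I_a$ with a finite Fourier expansion in the residue $\overline{b}\bmod a$, as in the proof of \cite[Lemma 10]{BoCoZaha2001}. We fix $a$ and sum over $b$ in the fibre $J_a:=\{y:(a,y)\in\Omega\}$, which is an interval of length at most $R$ because $\Omega$ is convex and contained in $[A,A+R]\times[B,B+R]$. For each $a\in[A,A+R]$ we write
\[
\mathbbm{1}_{I_a}(\overline{b})=\sum_{x\in I_a\cap\Z}\frac{1}{a}\sum_{l=0}^{a-1}e\Big(\frac{l(\overline{b}-x)}{a}\Big).
\]
The term $l=0$ gives $\#(I_a\cap\Z)/a$. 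Since $|I_a|=a|I|$, this equals $|I|+\theta/a$ with $|\theta|\le 1$.

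\textbf{Main term.} Summing the $l=0$ contribution against $f(a,b)$ over $(a,b)\in\Omega$ with $\gcd(a,b)=1$ gives $|I|S'$ plus an error of at most $\sum_{a}\frac{1}{a}\sum_{b\in J_a}|f(a,b)|$. There are at most $R+1$ values of $a$, each with $a\ge A$, and at most about $R$ values of $b$, so this error is at most $\|f\|_\infty (R+1)R/A$. This is exactly the first term of the claimed bound.

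\textbf{Terms $l\neq 0$.} For these we interchange the sums to get
\[
\sum_a\frac{1}{a}\sum_{l=1}^{a-1}\Big(\sum_{x\in I_a}e(-lx/a)\Big)\,S_{f,J_a}(l,a),
\]
with $S_{f,J}(l,a)$ as in \eqref{def:Sf,J(l,a)}. The geometric sum in $x$ is bounded by $\min\big(a,\,1/(2\|l/a\|)\big)$. Lemma~\ref{lem: Lemma 9} applies since $a\ge A\ge e^3$, and because $|J_a|\le R$ it gives
\[
|S_{f,J_a}(l,a)|\le 2m\|f\|_\infty\, a^{\frac12+\frac{1.0661}{\log\log a}}(2+\log a)\Big(\frac{R}{a}+1\Big)(l,a)^{1/2}.
\]

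\textbf{The main obstacle} is summing this over $l$ while keeping control of the factor $(l,a)^{1/2}$ and the harmonic weights $1/\|l/a\|$. We write $d=(l,a)$ and bound the weighted sum by
\[
\frac{1}{a}\sum_{l=1}^{a-1}\frac{(l,a)^{1/2}}{2\|l/a\|}\;\le\;\sum_{d\mid a}d^{1/2}\cdot\frac{1}{d}\sum_{1\le l'<a/d}\frac{1}{2\|l'd/a\|}\cdot\frac{d}{a}.
\]
This is at most $\sigma_0(a)\,(1+\log a)$, or at most $\sigma_0(a)\sqrt{a}\,(1+\log a)$ if we use the crude bound $d^{1/2}\le a^{1/2}$. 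We then absorb $\sigma_0(a)$ using the Nicolas--Robin bound $\sigma_0(a)\le a^{1.0661/\log\log a}$. This is what doubles the exponent constant to $2.1322$ and produces a $\log^2$ factor from $(2+\log a)(1+\log a)$. For this we need $A\ge e^3$ so that $\log\log a\ge\log\log A$ and the bound is monotone; we then replace $a$ by $A+R$ in the power, by $A$ in the $\log\log$, and use $R/a+1\le 2$ when $R\le A$, or more generally carry $(A+R)/a$. Finally we sum over the at most $R+1$ values of $a$, keep the factor $2m$, and tidy up $(2+\log)(1+\log)\le 2\cdot\big(2\log(A+R)+\log^2(A+R)\big)$ for $A+R\ge e^3$. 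This gives the second term $4m\|f\|_\infty R(A+R)^{1/2+2.1322/\log\log A}\big[2\log(A+R)+\log^2(A+R)\big]$.

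The delicate point is the bookkeeping of the numerical constants, in particular $4m$ and whether $R+1$ can be replaced by $R$. If the constants do not close, we would instead first bound $(l,a)^{1/2}$ by $d^{1/2}$ and use the divisor-sum identity $\sum_{d\mid a}d^{-1/2}\le\sigma_0(a)$.
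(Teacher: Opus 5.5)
Your outline is the paper's proof step for step: Fourier detection of $\overline{b}\in I_a$, the $l=0$ term giving $|I|S'$ with error $\|f\|_\infty(R+1)R/A$, the bound $1/(2\|l/a\|)$ on the geometric sum, Lemma~\ref{lem: Lemma 9}, grouping by $d=(l,a)$, and Nicolas--Robin to double the exponent. Your two-sided count $\#(I_a\cap\Z)=a|I|+\theta$ is actually cleaner than the paper's one-sided inequality.

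The gap is the final bookkeeping. You flag it yourself, but you leave it open, and as written it does not give the stated constant. Your factors are $2$ (from $R/a+1\le 2$), $2m$ (from Lemma~\ref{lem: Lemma 9}) and $2$ (from $(2+\log a)(1+\log a)\le 2(2\log a+\log^2 a)$). Together they give $8m$, not $4m$.

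The missing ingredient is the sharper $l$-sum estimate the paper uses:
\[
\frac{1}{a}\sum_{l=1}^{a-1}\frac{(l,a)^{1/2}}{2\|l/a\|}\le \sum_{l=1}^{a-1}\frac{(l,a)^{1/2}}{l}\le \sigma_0(a)\log a\qquad (a\ge e^3).
\]
It holds for three reasons. The divisor $d=a$ never occurs as $(l,a)$. The $d=1$ term is at most $1+\log a$. Each term $d^{-1/2}(1+\log(a/d))$ with $1<d<a$ is at most $\log a$. So the total is at most $1+(\sigma_0(a)-1)\log a\le\sigma_0(a)\log a$.

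With this bound, $(2+\log a)\log a$ is exactly the bracket. The constant $4m$ then arises as $m$ (the $\tfrac12$ of the geometric-sum bound times $2m$) times $2$ (symmetrizing $1/\min(l,a-l)$ into $1/l$) times $2$ (the factor $|J_a|/a+1$). Your aside $\sigma_0(a)\sqrt a(1+\log a)$ must not be used, since it raises the exponent by $1/2$. Your closing fallback is the estimate you already used, so it does not recover the lost factor $2$.

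The second issue is the factor $|J_a|/a+1$ itself. You correctly take $J_a$ to be the whole fibre of $\Omega$ over $a$, of length up to $R$. You can therefore only close when $R\le A$, and ``carry $(A+R)/a$'' does not lead to the stated form. Yet $R>A$ is exactly the regime of the application, with $A=Q/T$ and $R=(1-1/T)Q$.

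The paper gets the factor $2$ by letting $b$ run over $I'_a=I_a\cap J_a$, of length at most $a|I|\le a$. However, it is $\overline{b}$, not $b$, that is constrained to $I_a$. The $b$-sum in $S_2$ therefore genuinely runs over all of $J_a$, and your reading is the correct one. What legitimately gives $|J_a|\le a$ in the lemma's only use (the estimate of $T_1(Q,I)$, with $\Omega\subseteq Q\mathscr{T}$) is that the fibre over $a$ lies in $(Q-a,Q]$.

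So to obtain the constant $4m$, you should add and use the hypothesis that every fibre $\{b:(a,b)\in\Omega\}$ has length at most $a$. For general $R>A$, carrying $R/a+1$ and summing honestly via $R\sum_{a\le A+R}a^{-1/2}\le 2R(A+R)^{1/2}$ gives a bound of the same shape but with a larger numerical constant. Strictly, there are also up to $R+1$ values of $a$, not $R$; the paper is equally loose there.
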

\begin{proof}
Observe that
\[
S'_I=\sum_{\substack{(a,b)\in\Omega\cap\Z^2\\\gcd(a,b)=1}}f(a,b)\sum_{x\in I_a}\frac{1}{a}\sum^a_{l=1}e\left(\frac{l(\bar{b}-x)}{a}\right):= S_1+S_2,
\]
where \(S_1\) is the sum of terms with \(l=a\) and \(S_2\) is the sum of the remaining terms. 

\mbox{Since \(|I_a|=|I|a\), }we then have
\[
S_1=\sum_{\substack{(a,b)\in\Omega\cap\Z^2\\ \gcd(a,b)=1}}f(a,b)\sum_{x\in I_a}\frac{1}{a}\leq \sum_{\substack{(a,b)\in\Omega\cap\Z^2\\ \gcd(a,b)=1}}f(a,b)\frac{1}{a}(|I_a|+1)=|I|S'+ \sum_{\substack{(a,b)\in\Omega\cap\Z^2\\ \gcd(a,b)=1}}f(a,b)\frac{1}{a}.
\]
The second sum on the right side can be bounded by 
\begin{align*}
\sum_{\substack{(a,b)\in\Omega\cap\Z^2\\ \gcd(a,b)=1}}f(a,b)\frac{1}{a} 
&\leq  \|f\|_{\infty}\sum_{\substack{A\leq a\leq A+ R_1\\B\leq b\leq B+R_2}}\frac{1}{a}\#\{b:(a,b)\in\Omega\cap\Z^2,\gcd(a,b)=1\}\\
&\leq \|f\|_{\infty}\sum_{A\leq a\leq A+R}\frac{1}{a}R\leq \|f\|_{\infty}\frac{(R+1)R}{A}.
\end{align*}

Now, it remains to upper bound \(S_2\). By definition,
\[
S_2=\sum_{\substack{(a,b)\in\Omega\cap\Z^2\\\gcd(a,b)=1}}f(a,b)\sum_{x\in I_a}\frac{1}{a}\sum^{a-1}_{l=1}e\left(\frac{l(\bar{b}-x)}{a}\right)=\sum_{a\in \text{pr}_1(\Omega)}\frac{1}{a}\sum^{a-1}_{l=1}\left(\sum_{x\in I_a} e\left(-\frac{lx}{a}\right)\right)S_{f,I'_a}(l,a),
\]
where $\text{pr}_1(\Omega)$ is the projection of $\Omega$ onto the first coordinate and $I_a' =I_a\cap \{b\in\R:(a,b)\in\Omega\}$.
Observe that the sum over $x$ in $S_2$ is a geometric sum, and thus we have
\begin{align*}
|S_2|
&\leq \sum_{A\leq a\leq A+ R}\frac{1}{a}\sum_{l=1}^{a-1}\frac{a}{2\min(l,a-l)}
\cdot |S_{f,I_a'}(l,a)|\\
&\leq \sum_{A\leq a\leq A+R}\frac{1}{a}\sum_{l=1}^{a-1}
\frac{a}{\min(l,a-l)}\cdot
m\|f\|_{\infty}a^{\frac{1}{2}+\frac{1.0661}{\log \log a}}(2+\log a)\left(\frac{|I_a'|}{a}+1\right)(l,a)^{1/2}\\
&\leq 4m\|f\|_{\infty}\sum_{A\leq a\leq A+ R}a^{\frac{1}{2}+\frac{1.0661}{\log \log a}}(2+\log a)\sum_{l=1}^{a-1}\frac{(l,a)^{1/2}}{l},
\end{align*}
where the second inequality follows from Lemma \ref{lem: Lemma 9} and the third follows from the fact \mbox{that $(l,a) = (a-l,a)$.} We can further simplify the innermost sum as 
\begin{align*}
    \sum^{a-1}_{l=1}\frac{(l,a)^{1/2}}{l}\leq \sum_{d|a}d^{-1/2}\log a
    \leq \sigma_0(a)\log a\leq a^{\frac{1.0661}{\log\log a}}\log a.
\end{align*}

Plugging this back into \(|S_2|\), we have 
\begin{align*}
|S_2|
&\leq 4m\|f\|_{\infty}\sum_{A\leq a\leq A+ R}
a^{\frac{1}{2}+\frac{2.1322}{\log \log a}}(2\log a+\log^2 a)\\
&\leq 4 m\|f\|_{\infty}R(A+R)^{\frac{1}{2}+\frac{2.1322}{\log \log A}}
\Big[2\log (A+R)
+ \log^2 (A+R)
\Big].
\end{align*}
Therefore, combining both parts, we have 
\begin{align*}
|S'_I-|I|S'|&\leq \|f\|_{\infty}\frac{(R+1)R}{A}\\
&\quad+4 m\|f\|_{\infty}R(A+R)^{\frac{1}{2}+\frac{2.1322}{\log \log A}}
\Big[2\log (A+R)
+ \log^2 (A+R)
\Big].
\end{align*}
\end{proof}

\subsection{Explicit asymptotic formula for \(S_0(Q,I)\)}
Recall in \eqref{def:S0(Q,I)} the definition of $S_0(Q,I)$:
\[
S_0(Q,I)= \sum_{\gamma_j\in F_I(Q)}(\gamma_{j+1}-\gamma_j)^2= \sum_{\gamma_j\in F_I(Q)}\frac{1}{q^2_jq^2_{j+1}}.
\]

Denote \(T=Q^c\) for small \(c\in (0,1)\). In \cite{BoCoZaha2001}, the constant $c$ is optimized to be $1/10$. We decompose \(S_0(Q,I)\) as the sum of \(T_1(Q,I)+  T_2(Q,I) +T_3(Q,I)\), where 
\begin{align*}
    T_1(Q,I)&=\sum_{\substack{\gamma_j\in F_I(Q)\\q_j,q_{j+1}\geq Q/T}}\frac{1}{q^2_jq^2_{j+1}},\\
    T_2(Q,I)&=\sum_{\substack{\gamma_j\in F_I(Q)\\q_j<Q/T}}\frac{1}{q^2_jq^2_{j+1}},\\
    T_3(Q,I)&=\sum_{\substack{\gamma_j\in F_I(Q)\\q_{j+1}<Q/T}}\frac{1}{q^2_jq^2_{j+1}}.
\end{align*}

Now we introduce the defect constant $c_I$ that depends on the endpoints of the \mbox{interval $I$.} Denote 
\[
c_I=\sum_{q\geq 1}\frac{\#\{a\in qI;(a,q)=1\}-|I|\varphi(q)}{q^2}.
\]
Notice that 
\begin{align*}
\left|\#\{a\in qI; (a,q)=1\}-|I|\varphi(q)\right|&= \Big|\sum_{a\in qI}\sum_{\substack{d|q\\d|a}}\mu(d)-|I|\varphi(q)\Big|\\
&=\Big|\sum_{d|q}\mu(d)\cdot\#\{a\in qI: d|a\}-|I|\varphi(q)\Big|\\
&\leq |\sum_{d|q}\mu(d)|\leq \sigma_0(q),
\end{align*}
where the first inequality in the previous line follows from $\sum_{d|q}\mu(d)/d = \varphi(q)/q$. 
Using the upper bound of $\sigma_0(q)$ in \cite{NicolasRobin}, we can bound  $c_I$ by
\begin{align}
|c_I|\leq\sum_{q\geq 1}\frac{|\#\{a\in qI;(a,q)=1\}-|I|\varphi(q)|}{q^2}\leq \sum_{q\geq 1}\frac{\sigma_0(q)}{q^2}\leq \zeta^2(2)=\frac{\pi^4}{36}.\label{eq:bound for cI}
\end{align}

Now we evaluate the three components of $S_0(Q,I)$. For \(T_1(Q,I)\), apply Lemma~\ref{lem: Lemma 10} with 
\begin{align*}
A=B=\frac{Q}{T}, \qquad &R=\left(1-\frac1T\right)Q,\qquad \Omega = \{(x,y)\in Q\mathscr{T}:\min(x,y)\geq Q/T\},\\
&f(a,b)=\frac1{a^2b^2}, \qquad \text{and\quad}m=1.
\end{align*}
Then, we obtain $
\|f\|_{\infty}\leq T^4/Q^4$ and so
\begin{align}
    T_1(Q,I)-|I|T_1(Q) = R_{I,1},\label{eq:difference from I*T1(Q)}
\end{align}
where \(T_1(Q) := T_1(Q,(0,1])\) and
\begin{align*}
|R_{I,1}|
&\leq \|f\|_{\infty}\frac{(R+1)R}{A}+4m\|f\|_{\infty}R(A+R)^{\frac12+\frac{2.1322}{\log\log A}}
\Big[2\log(A+R)+\log^2(A+R)\Big]\notag\\
&\leq \frac{T^5}{Q^3}\left(1-\frac1T\right)^2
+\frac{T^5}{Q^4}\left(1-\frac1T\right)+4(T^4-T^3)Q^{-\frac{5}{2}+\frac{2.1322}{\log\log Q/T}}
(2\log Q+\log^2 Q).
\end{align*}

We now turn to $T_2(Q,I)$. Following \cite[Eq. (81)]{BoCoZaha2001}, we obtain
\begin{align*}
T_2(Q,I)&\leq \left(1-\frac{1}{T}\right)^{-2}Q^{-2}\sum_{q\leq Q/T}\frac{\#\{a\in qI;(a,q)=1\}}{q^2}\\
&= \left(\frac{1}{Q^2}+\frac{2T-1}{(T-1)^2Q^2}\right)\sum_{q\leq Q/T}\frac{\#\{a\in qI;(a,q)=1\}}{q^2}.
\end{align*}
Moreover, trivially we have 
\[
T_2(Q,I)\geq \frac{1}{Q^2}\sum_{q\leq Q/T}\frac{\#\{a\in qI;(a,q)=1\}}{q^2}.
\]
Hence, combining both upper and lower bounds of \(T_2(Q,I)\), we have 
\begin{align}
\Big|T_2(Q,I)-\frac{1}{Q^2}\sum_{q\leq Q/T}\frac{\#\{a\in qI;(a,q)=1\}}{q^2}\Big|&\leq \frac{(2T-1)}{(T-1)^2Q^2}\sum_{q\leq Q/T}\frac{\#\{a\in qI;(a,q)=1\}}{q^2}\notag\\
&\leq \frac{(2T-1)}{(T-1)^2Q^2}\sum_{q\leq Q/T}\frac{\varphi(q)}{q^2}\notag\\
&\leq \frac{(2T-1)(\log (Q/T) + 1)}{(T-1)^2Q^2}.\label{eq:error term from T2(Q,I)}
\end{align}

Finally, we estimate \(T_3(Q,I)\). Following the arguments in \cite[p. 230]{BoCoZaha2001}, we have 
\begin{align*}
T_3(Q,I)&\leq \left(1-\frac{1}{T}\right)^{-2}Q^{-2}\sum_{q\leq Q/T}\frac{\#\{a\in qI';(a,q)=1\}}{q^2}
\shortintertext{and} T_3(Q,I)&\geq\frac{1}{Q^2}\sum_{q\leq Q/T}\frac{\#\{a\in qI'';(a,q)=1\}}{q^2},
\end{align*}
where 
\[
I'=\left(\alpha,\beta+\dfrac{2}{Q}\right]\quad\text{ and }\quad I''=\left(\alpha+\frac{2}{Q},\beta\right] .
\]
Notice that the interval \(qI'\setminus qI= (q\beta, q\beta +(2q/Q)]\) contains at most one integer for \(Q>2^{1/c}\) \mbox{and \(q\leq Q/T=Q^{1-c}\).} Similar arguments apply to \(qI\setminus qI''\). Thus, for \(Q>2^{1/c}\), we obtain
\begin{align*}
T_3(Q,I) &\leq \left(1-\frac{1}{T}\right)^{-2}Q^{-2}\sum_{q\leq Q/T}\frac{\#\{a\in qI;(a,q)=1\}}{q^2} + \left(1-\frac{1}{T}\right)^{-2}Q^{-2}\sum_{q\leq Q/T}\frac{1}{q^2}\\
&\leq  \left(1-\frac{1}{T}\right)^{-2}Q^{-2}\sum_{q\leq Q/T}\frac{\#\{a\in qI;(a,q)=1\}}{q^2}+ \frac{\pi^2}{6}\left(1-\frac{1}{T}\right)^{-2}Q^{-2},
\end{align*}
and 
\begin{align*}
    T_3(Q,I)&\geq  \frac{1}{Q^2}\sum_{q\leq Q/T}\frac{\#\{a\in qI;(a,q)=1\}}{q^2}- \frac{1}{Q^2}\sum_{q\leq Q/T}\frac{1}{q^2} \\
&\geq \frac{1}{Q^2}\sum_{q\leq Q/T}\frac{\#\{a\in qI;(a,q)=1\}}{q^2}-\frac{\pi^2}{6Q^2}.
\end{align*}
Hence, together with \eqref{eq:error term from T2(Q,I)}, we obtain
\[
\Big|T_3(Q,I)- \frac{1}{Q^2}\sum_{q\leq Q/T}\frac{\#\{a\in qI;(a,q)=1\}}{q^2}\Big|\leq \frac{(2T-1)(\log (Q/T) + 1)}{(T-1)^2Q^2}+ \frac{\pi^2T^2}{6(T-1)^2Q^2}.
\]

Thus, combining the estimations for \(T_2(Q,I)\) and \(T_3(Q,I)\), we have 
\begin{align}
    T_2(Q,I)+T_3(Q,I)= \frac{2}{Q^2}\sum_{q\leq Q/T}\frac{\#\{a\in qI;(a,q)=1\}}{q^2}+ R_{I,2},\label{eq:T2+T3+error term}
\end{align}
where
\[
|R_{I,2}|\leq  \frac{(4T-2)(\log (Q/T) + 1)}{(T-1)^2Q^2}+ \frac{\pi^2T^2}{6(T-1)^2Q^2}.
\]

Define $T_2(Q) := T_2(Q,(0,1])$ and $T_3(Q) := T_3(Q,(0,1])$. Using \eqref{eq:T2+T3+error term} with $I=(0,1]$, we have
\begin{align}
T_2(Q)+T_3(Q)= \frac{2}{Q^2}\sum_{q\leq Q/T}\frac{\varphi(q)}{q^2} +  R_{I,3},\label{eq:T2+T3}
\end{align}
where
\[
|R_{I,3}|\leq \frac{(4T-2)(\log (Q/T) + 1)}{(T-1)^2Q^2}.
\]
Notice that the $\pi^2/6$-term is not included in $R_{I,3}$, because when $I$ is the full interval, $qI'\setminus qI$ and $qI\setminus qI''$ don't contain any integer for $Q>2^{1/c}$. Recall that $T=Q^c$.
 Substituting $c=1/10$ and combining all restrictions for $Q$, \eqref{eq:bound for cI}, \eqref{eq:difference from I*T1(Q)}, \eqref{eq:T2+T3+error term}, and \eqref{eq:T2+T3}, we obtain that for \(Q> 1024\),
\begin{align}
S_0(Q,I)&=T_1(Q,I)+T_2(Q,I)+T_3(Q,I)\notag\\
&= |I|S_0(Q) + \frac{2}{Q^2}\sum_{q\leq Q/T}\frac{\#\{a\in qI;(a,q)=1\}-|I|\varphi(q)}{q^2} +R_{I,1}+R_{I,2}+|I|R_{I,3}\notag\\
&=|I| S_0(Q) + R_{I,4},\label{eq:S0(Q,I) final asymptotic}
\end{align}
where
\begin{align*}
    |R_{I,4}|&\leq  \frac{\pi^4}{18Q^2}+Q^{-5/2}\left(1-Q^{-1/10}\right)^2\\
    &\quad+4Q^{-21/10+2.1322/(\log\log Q-0.1054)}(1+Q^{-1/10})
(2\log Q+\log^2Q)\\
&\quad+\frac{(|I|+1)(4Q^{1/10}-2)((9\log Q)/10+1)}
{(Q^{1/10}-1)^2Q^2}+\frac{\pi^2Q^{-9/5}}{6(Q^{1/10}-1)^2}.
\end{align*}
Note that the constant $-0.1054$ comes from \mbox{bounding $\log\log (Q/T)\geq \log\log Q-0.1054$.}

\subsection{Proof of Theorem \ref{thm: asymptotic of S0(Q,I) with error order -2}}
 Substituting the explicit bound of \(S_0(Q)\) from \cite{ViAnji} into \eqref{eq:S0(Q,I) final asymptotic}, we arrive at the desired result.
 
 \begin{rem}
     In \cite{BoCoZaha2001}, the analogue of our Theorem \ref{thm: asymptotic of S0(Q,I) with error order -2} is Theorem 2. Their Theorem 2 includes terms of order $Q^{-2}$ in the main term rather than in the error term. This is not achievable in our explicit setting. Indeed, when estimating $T_2(Q,I)$ and $T_3(Q,I)$, we don't have an explicit upper bound on $Q$ which guarantees that the interval 
     \[
  qI'\setminus qI= \left(q\beta, q\beta +\dfrac{2q}{Q}\right]
     \]   
      contains no integer. This is precisely where the error term of order $Q^{-2}$ arises. Moreover, since the contribution from terms involving the defect $c_I$ is absorbed in an error term of order $Q^{-2}$, the restriction that the endpoints of the subinterval $I$ be rational can be removed.
 \end{rem}
\section*{Funding}
A.D. is supported by the Shaff--Andrews Fellowship, Department of Mathematics, University of Illinois Urbana-Champaign.

\end{document}